\newif\ificml
\icmlfalse

\ificml
\documentclass{article}
\usepackage{icml2021}
\else
\documentclass[11pt,letter]{article} 
\usepackage[papersize={8.5in,11in},margin=0.7in]{geometry}
\fi

\usepackage{mathrsfs}
\usepackage{amsmath,amsfonts,amsthm}
\usepackage{bm}

\usepackage[colorinlistoftodos]{todonotes}
\usepackage{thmtools}
\usepackage{thm-restate}
\usepackage{dsfont}

\ificml
\usepackage{balance}
\usepackage{threeparttable}
\usepackage{xcolor}
\definecolor{myblue}{rgb}{0,0.08,0.45}
\usepackage[colorlinks,urlcolor=myblue,citecolor=myblue,linkcolor=myblue]{hyperref}

\else 
\usepackage{threeparttable}
\usepackage[format=hang,font={small}]{caption}
\usepackage{algorithm,algorithmic}
\usepackage[algo2e,ruled,vlined]{algorithm2e}
\usepackage[pdftex,bookmarksnumbered,bookmarksopen,
colorlinks,citecolor=blue,linkcolor=blue,urlcolor=blue]{hyperref}
\usepackage{natbib}
\setcitestyle{sort,comma,authoryear,round}
\fi

 \usepackage{color}

\usepackage{psfrag}
\usepackage{url}            %
\usepackage{booktabs}       %
\usepackage{amsfonts}       %
\usepackage{nicefrac}       %
\usepackage{microtype}      %

\usepackage{amssymb}
\usepackage{graphicx}
\usepackage{setspace}
\usepackage{subfigure}
\usepackage{enumitem}

\usepackage{multicol}
\usepackage{multirow}

\newtheorem{lemma}{Lemma}

\newtheorem{remark}{Remark}
\newtheorem{proposition}{Proposition}
\newtheorem{assumption}{Assumption}

\newtheorem{definition}{Definition}

\def\1{\bm{1}}

\def\vzero{{\bm{0}}}

\def\vb{{\bm{b}}}

\def\ve{{\bm{e}}}

\def\vg{{\bm{g}}}

\def\vp{{\bm{p}}}
\def\vq{{\bm{q}}}
\def\vr{{\bm{r}}}

\def\vu{{\bm{u}}}
\def\vv{{\bm{v}}}

\def\vx{{\bm{x}}}
\def\vy{{\bm{y}}}
\def\vz{{\bm{z}}}

\def\vxh{{\hat{\bm{x}}}}
\def\vxb{{\bar{\bm{x}}}}

\def\vxt{{\Tilde{\bm{x}}}}
\def\vyt{{\Tilde{\bm{y}}}}

\def\mB{{\bm{B}}}

\DeclareMathAlphabet{\mathsfit}{\encodingdefault}{\sfdefault}{m}{sl}
\SetMathAlphabet{\mathsfit}{bold}{\encodingdefault}{\sfdefault}{bx}{n}

\def\gB{{\mathcal{B}}}

\def\gF{{\mathcal{F}}}

\def\gX{{\mathcal{X}}}
\def\gY{{\mathcal{Y}}}

\def\sR{{\mathbb{R}}}

\newcommand{\E}{\mathbb{E}}

\newcommand{\R}{\mathbb{R}}

\newcommand{\Gap}{\mathrm{Gap}}

\DeclareMathOperator*{\argmax}{arg\,max}
\DeclareMathOperator*{\argmin}{arg\,min}

\newcommand{\vrpda}{\textsc{vrpda}$^2$}
\newcommand{\pda}{\textsc{pda}$^2$}
\newcommand{\pdhg}{\textsc{pdhg}}
\newcommand{\spdhg}{\textsc{spdhg}}
\newcommand{\pure}{\textsc{pure\_cd}}
\newcommand{\vrada}{\textsc{vrada}}
\newcommand{\prox}{\text{prox}}

\newcommand{\innp}[1]{\left\langle #1 \right\rangle}

\ificml
\icmltitlerunning{Variance Reduction via Primal-Dual Accelerated Dual Averaging}
\else
\title{
Variance Reduction via Primal-Dual Accelerated Dual Averaging\\ for Nonsmooth Convex Finite-Sums\thanks{CS acknowledges support from the NSF award 2023239. JD acknowledges support from the Office of the Vice Chancellor for Research and Graduate Education at the University of Wisconsin–Madison with funding from the Wisconsin Alumni Research Foundation. SW acknowledges support from NSF Awards 1740707, 1839338, 1934612, and 2023239; Subcontract 8F-30039 from Argonne National Laboratory; and Award N660011824020 from the DARPA Lagrange Program.  }
}

\author{
  Chaobing Song, Stephen J. Wright and Jelena Diakonikolas\\
  Department of Computer Sciences\\
  University of Wisconsin-Madison\\
  \texttt{chaobing.song@wisc.edu, swright@cs.wisc.edu,  jelena@cs.wisc.edu}
}
\fi

\begin{document}
\ificml
\twocolumn[
\icmltitle{Variance Reduction via Primal-Dual Accelerated Dual Averaging\\ for Nonsmooth Convex Finite-Sums}

\icmlsetsymbol{equal}{*}

\begin{icmlauthorlist}
\icmlauthor{Chaobing Song}{uwm}
\icmlauthor{Stephen Wright}{uwm}
\icmlauthor{Jelena Diakonikolas}{uwm}
\end{icmlauthorlist}

\icmlaffiliation{uwm}{Department of Computer Sciences, University of Wisconsin-Madison, Madison, WI}

\icmlcorrespondingauthor{Chaobing Song}{chaobing.song@wisc.edu}

\icmlkeywords{variance reduction, dual averaging, primal-dual method, nonsmooth optimization}

\vskip 0.3in
]

\printAffiliationsAndNotice{\icmlEqualContribution} %
\else
\maketitle
\fi
\begin{abstract}
We study structured nonsmooth convex finite-sum optimization that appears widely in machine learning applications, including support vector machines and least absolute deviation. For the primal-dual formulation of this problem, we propose a novel algorithm called \emph{Variance Reduction via Primal-Dual Accelerated Dual Averaging (\vrpda)}. In the nonsmooth and general convex setting, \vrpda~has the overall complexity $O(nd\log\min \{1/\epsilon, n\} + d/\epsilon )$ in terms of the primal-dual gap, where $n$ denotes the number of samples, $d$ the dimension of the primal variables, and $\epsilon$ the desired accuracy. In the nonsmooth and strongly convex setting, the overall complexity of \vrpda~becomes $O(nd\log\min\{1/\epsilon, n\} + d/\sqrt{\epsilon})$ in terms of both the primal-dual gap and the distance between iterate and optimal solution. Both these results for \vrpda~improve significantly on state-of-the-art complexity estimates, which are $O(nd\log \min\{1/\epsilon, n\} + \sqrt{n}d/\epsilon)$ for the  nonsmooth and general convex  setting and $O(nd\log \min\{1/\epsilon, n\} + \sqrt{n}d/\sqrt{\epsilon})$ for the nonsmooth and strongly convex setting, in a much more simple and straightforward way. Moreover, both complexities are better than \emph{lower} bounds for general convex finite sums that lack the particular (common) structure that we consider. Our theoretical results are supported by numerical experiments, which confirm the competitive performance of \vrpda~compared to state-of-the-art.
\end{abstract}

\section{Introduction}

We consider large-scale regularized nonsmooth convex empirical risk minimization (ERM) of linear predictors in machine learning. Let $\vb_i\in\sR^d$, $i=1,2,\dotsc,n,$ be sample vectors with $n$ typically large; $g_i:\sR\rightarrow \sR$, $i=1,2,\dotsc,n,$ be possibly \emph{nonsmooth} convex loss functions associated with the linear predictor $\langle \vb_i, \vx\rangle$; and $\ell:\sR^d\rightarrow \sR$ be an extended-real-valued, $\sigma$-strongly convex ($\sigma\ge 0$) and  possibly nonsmooth regularizer that admits an efficiently computable proximal operator. 
The problem we study is 
\begin{equation}\tag{P}
\min_{\vx\in\sR^d} f(\vx) := g(\vx) + \ell(\vx) :=  \frac{1}{n}\sum_{i=1}^n g_i(\vb_i^T \vx) + \ell(\vx), \label{eq:prob}
\end{equation}
where $g(\vx):=\frac{1}{n}\sum_{i=1}^n  g_i(\vb_i^T \vx)$.  
Instances of the nonsmooth ERM problem \eqref{eq:prob} include $\ell_1$-norm and $\ell_2$-norm regularized support vector machines (SVM) and least absolute deviation. 
For practicality of our approach, we require in addition that the convex conjugates of the functions $g_i$, defined by $g_i^*(y_i) := \max_{z_i} (z_iy_i-g_i(z_i))$, admit efficiently computable proximal operators. (Such is true of the examples mentioned above.)
From the statistical perspective,  nonsmoothness in the loss function is essential for obtaining a model that is both tractable and robust. 
But from the optimization viewpoint, nonsmooth optimization problems are intrinsically more difficult to solve. On one hand, the lack of smoothness in $g$ precludes the use of black-box first-order information to obtain efficient methods. 
On the other hand, the use of structured composite optimization methods that rely on the proximal operator of $g$ is out of question here too,  because the proximal operator of the sum $\frac{1}{n}\sum_{i=1}^n g_i({\vb_i}^T \vx)$ may not be efficiently computable w.r.t.~$\vx$, even when the proximal operators of the individual functions $g_i(\cdot)$ are.

Driven by applications in machine learning, computational statistics, signal processing, and operations research, the nonsmooth problem \eqref{eq:prob} and its variants have been studied for more than two decades. 
There have been two main lines of work: deterministic algorithms that exploit the  underlying simple primal-dual structure to improve efficiency (i.e.,  dependence on the accuracy parameter $\epsilon$) and randomized algorithms that exploit the finite-sum structure to improve scalability (i.e., dependence on the number of samples $n$).    

\paragraph{Exploiting the primal-dual structure.}
A na\"{i}ve approach for solving \eqref{eq:prob} would be subgradient descent, which requires access to subgradients of $g(\vx)$ and  $\ell(\vx)$. 
To find a solution $\vx$ with  $f(\vx) - f(\vx^*)\le \epsilon$, where $\vx^*$ is an optimal solution of \eqref{eq:prob} and $\epsilon>0$ is the desired accuracy, the subgradient method requires $O(1/\epsilon^2)$ iterations for the nonsmooth convex setting. 
This complexity is high, but it is also the best possible  if we are only allowed to access  ``black-box'' information of function value and subgradient. 
To obtain improved complexity bounds, we must consider  approaches that exploit structure in \eqref{eq:prob}. 
To begin, we note that \eqref{eq:prob} admits an explicit and %
simple primal-dual %
reformulation: 
\begin{equation}\tag{PD}
\begin{gathered}\label{eq:primal-dual-deter}
\min_{\vx\in\sR^d}\big\{ f(\vx) = \max_{\vy\in\sR^n} L(\vx, \vy)  \big\},   \\
L(\vx,\vy) :=  \langle \mB\vx, \vy\rangle - g^*(\vy) + \ell(\vx),
\end{gathered}
\end{equation}
where $\mB = \frac{1}{n}[\vb_1, \vb_2, \ldots, \vb_n]^T$, $g^*(\vy) = \frac{1}{n}\sum_{i=1}^n g_i^* (y_i)$, and with the convex conjugate functions $g_i^*(\cdot)$ satisfying $  g_i(\vb_i^T\vx) = \max_{y_i}\{ y_i \langle \vb_i, \vx\rangle - g_i^*(y_i)\}.$ 
The nonsmooth loss %
$\vg(\vx)$ in \eqref{eq:prob} is thereby decoupled into a bilinear term $\langle\mB\vx, \vy\rangle$ and a separable function $g^*(\vy)$ that admits an efficiently computable proximal operator. 
Due to the possible nonsmoothness of $g(\vx),$  we can assume only that $L(\vx, \vy)$ is concave \emph{w.r.t.}~$\vy$---but not strongly concave. 
Therefore, Problem \eqref{eq:primal-dual-deter} is $\sigma$-strongly convex-(general) concave ($\sigma\ge 0$).

By adding a strongly convex regularizer to the dual variable of \eqref{eq:primal-dual-deter}, \citet{nesterov2005smooth} optimized a smoothed variant of \eqref{eq:prob} using %
acceleration, thus improving the complexity bound from $O(1/\epsilon^2)$ to $O(1/\epsilon)$. 
Later, Nemirovski and Nesterov, respectively, showed that extragradient-type methods such as mirror-prox  \citep{nemirovski2004prox}  and dual extrapolation \citep{nesterov2007dual} can obtain the same $O(1/\epsilon)$ complexity bound for \eqref{eq:primal-dual-deter} directly, without the use of smoothing or Nesterov acceleration. 
Extragradient-type methods need to perform updates twice per iteration, for both primal and dual variables. \citet{chambolle2011first} introduced an (extrapolated) primal-dual hybrid gradient (\pdhg) method to obtain the same $O(1/\epsilon)$ complexity with an extrapolation step on either the primal or dual variable rather than an extragradient step.
Thus, \pdhg~needs to update primal and dual variables just once per iteration. 
All three kinds of methods have been  extensively studied from different perspectives \citep{nesterov2005excessive,chen2017accelerated,tran2018smooth,song2020optimistic,diakonikolas2020efficient}. 
In the case of large $n$, the focus has been on  randomized variants with low per-iteration cost \citep{zhang2015stochastic, alacaoglu2017smooth, tan2018stochastic,chambolle2018stochastic, carmon2019variance,lei2019primal, devraj2019stochastic,alacaoglu2020random}.

\paragraph{Exploiting the finite-sum structure.} 
The deterministic methods discussed above have  per-iteration cost $O(nd)$, which can be prohibitively high when $n$ is large. 
There has been much work on randomized methods whose per-iteration cost is independent of $n$. 
To be efficient, the iteration count of such methods cannot increase too much over the deterministic methods.
A major development in the past decade of research %
has been the use of \emph{variance reduction} in randomized optimization algorithms, which reduces the per-iteration cost and improves the overall complexity. %
For the variant of Problem~\eqref{eq:prob} in which $g(\vx)$ is {\em smooth}, there exists a vast literature on developing efficient finite-sum solvers; see for example \citet{roux2012stochastic,johnson2013accelerating,lin2014accelerated, zhang2015stochastic, allen2017katyusha,zhou2018simple,lan2019unified, song2020variance}. In particular, the recent work of \citet{song2020variance} has proposed an algorithm called \emph{Variance Reduction via Accelerated Dual Averaging (\vrada)} that matches all  three lower bounds from  \citet{woodworth2016tight,hannah2018breaking} for the smooth and (general/ill-conditioned strongly/well-conditioned strongly) convex settings, using a  simple, unified algorithm description and convergence analysis. 
As discussed in \citet{song2020variance}, the efficiency, simplicity, and unification of \vrada~are due to randomizing \emph{accelerated dual averaging} rather than \emph{accelerated mirror descent} (as was done in \citet{allen2017katyusha}) and adopting a novel initialization strategy. 
The results of \citet{song2020variance} provide the main motivation for this work. %

When the loss function is nonsmooth, the classical variance reduction tricks such as \textsc{svrg} and \textsc{saga} \citep{johnson2013accelerating,defazio2014saga} are no longer available, as their effectiveness strongly depends on smoothness. 
A compromise proposed in \citet{allen2016optimal,allen2017katyusha} is to smoothen and regularize \eqref{eq:prob} and then apply existing finite-sum solvers, such as Katyusha. 
As shown by \citet{allen2017katyusha}, in the nonsmooth and general  convex setting, the resulting overall complexity\footnote{I.e., the number of iterations times the per-iteration cost.}  is improved from $O\big(\frac{nd}{\epsilon}\big)$ to $O\big(nd\log\min\{\frac{1}{\epsilon}, n\} + \frac{\sqrt{n}d}{\epsilon}\big)$; in the nonsmooth and strongly convex setting, it is improved from $O\big(\frac{nd}{\sqrt{\epsilon}}\big)$ to $O\big(nd\log \min\{\frac{1}{\epsilon}, n\} + \frac{\sqrt{n}d}{\sqrt{\epsilon}}\big)$. 
Both of these improved complexity results match the lower bounds of  \citet{woodworth2016tight} for general nonsmooth finite-sums when $\epsilon$ is small. 
However, the smoothing and regularization require tuning of additional  parameters, which complicates the algorithm implementation. %
Meanwhile, it is not clear whether the complexity can be further improved to take advantage of the additional ERM structure that is present in \eqref{eq:prob}. 

\begin{table*}[ht!]
\centering
\begin{threeparttable}[b]
\begin{small}
\caption{Overall complexity and per-iteration cost for solving  \eqref{eq:primal-dual-finite} in the $\sigma$-strongly convex-general concave setting ($\sigma \ge 0$). 
``---'' indicates that the corresponding result does not exist or is unknown.}
\tabcolsep=0.1cm %
\begin{tabular}{|c|c|c|c||c|}
\hline 
\multirow{2}{*}{Algorithm}      &       {General Convex}                &       {Strongly Convex}       & {Strongly Convex}               &     {Per-Iteration}            \\
                                &      (Primal-Dual Gap)           &    (Primal-Dual Gap)      &     (Distance to Solution)   &                Cost            \\
\hline
\textsc{rpd}             &                 \multirow{2}{*}{$O\big( \frac{n^{3/2}d}{\epsilon} \big)$}    &   \multirow{2}{*}{$O\big( \frac{n^{3/2}d}{\sqrt{\epsilon}} \big)$}   &  ---  & \multirow{2}{*}{$O(d)$}    \\   
\citet{dang2014randomized}           &                       &                           &    &    \\
\hline
{\textsc{smart-cd}}      &        \multirow{2}{*}{$O\big( \frac{nd}{\epsilon}\big)$}         &                \multirow{2}{*}{---}         &     \multirow{2}{*}{---}     &        \multirow{2}{*}{$O(d)$}        \\
\citet{alacaoglu2017smooth}  &         &      &   &\\
\hline
\citet{carmon2019variance}           &       $O\big(nd + \frac{\sqrt{nd(n+d)}\log(nd)}{\epsilon}\big)$       &        {---}         &     {---}    &        $O(n+d)$     \\            
\hline
\textsc{spdhg}   &    \multirow{2}{*}{$O\big( \frac{nd}{\epsilon}\big)$ }     &    \multirow{2}{*}{---}      &  \multirow{2}{*}{$O\big( \frac{nd}{\sigma\sqrt{\epsilon}}\big)$}\tnote{1}   &        \multirow{2}{*}{$O(d)$}    \\
\citet{chambolle2018stochastic} & & & &\\
\hline
\textsc{pure-cd}    &   \multirow{2}{*}{$O\big( \frac{n^2d}{\epsilon}\big)$}   &  \multirow{2}{*}{---}         &  \multirow{2}{*}{---}   &        \multirow{2}{*}{$O(d)$}                      \\ 
\citet{alacaoglu2020random} & & & &\\
\hline
{\vrpda}              &   \multirow{2}{*}{$O(nd \log {\min\{ \frac{1}{\epsilon}, n\}}%
+ \frac{d}{\epsilon})$}  & \multirow{2}{*}{$O(nd \log {\min\{ \frac{1}{\epsilon}, n\}}%
+ \frac{d}{\sqrt{\sigma\epsilon}})$}                &     \multirow{2}{*}{$O(nd \log \min\{ \frac{1}{\epsilon}, n\} + \frac{d}{\sigma\sqrt{\epsilon}})$}     &              \multirow{2}{*}{$O(d)$ }          \\
(\textbf{This Paper})  &         &      &   & \\  
\hline
\end{tabular}\label{tb:result}
\begin{tablenotes}
\item[1]It is only applicable when $\epsilon$ is small enough (see \citet[Theorem 5.1]{chambolle2018stochastic}). 
\end{tablenotes}

\end{small}
\vspace{0.1in}
\end{threeparttable}
\vspace{-0.6cm}
\end{table*}
 
For the nonsmooth ERM problem \eqref{eq:prob} considered here and its primal-dual formulation, the literature is much scarcer~\citep{dang2014randomized,alacaoglu2017smooth,chambolle2018stochastic,carmon2019variance,latafat2019new, fercoq2019coordinate, alacaoglu2020random}. 
All of the existing methods target \eqref{eq:primal-dual-deter} directly and focus on extending the aforementioned deterministic algorithms to this case.
As sampling one element of the finite sum from \eqref{eq:prob} is reduced to sampling one dual coordinate in \eqref{eq:primal-dual-deter}, all of these methods can be viewed as coordinate variants of the deterministic counterparts. 
For convenience, we explicitly rewrite \eqref{eq:primal-dual-deter} in the following finite-sum primal-dual %
form: 
\begin{equation}\tag{FS-PD} \label{eq:primal-dual-finite}
\begin{gathered}
\min_{\vx\in\sR^d}\max_{\vy\in\sR^n}L(\vx, \vy),\\
L(\vx, \vy) = \frac{1}{n}\sum_{i=1}^n \left(  y_i\langle \vb_i,  \vx\rangle - g_i^*(y_i)\right) + \ell(\vx). %
\end{gathered}
\end{equation}
\paragraph{Existing approaches.} %
 Table \ref{tb:result} compares \vrpda~to existing randomized algorithms for solving~\eqref{eq:primal-dual-finite} in terms of the overall complexity and per-iteration cost under the setting of uniform sampling and general data matrix (\emph{i.e.,} the data matrix is not necessarily sparse).  %
The competing algorithms (\textsc{rpd}, \textsc{smart-cd}, \textsc{spdhg}, and \textsc{pure-cd}) attain $O(d)$ per-iteration cost, but have overall complexity no better than that of the deterministic algorithms in both the nonsmooth and general/strongly convex settings. 
Meanwhile, the algorithms of \citet{carmon2019variance} %
attain $O(n+d)$ per-iteration cost with improved dependence on the dimension $d$ when $n\ge d.$ 
However, the overall dependence on the dominant term $n$ is still not improved, which raises the question of whether it is even possible to simultaneously achieve the low $O(d)$ per-iteration cost and reduce the overall complexity compared to the deterministic algorithms. 
Addressing this question is the main contribution of our work. 

\paragraph{Our contributions.} We propose %
the \vrpda~algorithm 
for \eqref{eq:primal-dual-finite} in the $\sigma$-strongly convex-general concave setting  $(\sigma \ge 0)$, which corresponds to the nonsmooth and $\sigma$-strongly convex setting of \eqref{eq:prob}  $(\sigma \ge 0)$. For both settings with $\sigma=0$ and $\sigma>0$, \vrpda~has $O(d)$ per-iteration cost and  significantly improves the  best-known overall complexity  results in a unified and simplified way. 
As shown in Table \ref{tb:result}, to find an $\epsilon$-accurate solution in terms of the primal-dual gap, the overall complexity of \vrpda~is 
\begin{align*}
\begin{cases}
O\big( n d\log\big({\min\{ \frac{1}{\epsilon}, n\}}\big)  + \frac{d}{\epsilon}\big),  & \text{ if }  \sigma = 0, \\ 
O\big( n d\log \big({\min\{ \frac{1}{\epsilon}, n\}}\big)  + \frac{d}{\sqrt{\sigma\epsilon}}\big), &\text{ if }  \sigma > 0,
\end{cases}
\end{align*}
which is significantly better than any of the existing results for \eqref{eq:primal-dual-finite}. In particular, we only need $O(nd\log n)$ overall cost  to  attain an $\epsilon$-accurate solution with $\epsilon = \Omega(\frac{1}{n\log(n)})$.
Meanwhile,  when $\epsilon$ is sufficiently small compared to $1/n$, so that the second term in the bound becomes dominant, the overall complexity ($O(\frac{d}{\epsilon})$ for $\sigma=0$ and $O(\frac{d}{\sqrt{\sigma\epsilon}})$ for $\sigma>0$) is independent of $n$, thus showing a $\Theta(n)$ improvement compared to the deterministic algorithms. 
To the best of our knowledge, even for smooth $g_i$'s, %
the improvement of existing algorithms is at most %
$\Theta(\sqrt{n})$ and is attained by accelerated variance reduction methods such as {Katyusha}~\cite{allen2017katyusha} and \textsc{vrada}~\cite{song2020variance}.

\paragraph{Comparison to lower bounds.} 
What makes our results particularly surprising is that they seem to contradict the iteration complexity lower bounds for composite objectives, which are $\Omega(n + \frac{\sqrt{n}}{\epsilon})$ for nonsmooth and general convex objectives and $\Omega(n + \sqrt{\frac{n}{\sigma \epsilon}})$ for nonsmooth and  $\sigma$-strongly convex objectives~\citep{woodworth2016tight}. In \cite[Section 5.1]{woodworth2016tight}, 
 the hard instance for proving the lower bounds has the form $f(x) = \frac{1}{n}\sum_{i=1}^n f_i(x)$---but each $f_i$ is a sum of $k+1$ ``simple’’ terms, each having the form of our $g_i$’s.
The complexity in~\cite{woodworth2016tight} for this hard instance is enabled by hiding the individual vectors corresponding to each simple term, %
an approach that is typical for oracle lower bounds.
In their example, $k = \Theta(\frac{1}{\sqrt{n}\epsilon})$, so the total number of simple terms (of the form $g_i$  in our framework) is  $nk = \Theta(\frac{\sqrt{n}}{\epsilon})$, which leads to the second term in the lower bound. 
(The first $\Omega(n)$ term in this lower bound comes from setting $\epsilon = O(\frac{1}{\sqrt{n}})$.)
Applying our upper bound for iteration complexity to this hard case, we replace $n$ by $nk = \Theta(\frac{\sqrt{n}}{\epsilon})$ to obtain $O(\frac{\sqrt{n}}{\epsilon}\log(\frac{\sqrt{n}}{\epsilon}))$---higher than the \citet{woodworth2016tight} lower bound would be if we were to replace $n$ by $nk$. 
Thus, there is no contradiction.

Remarkably, our upper bounds show that use of the finite-sum primal-dual formulation \eqref{eq:primal-dual-finite} can lead not only to improvements in efficiency (dependence on $\epsilon$), as in \citet{nesterov2005smooth}, but also scalability (dependence on $n$).   
As the ERM problem \eqref{eq:prob} is one of the main motivations for convex finite-sum solvers, it would be interesting to characterize the complexity of %
the problem class \eqref{eq:prob} from the aspect of oracle lower bounds and determine whether \vrpda~attains optimal oracle complexity. 
(We conjecture that it does, at least for small values of $\epsilon$.) 
Since the primary focus of the current paper is on algorithms, we leave the study of lower bounds for future research.

\paragraph{Our techniques.} 
Our \vrpda~algorithm is founded on a new deterministic algorithm \emph{Primal-Dual Accelerated Dual Averaging (\pda)} for  \eqref{eq:primal-dual-deter}. 
Similar to \textsc{pdhg} \citep{chambolle2011first}, \pda~is a primal-dual method with extrapolation on the primal or dual variable. 
However, unlike \textsc{pdhg}, which is based on mirror-descent-type updates (\emph{a.k.a.}~agile updates \citep{allen2017linear}), \pda~performs updates of dual averaging-style \citep{nesterov2015universal} (\emph{a.k.a.}~lazy mirror-descent updates \citep{hazan2016introduction}). 

Our analysis is based on the classical estimate sequence technique, but with a novel design of the estimate sequences that requires careful coupling of primal and dual portions of the gap; see Section~\ref{sec:pdada} for a further discussion. 
The resulting argument allows us to use  a unified parameter setting and convergence analysis for \pda~in all the (general/strongly) convex-(general/strongly) concave settings. 
Thus, by building on \pda~rather than \textsc{pdhg}, the design and analysis of \vrpda~is unified over the different settings and also significantly simplified. 
Moreover, the dual averaging framework allows us to use a novel initialization strategy inspired by the \textsc{vrada} algorithm \citep{song2020variance}, which is key to cancelling the randomized error of order $n$ in the main loop and obtaining our improved results from Table~\ref{tb:result}.  
It is worth noting that although \pda~can be used in all the (general/strongly) convex-(general/strongly) concave settings,  \vrpda~is only applicable to the (general/strongly) convex-general concave settings, which correspond to the nonsmooth and (general/strongly) convex settings of \eqref{eq:prob}. 
Study of \vrpda~in the (general/strongly) convex-strongly concave settings %
is deferred to future research.

\section{Notation and Preliminaries}\label{sec:prelims}

Throughout the paper, we use $\|\cdot\|$ to denote the Euclidean norm. In the case of matrices $\mB,$ $\|\mB\|$ is the standard operator norm defined by $\|\mB\| := \max_{\vx \in \sR^d, \, \|\vx\|\le 1}  \|\mB\vx\|$.

In the following, we provide standard definitions and properties that will be used in our analysis. We start by stating the definition of strongly convex functions that captures both strong  and general convexity, allowing us to treat both cases in a unified manner for significant portions of the analysis.
We use $\bar{\sR} = \sR \cup \{+\infty\}$ to denote the extended real line.

\begin{definition}\label{def:str-cvx}
Given $\sigma \geq 0$, we say that a function $f: \sR^d \to \bar{\sR}$ is $\sigma$-strongly convex, if $\forall \vx, \hat\vx \in \sR^d$, and all $\alpha \in (0, 1)$
\ificml
\begin{align*}
    f((1-\alpha)\vx + \alpha \hat\vx) \leq\;& (1-\alpha)f(\vx) + \alpha f(\hat\vx)\\
    &- \frac{\sigma}{2}\alpha(1-\alpha)\|\hat\vx - \vx\|^2. 
\end{align*}
\else
\begin{align*}
    f((1-\alpha)\vx + \alpha \hat\vx) \leq (1-\alpha)f(\vx) + \alpha f(\hat\vx) - \frac{\sigma}{2}\alpha(1-\alpha)\|\hat\vx - \vx\|^2. 
\end{align*}
\fi
When $\sigma = 0,$ we say that $f$ is (general) convex.
\end{definition}

When $f$ is subdifferentiable at $\vx$ and $\vg_f(\vx) \in \partial f(\vx)$ is any subgradient of $f$ at $\vx$, where  $\partial f(\vx)$ denotes the subdifferential set (the set of all subgradients) of $f$ at $\vx,$ then strong convexity implies that for all $\hat\vx \in \sR^d,$ we have
$$
    f(\hat\vx) \geq f(\vx) + \innp{\vg_f(\vx), \hat\vx - \vx} + \frac{\sigma}{2}\|\hat\vx - \vx\|^2.
$$

Since we work with general nonsmooth convex functions $f$, we require that their \emph{proximal operators}, defined %
as solutions 
to problems of the form
$
    \min_{\vx \in \sR^d} \big\{f(\vx) + \frac{1}{2\tau}\|\vx - \vxh\|^2\big\}
$ 
are efficiently solvable for any $\tau > 0$ and any $\vxh \in \sR^d.$

\paragraph{Problem definition.} 
As discussed in the introduction, our focus is on Problem~\eqref{eq:primal-dual-deter} 
under the following assumption. 

\begin{assumption} \label{ass:general}
$g^*(\vy)$ is proper, l.s.c., and $\gamma$-strongly convex ($\gamma\ge 0$); %
$\ell(\vx)$ is proper, l.s.c., and $\sigma$-strongly convex ($\sigma\ge 0$);
the proximal operators of %
$g^*$  and $\ell$ can be computed efficiently; and $\|\mB\| = R$ for some $R \in (0, \infty)$.  
\end{assumption}

Observe that since $g^*$ and $\ell$ are only assumed to be proper, l.s.c., and (strongly) convex, they can contain indicators of closed convex sets in their description. 
Thus, the problem class that satisfies Assumption~\ref{ass:general} contains constrained optimization. 
We use $\gX$ and $\gY$ to denote the domains of $\ell$ and $g^*$, respectively, defined by $\gX = \mathrm{dom}(\ell) = \{\vx: \ell(\vx) < \infty\}$, $\gY = \mathrm{dom}(g^*) = \{\vy: g^*(\vy) < \infty\}$. When $\gX, \gY$ are bounded, we use $D_{\gX}, D_{\gY}$ to denote their diameters, $D_{\gX} = \max_{\vx, \vu\in\gX} \|\vx - \vu\|$,  $D_{\gY} = \max_{\vy, \vv\in\gY} \|\vy - \vv\|$.

Note that Assumption~\ref{ass:general} does not enforce a finite-sum structure of $g^*$ (and $g$). Thus, for the results that utilize variance reduction, we will make a further assumption.
\begin{assumption} \label{ass:vr}
$g^*(\vy) = \frac{1}{n}\sum_{i=1}^n g_i^*(y_i),$ where each $g_i^* (y_i)$ is  convex and has an efficiently computable proximal operator. Further, $\|\vb_i\|\le R'$, for all $i \in \{1, \dots, n\}$.  
\end{assumption}
Recall that $\mB = \frac{1}{n}[\vb_1, \vb_2, \ldots, \vb_n]^T$. Observe that $R = \|\mB\| \leq %
\frac{1}{n}\Big(\sum_{i=1}^n \|\vb_i\|^2\Big)^{1/2} \leq \frac{1}{n}\sum_{i=1}^n \|\vb_i\| \leq R'.$ %

Observe further that, under Assumption~\ref{ass:vr}, $g^*(\vy)$ is separable over its coordinates. As a consequence, the domain $\gY$ of $g^*$ can be expressed as the Cartesian product of $\mathrm{dom}(g^*_i)$. 
This structure is crucial for variance reduction, as the algorithm in this case relies on performing coordinate descent updates over the dual variables $\vy$. 

\paragraph{Primal-dual gap.}

Given $\vx \in \sR^d$, the \emph{primal value} of the problem~\eqref{eq:primal-dual-deter} is
$
    P(\vx) = \max_{\vv\in\sR^n} \, L(\vx, \vv).
$ 
Similarly, the \emph{dual value}~\eqref{eq:primal-dual-deter} is defined by
$
    D(\vy) = \min_{\vu\in\sR^d} \, L(\vu, \vy). 
$ 
Given a primal-dual pair $(\vx, \vy) \in \sR^d \times \sR^n,$ primal-dual gap is then defined by
\ificml
$ \Gap(\vx, \vy) = P(\vx) - D(\vy) = \max_{(\vu, \vv) \in \sR^d \times \sR^n}\Gap^{\vu, \vv}(\vx, \vy),$ 
\else
\begin{equation}\notag
    \Gap(\vx, \vy) = P(\vx) - D(\vy) = \max_{(\vu, \vv) \in \sR^d \times \sR^n}\Gap^{\vu, \vv}(\vx, \vy),  
\end{equation}
\fi
where we define
\begin{equation} \label{eq:Guv}
\Gap^{\vu, \vv}(\vx, \vy) = L(\vx, \vv) - L(\vu, \vy). %
\end{equation}
Observe that, by definition of $P(\vx)$ and $D(\vy),$ the maximum of $\Gap^{\vu, \vv}(\vx, \vy)$ for fixed $(\vx, \vy)$ is attained when $(\vu, \vv) \in \gX \times \gY,$ so we can also write $\Gap(\vx, \vy) = \max_{(\vu, \vv) \in \gX \times \gY}\Gap^{\vu, \vv}(\vx, \vy)$.

For our analysis, it is useful to work with the relaxed gap $\Gap^{\vu, \vv}(\vx, \vy).$ In particular, to bound the primal-dual gap $\Gap(\vx, \vy)$ for a candidate solution pair $(\vx, \vy)$ constructed by the algorithm, we first bound $\Gap^{\vu, \vv}(\vx, \vy)$ for arbitrary $(\vu, \vv) \in \gX \times \gY.$ The bound on $\Gap(\vx, \vy)$ then follows by taking the supremum of  $\Gap^{\vu, \vv}(\vx, \vy)$ over $(\vu, \vv) \in \gX \times \gY.$ In general, $\Gap(\vx, \vy)$ can be bounded by a finite quantity only when $\gX, \gY$ are compact~\cite{nesterov2005smooth,ouyang2018lower}. If either of $\gX, \gY$ is unbounded, to provide meaningful results and similar to~\citet{chambolle2011first}, we assume that an optimal primal-dual pair $(\vx^*, \vy^*)$ for which $\Gap(\vx^*, \vy^*) = 0$ exists, and bound the primal-dual gap in a ball around $(\vx^*, \vy^*).$   

\paragraph{Auxiliary results.} Additional auxiliary results on  growth of sequences that are needed when establishing convergence rates in our results are provided in Appendix~\ref{appx:growth-of-seqs}.

\section{Primal-Dual Accelerated Dual Averaging %
}\label{sec:pdada}

In this section, we provide the \pda~algorithm for solving Problem~\eqref{eq:primal-dual-deter} under Assumption~\ref{ass:general}. 
The results in this section provide the basis for our results in Section~\ref{sec:vrpdada}  for the finite-sum primal-dual setting. 

\pda~is described in Algorithm~\ref{alg:pda}. 
Observe that the points $\vu$, $\vv$ in the definitions of estimate sequences $\phi_k(\vx), \psi_k(\vy)$ do not play a role in the definitions of $\vx_k$, $\vy_k,$ as the corresponding $\argmin$s are independent of $\vu$ and $\vv$. 
They appear in the definitions of $\phi_k(\vx), \psi_k(\vy)$ only for the convenience of the convergence analysis; the algorithm itself can be stated without them.

\begin{algorithm}[htb!]
\caption{ Primal-Dual Accelerated Dual Averaging (\pda)}\label{alg:pda}
\begin{algorithmic}[1]
\STATE \textbf{Input: } $(\vx_0, \vy_0)\in {\gX \times \gY}, (\vu, \vv)\in \gX \times \gY, 
\sigma \geq 0, \gamma \geq 0, \|\mB\|=R>0, K.$ %
\STATE $a_0 = A_0 = 0.$
\STATE $ \vx_0 = \vx_{-1}\in\sR^d, \vy_0\in\sR^n.$
\STATE $\phi_0(\cdot) = \frac{1}{2}\|\cdot - \vx_0\|^2, \psi_{0}(\cdot) = \frac{1}{2}\|\cdot - \vy_0\|^2$. 
\FOR{$k = 1,2,\ldots, K$} 
\STATE $a_k = \frac{ \sqrt{(1+\sigma A_{k-1})(1+\gamma A_{k-1})}}{\sqrt{2}R}, A_k = A_{k-1} + a_k$.
\STATE $\bar{\vx}_{k-1} =  \vx_{k-1} + \frac{a_{k-1}}{a_{k}}(\vx_{k-1} - \vx_{k-2}).$
\STATE $\vy_{k} = \argmin_{\vy\in\sR^n}\{  \psi_k(\vy) =  \psi_{k-1}(\vy) + a_k (\langle - \mB\bar{\vx}_{k-1}, \vy - \vv \rangle +  g^*(\vy))
\}.$
\STATE $\vx_{k} = \argmin_{\vx\in\sR^d}\{  \phi_k(\vx) =  \phi_{k-1}(\vx) + a_k (\langle {\vx} - \vu, \mB^T\vy_k\rangle+\ell(\vx))
\}.$
\ENDFOR
\STATE \textbf{return } $\tilde{\vy}_K = \frac{1}{A_K}\sum_{k=1}^{K}a_k\vy_k$, $\tilde{\vx}_K = \frac{1}{A_K}\sum_{k=1}^{K}a_k\vx_k.$
\end{algorithmic}
\end{algorithm}

We now outline the  main technical ideas in the \pda~algorithm. 
To bound the relaxed notion of the primal-dual gap $\Gap^{\vu, \vv}(\vxt_k, \vyt_k)$ discussed in Section~\ref{sec:prelims}, we use estimate sequences $\phi_k(\vx)$ and $\psi_k(\vy)$ defined in the algorithm. 
Unlike the classical estimate sequences used, for example, in~\citet{nesterov2005smooth}, these estimate sequences do not directly estimate the values of the primal and dual, %
but instead contain additional bilinear terms, which are crucial for forming an intricate coupling argument between the primal and the dual that leads to the desired convergence bounds. 
In particular, the bilinear term in the definition of $\psi_k$ is defined w.r.t.~an extrapolated point $\bar{\vx}_{k-1}$. 
This extrapolated point is not guaranteed to lie in the domain of $\ell$, but because this point appears only in bilinear terms, we never need to evaluate either $\ell$ or its subgradient at $\bar{\vx}_{k-1}.$ 
Instead, the extrapolated point plays a role in cancelling  error terms that appear when relating the estimate sequences to $\Gap^{\vu, \vv}(\vxt_k, \vyt_k)$. %

Our main technical result for this section concerning the convergence of \pda~is summarized in the following theorem. 
The proof of this result and supporting technical results 
are  provided in Appendix~\ref{appx:omitted-proofs-general}.

\begin{restatable}{theorem}{thmgeneral} \label{thm:general}
Under Assumption \ref{ass:general}, for Algorithm \ref{alg:pda}, 
we have, $\forall (\vu, \vv)\in {\gX \times \gY}$ 
and $k\ge 1,$ 
\begin{equation*}
    \Gap^{\vu, \vv}(\vxt_k, \vyt_k) \leq \frac{\|\vu - \vx_0\|^2 + \|\vv - \vy_0\|^2}{2 A_k}.
\end{equation*}
Further, if $(\vx^*, \vy^*)$ is a primal-dual solution to~\eqref{eq:primal-dual-deter}, then
\ificml
\begin{equation}\label{eq:pda2-bnded-diam}
\begin{aligned}
    (1+\sigma A_k)&\|\vx_k - \vx^*\|^2 + \frac{1+\gamma A_k}{2}\|\vy_k - \vy^*\|^2\\
    &\leq \|\vx_0 - \vx^*\|^2 + \|\vy_0 - \vy^*\|^2.
\end{aligned}
\end{equation}
\else
\begin{equation}\label{eq:pda2-bnded-diam}
    (1+\sigma A_k)\|\vx_k - \vx^*\|^2 + \frac{1+\gamma A_k}{2}\|\vy_k - \vy^*\|^2 \leq \|\vx_0 - \vx^*\|^2 + \|\vy_0 - \vy^*\|^2.
\end{equation}
\fi
In both cases, the growth of $A_k$ can be bounded below as
\ificml
\begin{align*}
    A_k \geq &\frac{1}{\sqrt{2}R} \max \Big\{{k},\; \Big(1 +  \frac{\sqrt{\sigma\gamma}}{\sqrt{2}R}\Big)^{k-1}, \\
    & \frac{\sigma}{9\sqrt{2}R}\Big( [k - k_0]_+ + \max\big\{3.5 \sqrt{R},\, 1\big\}\Big)^{2},\\  %
     &\frac{\gamma}{9\sqrt{2}R}\Big( [k - k_0']_+ + \max\big\{3.5 \sqrt{R},\, 1\big\}\Big)^{2} %
    \Big\},
\end{align*}
\else
\begin{align*}
    A_k \geq \frac{1}{\sqrt{2}R} \max \Big\{&{k},\; \Big(1 +  \frac{\sqrt{\sigma\gamma}}{\sqrt{2}R}\Big)^{k-1}, \\
    & \frac{\sigma}{9\sqrt{2}R}\Big( [k - k_0]_+ + \max\big\{3.5 \sqrt{R},\, 1\big\}\Big)^{2},\\  %
     &\frac{\gamma}{9\sqrt{2}R}\Big( [k - k_0']_+ + \max\big\{3.5 \sqrt{R},\, 1\big\}\Big)^{2} %
    \Big\},
\end{align*}
\fi
where $[\cdot]_+ = \max\{\cdot, 0\}$, $k_0 = \lceil \frac{\sigma}{9\sqrt{2}R} \rceil,$ and $k_0' = \lceil \frac{\gamma}{9\sqrt{2}R} \rceil.$
\end{restatable}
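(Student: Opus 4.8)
The plan is to track, across iterations, a suitably defined potential that combines the two estimate sequences $\phi_k$ and $\psi_k$ with the current function values, and to show that the discrepancy between consecutive potentials telescopes against $\Gap^{\vu,\vv}(\vxt_k,\vyt_k)$. First I would record the explicit forms of the iterates: since $\phi_k(\vx) = \tfrac12\|\vx-\vx_0\|^2 + \sum_{j=1}^k a_j(\langle\vx-\vu,\mB^T\vy_j\rangle + \ell(\vx))$ and similarly for $\psi_k$, strong convexity of $\ell$ (parameter $\sigma$) and of $g^*$ (parameter $\gamma$) give that $\phi_k$ is $(1+\sigma A_k)$-strongly convex and $\psi_k$ is $(1+\gamma A_k)$-strongly convex, with $\vx_k,\vy_k$ their unique minimizers. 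I would then write the standard minimizer inequality
\begin{equation*}
\phi_k(\vx) \;\ge\; \phi_k(\vx_k) + \tfrac{1+\sigma A_k}{2}\|\vx-\vx_k\|^2,
\end{equation*}
and the analogous one for $\psi_k$, evaluated at $\vx=\vu$, $\vy=\vv$; these are what ultimately produce the $\|\vu-\vx_0\|^2$ and $\|\vv-\vy_0\|^2$ terms on the right-hand side after unrolling.

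The core of the argument is the one-step recursion. I would expand $\phi_k(\vx_k) - \phi_{k-1}(\vx_{k-1})$ and $\psi_k(\vy_k) - \psi_{k-1}(\vy_{k-1})$ using the minimizer property at step $k-1$ (which contributes a term $+\tfrac{1+\sigma A_{k-1}}{2}\|\vx_k-\vx_{k-1}\|^2$, resp.\ $+\tfrac{1+\gamma A_{k-1}}{2}\|\vy_k-\vy_{k-1}\|^2$) together with the defining increments $a_k(\langle\vx_k-\vu,\mB^T\vy_k\rangle+\ell(\vx_k))$ and $a_k(\langle-\mB\bar\vx_{k-1},\vy_k-\vv\rangle + g^*(\vy_k))$. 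Adding the primal and dual pieces, the linear cross-terms reorganize into $a_k\big(L(\vx_k,\vv) - L(\vu,\vy_k)\big) = a_k\Gap^{\vu,\vv}(\vx_k,\vy_k)$ plus a residual bilinear error of the form $a_k\langle\mB(\vx_k-\bar\vx_{k-1}),\vy_k-\vv\rangle$. Here is where the extrapolation enters: writing $\bar\vx_{k-1} = \vx_{k-1} + \tfrac{a_{k-1}}{a_k}(\vx_{k-1}-\vx_{k-2})$, we get $\vx_k-\bar\vx_{k-1} = (\vx_k-\vx_{k-1}) - \tfrac{a_{k-1}}{a_k}(\vx_{k-1}-\vx_{k-2})$, so the error splits into a telescoping piece $a_k\langle\mB(\vx_k-\vx_{k-1}),\vy_k-\vv\rangle - a_{k-1}\langle\mB(\vx_{k-1}-\vx_{k-2}),\vy_{k-1}-\vv\rangle$ plus a leftover $a_{k-1}\langle\mB(\vx_{k-1}-\vx_{k-2}),\vy_k-\vy_{k-1}\rangle$. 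The leftover is bounded by Cauchy–Schwarz and Young's inequality: $a_{k-1}\|\mB\|\,\|\vx_{k-1}-\vx_{k-2}\|\,\|\vy_k-\vy_{k-1}\| \le \tfrac{1+\sigma A_{k-2}}{2}\|\vx_{k-1}-\vx_{k-2}\|^2 \cdot c + \cdots$, and this is exactly absorbed by the quadratic slack terms generated above provided $a_k^2 R^2 \le \tfrac12(1+\sigma A_{k-1})(1+\gamma A_{k-1})$ — which is precisely the choice $a_k = \tfrac{\sqrt{(1+\sigma A_{k-1})(1+\gamma A_{k-1})}}{\sqrt2 R}$. Summing the resulting inequality $\Phi_k \le \Phi_{k-1} - a_k\Gap^{\vu,\vv}(\vx_k,\vy_k)$ over $k=1,\dots,K$, using $A_0 = 0$ so that $\Phi_0 = \tfrac12\|\vu-\vx_0\|^2 + \tfrac12\|\vv-\vy_0\|^2$ and the telescoped boundary bilinear term vanishes at $k=0$ (since $\vx_0 = \vx_{-1}$), and invoking convexity–concavity of $L$ together with Jensen to pass from $\tfrac{1}{A_K}\sum a_k\Gap^{\vu,\vv}(\vx_k,\vy_k)$ to $\Gap^{\vu,\vv}(\vxt_K,\vyt_K)$, yields the first displayed bound. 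For the second bound \eqref{eq:pda2-bnded-diam}, I would instead not throw away the terminal quadratic slack $(1+\sigma A_k)\|\vx_k-\vx^*\|^2 + \tfrac{1+\gamma A_k}{2}\|\vy_k-\vy^*\|^2$: setting $(\vu,\vv) = (\vx^*,\vy^*)$ makes $\Gap^{\vu,\vv}(\vx_k,\vy_k)\ge 0$ by optimality, so the potential inequality directly gives the stated contraction.

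Finally, for the growth of $A_k$: the recursion $a_k = \tfrac{\sqrt{(1+\sigma A_{k-1})(1+\gamma A_{k-1})}}{\sqrt2 R}$, $A_k = A_{k-1}+a_k$ is monotone, so $a_k \ge \tfrac{1}{\sqrt2 R}$ immediately gives $A_k \ge \tfrac{k}{\sqrt2 R}$; using $a_k \ge \tfrac{\sqrt{\sigma\gamma}}{\sqrt2 R}A_{k-1}$ gives the geometric bound $A_k \ge \tfrac{1}{\sqrt2 R}(1+\tfrac{\sqrt{\sigma\gamma}}{\sqrt2 R})^{k-1}$; and keeping only one strong-convexity factor, e.g.\ $a_k \ge \tfrac{\sqrt{\sigma A_{k-1}}}{\sqrt2 R}$ once $\sigma A_{k-1}\ge 1$, gives a recursion of the form $\sqrt{A_k} - \sqrt{A_{k-1}}\gtrsim \tfrac{\sqrt\sigma}{\sqrt2 R}$ leading to the quadratic-in-$k$ lower bound, with the offset $k_0$ accounting for the warm-up phase before $\sigma A_{k-1}\ge 1$; symmetrically for $\gamma$. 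These are the auxiliary sequence-growth estimates deferred to Appendix~\ref{appx:growth-of-seqs}. I expect the main obstacle to be the bookkeeping in the one-step recursion — keeping careful track of which quadratic slack term (indexed by $k$, $k-1$, or $k-2$) absorbs which bilinear error term, and verifying that the parameter choice for $a_k$ makes the Young's-inequality constants match exactly rather than merely up to a constant; getting this tight is what allows the single unified parameter rule to cover all four convex/strongly-convex combinations simultaneously.
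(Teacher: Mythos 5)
Your proposal is correct and follows essentially the same route as the paper's proof: the minimizer inequalities for $\phi_k,\psi_k$ (the paper's Lemma~\ref{lem:psi-phi-upper-general}), the one-step recursion from the minimizer property at $k-1$ (Lemma~\ref{lem:psi-phi-lower-general}), the split of the extrapolation error into a telescoping piece plus a leftover absorbed by Young's inequality under the constraint $a_k^2 R^2 \le \tfrac12(1+\sigma A_{k-1})(1+\gamma A_{k-1})$, Jensen to pass to $(\vxt_k,\vyt_k)$, and the choice $(\vu,\vv)=(\vx^*,\vy^*)$ for the distance bound. The only detail left implicit in your sketch — explicitly bounding the terminal bilinear term $-a_k\langle\mB(\vx_k-\vx_{k-1}),\vy_k-\vv\rangle$ by the remaining quadratic slack via a second Young's-inequality step — is exactly how the paper closes the argument, and is already built into your "potential $\Phi_k \ge 0$" framing.
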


\begin{remark}\label{rem:pda2-convergence-of-the-iterates}
As $\sigma \geq 0$ and $\gamma \geq 0,$ Theorem~\ref{thm:general} guarantees that all iterates of \pda~remain within a bounded set, due to Eq.~\eqref{eq:pda2-bnded-diam}. In particular, $\vx_k \in \gB(\vx^*, r_0),$ $\vy_k \in \gB(\vy^*, \sqrt{2}r_0),$ where $r_0 = \sqrt{\|\vx_0 - \vx^*\|^2 + \|\vy_0 - \vy^*\|^2}$ and $\gB(\vz, r)$ denotes the Euclidean ball of radius $r$,
 centered at $\vz.$ 
 Moreover, by rearranging Eq.~\eqref{eq:pda2-bnded-diam}, we can conclude that  $\|\vx^* - \vx_k\|^2 \leq \frac{{r_0}^2}{1 + \sigma A_k}$ and ${\|\vy^* - \vy_k\|^2} \leq \frac{2 {r_0}^2}{1 + \gamma A_k}.$ 
 \end{remark}

\begin{remark}\label{rem:thm-general-convergence}
Observe that when the domains of $g^*$ and $\ell$ are bounded (i.e., when $D_{\gX} <\infty$, $D_{\gY}< \infty,$ and, in particular, in the setting of constrained optimization over compact sets), %
Theorem~\ref{thm:general} implies the following bound on the  primal-dual gap $\Gap(\vxt_k, \vyt_k) \leq \frac{{D_{\gX}}^2 + {D_{\gY}}^2}{2 A_k}$. This bound can be shown to be optimal, using results from~\citet{ouyang2018lower}. 
For unbounded domains of $g^*$ and $\ell$, it is generally not possible to have any finite bound on $\Gap(\vx, \vy)$ unless $(\vx, \vy) = (\vx^*, \vy^*)$ (for a concrete example, see, e.g.,~\citet{diakonikolas2020halpern}). 
In such a case, it is common to restrict $\vu, \vv$ to bounded sets that include $\vx^*, \vy^*$, such as $\gB(\vx^*, r_0),$ $\gB(\vy^*, \sqrt{2}r_0)$ from Remark~\ref{rem:pda2-convergence-of-the-iterates} \citep{chambolle2011first}. 
\end{remark}

\begin{remark}\label{rem:thm-general-fun-val-gap}
To bound the function value gap $f(\vxt_k) - f(\vx^*)$ for Problem~\eqref{eq:prob} using Theorem~\ref{thm:general}, we need only that $D_{\gY}$ is bounded, leading to the bound $f(\vxt_k) - f(\vx^*)\leq \frac{4{r_0}^2 + {D_{\gY}}^2}{A_k},$ 
where $r_0 = \sqrt{\|\vx_0 - \vx^*\|^2 + \|\vy_0 - \vy^*\|^2}$  as in Remark~\ref{rem:pda2-convergence-of-the-iterates}, since for $\vu \in \gB(\vx^*, r_0)$ we have that $\|\vu - \vx_0\| \leq 2r_0$. 
To see this, note that, as the iterates $\vx_i$ of \pda~are guaranteed to remain in $\gB(\vx^*, r_0)$ (by Remark~\ref{rem:pda2-convergence-of-the-iterates}), there is no difference between applying this algorithm to $f$ or to $f + I_{\gB(\vx^*, r_0)},$ where $I_{\gB(\vx^*, r_0)}$ is the indicator function of $\gB(\vx^*, r_0).$ This allows us to restrict $\vu \in \gB(\vx^*, r_0)$ when bounding $f(\vxt_k) - f(\vx^*)$ by $\Gap^{\vu, \vv}(\vxt_k, \vyt_k).$ %
Note that for typical instances of nonsmooth ERM problems, the domain $\gY$ of $g^*$ is compact. Further, if $g^*$ is strongly convex ($\gamma > 0$), then the set $\tilde{\gY} := \{\argmax_{\vy \in \sR^n} \innp{\mB\vx, \vy} - g^*(\vy): \vx \in \gB(\vx^*, r_0)\}$ is guaranteed to be compact. This claim follows from standard results, as in this case $\argmax_{\vy \in \sR^n} \innp{\mB\vx, \vy} - g^*(\vy) = \nabla g(\mB\vx)$  (by the standard Fenchel-Young inequality; see, e.g.,~\citet[Proposition 11.3]{rockafellar2009variational}) and $g$ is $\frac{1}{\gamma}$-smooth. Thus, $\sup_{\vv, \vy  \in \tilde{\gY}}\|\vv - \vy\| = \sup_{\vx, \vu \in \gB(\vx^*, r_0)}\|\nabla g(\mB\vx) - \nabla g(\mB \vu)\| \leq \frac{R}{\gamma}r_0.$  %
\end{remark}

\section{Variance Reduction via Primal-Dual Accelerated Dual Averaging }\label{sec:vrpdada}

\begin{algorithm*}[t!]
\caption{Variance Reduction via Primal-Dual Accelerated Dual Averaging (\vrpda~)}\label{alg:vrpda}
\begin{algorithmic}[1]
\STATE \textbf{Input: } $(\vx_0, \vy_0)\in\gX\times \gY, (\vu, \vv)\in \gX \times \gY$, $\sigma \ge 0, R' >0, K, n.$
\STATE $\phi_0(\cdot) = \frac{1}{2}\|\cdot - \vx_0\|^2, \psi_{0}(\cdot) = \frac{1}{2}\|\cdot - \vy_0\|^2$.
\STATE $a_0 = A_0 = 0$, $\tilde{a}_1 = \frac{1}{2R'}$.
\STATE $\vy_1 = \argmin_{\vy\in\sR^n}\{ \tilde{\psi}_1(\vy) :=  \psi_{0}(\vy) + \tilde{a}_1 (\langle - \mB \vx_0,\vy - \vv\rangle +  g^*(\vy))\}$.
\STATE $\vz_1 = \mB^T\vy_1$.
\STATE $\vx_1 =  \argmin_{\vx\in\sR^d}\{\tilde{\phi}_1(\vx) :=  \phi_{0}(\vx) + \tilde{a}_1 (\langle {\vx} - \vu, \vz_1\rangle+\ell(\vx))\}$.
\STATE $\psi_1 := n\tilde{\psi}_1, \phi_1:= n\tilde{\phi}_1, a_1 = A_1 = n \tilde{a}_1, a_2 = \frac{1}{n-1}a_1, A_2 = A_1 + a_2$.
\FOR{$k = 2,3,\ldots, K$}
\STATE $\bar{\vx}_{k-1} =  \vx_{k-1} + \frac{a_{k-1}}{a_{k}}(\vx_{k-1} - \vx_{k-2}).$
\STATE Pick $j_k$ uniformly at random in $[n].$
\STATE $\vy_{k} = \argmin_{\vy\in\sR^n}\{  \psi_k(\vy) =  \psi_{k-1}(\vy) + a_k (- \vb_{j_k}^T \bar{\vx}_{k-1} (y_{j_k} - v_{j_k}) +  g_{j_k}^*(y_{j_k} ))\}$. \label{ln:vrpda-y_k-comp}
\STATE $\vx_{k} = \argmin_{\vx\in\sR^d}\{  \phi_k(\vx) =  \phi_{k-1}(\vx) + a_k (\langle {\vx} - \vu,  \vz_{k-1} + (y_{k, j_k} - y_{k-1, j_k})\vb_{j_k}  \rangle+\ell(\vx))\}.$ \label{ln:vrpda-x_k-comp}
\STATE $\vz_k = \vz_{k-1} + \frac{1}{n}(y_{k, j_k} - y_{k-1, j_k})\vb_{j_k}.$
\STATE  $a_{k+1} = \min \Big( \big(1+\frac{1}{n-1} \big) a_{k}, \frac{\sqrt{n(n+\sigma A_{k})}}{2R'} \Big)$, $A_{k+1} = A_k + a_{k+1}.$
\ENDFOR
\STATE \textbf{return } $\vx_K$ or $\tilde{\vx}_K := \frac{1}{A_K}\sum_{i=1}^K a_i \vx_i$.
\end{algorithmic}%
\end{algorithm*}

We now study the finite-sum form \eqref{eq:primal-dual-finite} of \eqref{eq:primal-dual-deter}, making use of the properties of the finite-sum terms described in Assumption~\ref{ass:vr}.
In Algorithm \ref{alg:vrpda}, we describe %
\vrpda\, which is a randomized coordinate variant of the \pda~algorithm from Section~\ref{sec:pdada}. 
By extending the unified nature of \pda, \vrpda~ provides a unified and simplified treatment for both the general convex-general concave $(\sigma=0)$ setting and the strongly convex-general concave setting.

To provide an algorithm with complexity better than the deterministic counterpart \pda, we combine the deterministic initialization strategy of full primal-dual update in Steps 4-6 with randomized primal-dual updates in the main loop---a strategy inspired by the recent paper of \citet{song2020variance}. 
The use of the factor $n$ during initialization, in Step 7, helps to cancel an error term of order $O(n)$ in the analysis.

The main loop (Steps 8-15) randomizes the main loop of \pda~ by introducing sampling in Step 10 and adding an auxiliary variable $\vz_k$ that is updated with $O(d)$ cost in Step 13. ($\vz_1$ is initialized in Step 5). 
In Step 11, we update the estimate sequence $\psi_k$ by adding a term involving only the $j_k$ component of the finite sum, rather than the entire sum, as is required in Step 8 of Algorithm \ref{alg:pda}. 
As a result, although we define the estimate sequence for the entire vector $\vy_k$, each update to $\vy_k$ requires updating {\em only the $j_k$ coordinate} of $\vy_k$. 
In Step 12, we use a ``variance reduced gradient'' $\vz_{k-1} + (y_{k, j_k} - y_{k-1, j_k})\vb_{j_k}$ to update  $\phi_k$, helping to cancel the error from the randomized update of Step 11. 
The update of the sequences $\{a_{k}\}$, $\{A_{k}\}$ appears at the end of the main loop,  to accommodate their modified definitions. %
The modified update for  $a_{k+1}$ ensures that $a_k$ cannot have exponential growth with a rate  higher than $\big(1+\frac{1}{n-1}\big)$, which is an intrinsic constraint for sampling with replacement (see %
\citet{song2020variance,hannah2018breaking}).

Finally, as Algorithm \ref{alg:vrpda} is tailored to the nonsmooth ERM problem \eqref{eq:prob}, we only return the last iterate $\vx_k$ or the weighed average iterate $\tilde{\vx}_k$ on the primal side, %
even though we provide guarantees for both primal and dual variables.

Algorithm~\ref{alg:vrpda} provides sufficient detail for %
the convergence analysis, but its efficient implementation is not immediately clear, due especially to Step 11. 
An implementable version is described in Appendix~\ref{appx:comp-considerations}, showing that the per-iteration cost is $O(d)$ and that $O(n)$ additional storage is required.

Our main technical result is summarized in Theorem~\ref{thm:vr}. Its proof relies on three main technical lemmas that bound the growth of estimate sequences $\phi_k(\vx_k)$ and $\psi_k(\vy_k)$ below and above. Proofs are provided in %
Appendices~\ref{appx:omitted-proofs-vr} and \ref{appx:growth-of-seqs}.

\begin{restatable}{theorem}{mainthmvr}\label{thm:vr}
Suppose that Assumption \ref{ass:vr} holds.
Then for any  $(\vu, \vv) \in \gX \times \gY,$ the vectors $\vx_k$, $\vy_k$, $k=2,3,\dotsc,K$ and the average $\tilde{\vx}_K$  generated by Algorithm~\ref{alg:vrpda}  satisfy the following bound for $k =2,3,\dotsc,K$:
\begin{equation}\notag
    \E[\Gap^{\vu, \vv}(\vxt_k, \vyt_k)] \leq \frac{n(\|\vu - \vx_0\|^2 + \|\vv - \vy_0\|^2)}{2A_k},
\end{equation}
where 
$
\tilde{\vy}_K  := \frac{na_K\vy_K+  \sum_{i=2}^{K-1} ( n a_i - (n-1)a_{i+1})\vy_i}{A_K}. 
$ 
Moreover, if $(\vx^*, \vy^*)$ is a primal-dual solution to~\eqref{eq:primal-dual-deter}, then 
\ificml
\begin{equation*}
\begin{aligned}
    &\E\Big[\frac{n}{4}\|\vy^* - \vy_k\|^2 + \frac{n + \sigma A_k}{2}\|\vx^* - \vx_k\|^2 \Big]\\
    &\hspace{1in}\leq \frac{n(\|\vx^* - \vx_0\|^2 + \|\vy^* - \vy_0\|^2)}{2}.
\end{aligned}
\end{equation*}
\else
\begin{equation*}
    \E\Big[\frac{n}{4}\|\vy^* - \vy_k\|^2 + \frac{n + \sigma A_k}{2}\|\vx^* - \vx_k\|^2 \Big] \leq \frac{n(\|\vx^* - \vx_0\|^2 + \|\vy^* - \vy_0\|^2)}{2}.
\end{equation*}
\fi
In both cases,  $A_k$ is bounded below as follows:
\ificml
\begin{align*}
    A_k \geq \max \Big\{&\frac{n-1}{2R'}\Big(1 + \frac{1}{n-1}\Big)^k \mathds{1}_{k \leq k_0},\\
    & \frac{(n-1)^2 \sigma}{(4R')^2 n}(k-k_0 + n-1)^2 \mathds{1}_{k \geq k_0},\\
    & \frac{n(k - K_0 + n - 1)}{2R'}\mathds{1}_{k \geq K_0}\Big\},
\end{align*}
\else
\begin{align*}
    A_k \geq \max \Big\{&\frac{n-1}{2R'}\Big(1 + \frac{1}{n-1}\Big)^k \mathds{1}_{k \leq k_0},\;
     \frac{(n-1)^2 \sigma}{(4R')^2 n}(k-k_0 + n-1)^2 \mathds{1}_{k \geq k_0},\\
    & \frac{n(k - K_0 + n - 1)}{2R'}\mathds{1}_{k \geq K_0}\Big\},
\end{align*}
\fi
where $\mathds{1}$ denotes the indicator function, $K_0 = \big\lceil \frac{\log(n)}{\log(n) - \log(n-1)}\big\rceil,$ $k_0 = \big\lceil \frac{\log B_{n,\sigma,R'}}{\log (n) - \log (n-1)} \big\rceil,$ and
\begin{align*}
B_{n,\sigma,R'} & = 
 \frac{\sigma n (n-1)}{4 R'} +  \sqrt{\Big(  \frac{\sigma n (n-1)}{4 R'}\Big)^2 + n^2} \\
& \geq n\max\Big\{1, \frac{\sigma (n-1)}{2 R'}\Big\}.
\end{align*}
\end{restatable}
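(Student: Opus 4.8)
The plan is to extend the coupled estimate-sequence argument behind Theorem~\ref{thm:general} to the randomized, order-$n$-initialized Algorithm~\ref{alg:vrpda}. First I would record the structural facts used throughout: since $\ell$ is $\sigma$-strongly convex and $\phi_0=\tfrac12\|\cdot-\vx_0\|^2$, the rescaling in Step~7 makes $\phi_k$ exactly $(n+\sigma A_k)$-strongly convex with unique minimizer $\vx_k$; likewise $\psi_k$ is $n$-strongly convex with unique minimizer $\vy_k$ (we take $\gamma=0$, since \vrpda~is only claimed for the general-concave case), and in Step~11 only the $j_k$-th coordinate of $\vy_k$ changes. Let $\gF_{k-1}$ be the $\sigma$-algebra generated by $j_2,\dots,j_{k-1}$, so $j_k$ is independent of $\vx_{k-1},\bar\vx_{k-1},\vz_{k-1}$ and $\E[\,\cdot\mid\gF_{k-1}]$ turns the single-coordinate quantity $(y_{k,j_k}-y_{k-1,j_k})\vb_{j_k}$ into a $\tfrac1n\mB^T(\vy_k-\vy_{k-1})$-type sum and $\vz_{k-1}$ into $\mB^T\vy_{k-1}$. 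I would also split the relaxed gap as $\Gap^{\vu,\vv}(\vx,\vy)=\big[\innp{\mB(\vx-\vu),\vy}+\ell(\vx)-\ell(\vu)\big]+\big[\innp{\mB\vx,\vv-\vy}+g^*(\vy)-g^*(\vv)\big]$, a ``primal'' bracket controlled by $\phi_k$ and a ``dual'' bracket controlled by $\psi_k$.

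The core is three technical lemmas. (i) A \emph{lower bound on $\psi_k(\vy_k)$}: combining strong convexity of each $\psi_{i-1}$ at its minimizer $\vy_{i-1}$ with the optimality condition defining $\vy_i$, and applying convexity of the $g_i^*$ coordinatewise, one shows $\psi_k(\vy_k)$ dominates $A_k$ times the dual bracket evaluated at the weighted average $\vyt_k$, plus leftover quadratics of the form $\tfrac n4\|\vy_k-\vv\|^2$; this is precisely where the non-standard average $\vyt_K$ arises, since undoing $\E_{j_i}$ multiplies the $i$-th increment by $n$ and the telescoped effective weight on $\vy_i$ becomes $na_i-(n-1)a_{i+1}$ (one checks these are nonnegative, sum to $A_K$, and vanish in the geometric regime where $a_{i+1}=\tfrac n{n-1}a_i$). (ii) A symmetric \emph{lower bound on $\phi_k(\vx_k)$} in terms of $A_k$ times the primal bracket at $\vxt_k$ plus the slack $\tfrac{\sigma A_k}{2}\|\vx_k-\vu\|^2$ from strong convexity of $\ell$. (iii) An \emph{upper-bound one-step recursion} for $\E[\phi_k(\vx_k)+\psi_k(\vy_k)\mid\gF_{k-1}]$: expanding the definitions, the bilinear terms $\innp{-\mB\bar\vx_{k-1},\vy}$ and $\innp{\vx,\vz_{k-1}+(y_{k,j_k}-y_{k-1,j_k})\vb_{j_k}}$ are engineered so that after $\E_{j_k}$ and the extrapolation identity $\bar\vx_{k-1}=\vx_{k-1}+\tfrac{a_{k-1}}{a_k}(\vx_{k-1}-\vx_{k-2})$, the cross terms telescope and the negative quadratics in $\|\vx_k-\vx_{k-1}\|^2$ and $\|\vy_k-\vy_{k-1}\|^2$ coming from strong convexity of the estimate functions absorb the coupling error bounded via $\|\vb_{j_k}\|\le R'$ — this is where the step-size constraints $a_{k+1}\le\tfrac{\sqrt{n(n+\sigma A_k)}}{2R'}$ and $a_{k+1}\le(1+\tfrac1{n-1})a_k$ enter, and where the factor $n$ from Step~7 cancels an $O(n)$-sized error at the first step. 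I expect Lemma~(iii) to be the main obstacle: bookkeeping the conditional expectations, keeping the single-coordinate updates consistent with the full-vector estimate sequences, and checking that the extrapolation together with the $\vz_k$ variance-reduction correction make everything telescope with the right signs is the delicate, computation-heavy part.

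Combining (i)--(iii), telescoping over $k$ and taking total expectation, the boundary contributions collapse to $\phi_1(\vx_1)+\psi_1(\vy_1)\le\tfrac n2(\|\vu-\vx_0\|^2+\|\vv-\vy_0\|^2)$ (again using the $n$-rescaled initialization to kill the $O(n)$ slack), which, after dropping nonnegative leftover quadratics, yields $\E[\Gap^{\vu,\vv}(\vxt_k,\vyt_k)]\le\tfrac{n(\|\vu-\vx_0\|^2+\|\vv-\vy_0\|^2)}{2A_k}$. Specializing $(\vu,\vv)=(\vx^*,\vy^*)$ and using $\Gap^{\vx^*,\vy^*}\ge0$ instead discards the gap term and retains the leftover quadratics, giving $\E\big[\tfrac{n+\sigma A_k}2\|\vx^*-\vx_k\|^2+\tfrac n4\|\vy^*-\vy_k\|^2\big]\le\tfrac n2(\|\vx^*-\vx_0\|^2+\|\vy^*-\vy_0\|^2)$ (the $\tfrac n4$ rather than $\tfrac n2$ because part of the $n$-strong convexity of $\psi_k$ was consumed in the recursion).

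Finally, the lower bound on $A_k$ is a self-contained analysis of the scalar recursion $a_1=\tfrac n{2R'}$, $a_2=\tfrac{a_1}{n-1}$, $a_{k+1}=\min\{(1+\tfrac1{n-1})a_k,\ \tfrac{\sqrt{n(n+\sigma A_k)}}{2R'}\}$, $A_{k+1}=A_k+a_{k+1}$. While the first branch is active, $a_k$ grows geometrically at rate $1+\tfrac1{n-1}$, and summing gives $A_k\ge\tfrac{n-1}{2R'}(1+\tfrac1{n-1})^k$; this branch is active up to $k_0$, the first index where $\tfrac{\sqrt{n(n+\sigma A_k)}}{2R'}$ drops below $(1+\tfrac1{n-1})a_k$, which one solves to be $\lceil\log B_{n,\sigma,R'}/\log\tfrac n{n-1}\rceil$ with $B_{n,\sigma,R'}$ the positive root of the associated quadratic. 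For $k\ge k_0$ one substitutes the running lower bound on $A_k$ into the recursion and invokes the auxiliary growth lemmas of Appendix~\ref{appx:growth-of-seqs}: when $\sigma=0$ the second branch gives $a_{k+1}=\tfrac n{2R'}$, hence the linear term $\tfrac n{2R'}(k-K_0+n-1)$ for $k\ge K_0=\lceil\log n/\log\tfrac n{n-1}\rceil$; when $\sigma>0$ the recursion $a_{k+1}\propto\sqrt{A_k}$ yields the quadratic term $\tfrac{(n-1)^2\sigma}{(4R')^2 n}(k-k_0+n-1)^2$. Taking the maximum of the three regimes gives the stated bound.
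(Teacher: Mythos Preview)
Your proposal contains the right building blocks—the estimate sequences, the conditional-expectation bookkeeping, the telescoping via the extrapolation identity, the way the non-standard weights $na_i-(n-1)a_{i+1}$ produce $\vyt_k$, and the $A_k$ growth analysis—but you have inverted the roles of the upper and lower bounds, and this leads to a concrete wrong claim about the initialization.

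The one-step recursion you describe in (iii) (using strong convexity of $\psi_{k-1},\phi_{k-1}$ at their minimizers together with the extrapolation identity and Young's inequality under the step-size constraint to absorb coupling errors) is in the paper a \emph{lower} bound on $\E[\psi_k(\vy_k)+\phi_k(\vx_k)]$ (Lemma~\ref{lem:psi-phi-lower}); after telescoping it yields $\E[\psi_k+\phi_k]\ge \psi_1(\vy_1)+\phi_1(\vx_1)+\sum_i[\text{Gap-type terms}]+[\text{leftover quadratics}]$. The term $\tfrac n2(\|\vu-\vx_0\|^2+\|\vv-\vy_0\|^2)$ does \emph{not} arise from bounding $\psi_1+\phi_1$: by Lemma~\ref{lem:init}, $\psi_1(\vy_1)+\phi_1(\vx_1)$ is an \emph{equality} containing $a_1\,\Gap^{\vu,\vv}(\vx_1,\vy_1)$ and $a_1(g^*(\vv)+\ell(\vu))$, and is not upper bounded by distances alone. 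The distance term instead comes from a separate \emph{static} upper bound at iteration $k$ (Lemma~\ref{lem:psi-phi-upper}): one uses the first-order optimality condition of $\vy_k$ for $\psi_k$ (not strong convexity at $\vy_{k-1}$) to rewrite $\psi_k(\vy_k)$, then convexity of $g^*$, to obtain $\psi_k(\vy_k)\le \sum_i a_i g^*_{j_i}(v_{j_i})+a_1 g^*(\vv)+\tfrac n2\|\vv-\vy_0\|^2-\tfrac n2\|\vv-\vy_k\|^2$, and analogously $\phi_k(\vx_k)\le A_k\ell(\vu)+\tfrac n2\|\vu-\vx_0\|^2-\tfrac{n+\sigma A_k}{2}\|\vu-\vx_k\|^2$. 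The $g^*(\vv)+\ell(\vu)$ contributions then cancel between the upper and lower bounds, leaving $A_k\,\E[\Gap^{\vu,\vv}(\vxt_k,\vyt_k)]\le\tfrac n2(\|\vu-\vx_0\|^2+\|\vv-\vy_0\|^2)-\tfrac n4\|\vv-\vy_k\|^2-\tfrac{n+\sigma A_k}{2}\|\vu-\vx_k\|^2$. Your proposal has no ingredient playing the role of this static upper bound: what you label (i)--(ii) and (iii) are both aspects of the same lower-bound recursion, so as written the argument would not close.
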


{Observe that, due to the randomized nature of the algorithm, the convergence bounds are obtained in expectation w.r.t.~the random choices of coordinates $j_k$ over iterations. Now let us comment on the iteration complexity of \vrpda, given target error $\epsilon > 0.$ For concreteness, let $D^2 := \|\vu - \vx_0\|^2 + \|\vv- \vy_0\|^2,$ where $D^2$ can be bounded using the same reasoning as in Remarks~\ref{rem:thm-general-convergence} and \ref{rem:thm-general-fun-val-gap}. To bound the gap by $\epsilon,$ we need $A_k \geq \frac{n D^2}{2\epsilon}$. When $\epsilon \geq \frac{n R' D^2}{(n-1)B_{n, R', \sigma}}$, then $k = \Big\lceil \frac{\log(\frac{nR'D^2}{(n-1)\epsilon})}{\log(n) - \log(n-1)} \Big\rceil = O \big(n \log(\frac{R'D}{\epsilon})\big)$ iterations suffice, as in this case $k \leq k_0$. When $\epsilon < \frac{n R' D^2}{(n-1)B_{n, \sigma,R'}}$, then the bound on $k$ is obtained by ensuring that either of the last two terms bounding $A_k$ below in Theorem~\ref{thm:vr} is bounded below by $\frac{n D^2}{2\epsilon},$ leading to $k = O\big(n\log(B_{n, \sigma,R'}) + \min\{\frac{R'D}{\sqrt{\sigma \epsilon}},\, \frac{R'D^2}{\epsilon}\}\big).$}

\begin{remark}\label{rem:feature-selection}
{%
The usefulness of our results extends beyond ERM problems~\eqref{eq:prob}, due to the symmetry of the primal-dual problem~\eqref{eq:primal-dual-deter} that \vrpda~solves. In particular, the symmetry allows exchanging roles of $\vx$ and $\vy$ in cases where $d \gg n$, $\ell$ is decomposable over the coordinates with each coordinate function having an efficiently computable prox-oracle, and $g^*$ has an efficiently computable prox-oracle. A concrete example are feature selection problems, including, e.g., Lasso~\cite{tibshirani1996regression}, elastic net~\cite{zou2005regularization}, square-root Lasso/distributionally robust optimization~\cite{belloni2011square,blanchet2019robust} and $\ell_1$-regularized logistic regression~\cite{ravikumar2010high}. For all these problems (ignoring the dependence on $R', D$), the overall complexity that can be achieved with \vrpda~is $O(nd \log(\min\{d, 1/\epsilon\}) + \min\{\frac{n}{\epsilon}, \frac{n}{\sqrt{\sigma \epsilon}}\})$; i.e., we can reduce the dependence on $d$ (by a factor $d$ when $\epsilon$ is small enough). To the best of our knowledge, such a result cannot be obtained with any other variance reduction methods.}
\end{remark}

\vspace{-5mm}

\section{Numerical Experiments}\label{sec:num-exp}
\vspace{-2mm}

\vspace{-3mm}
\begin{figure*}[ht!]
\centering
\subfigure[\texttt{a9a}, $\sigma = 0,$ average]{\includegraphics[width=0.24\textwidth]{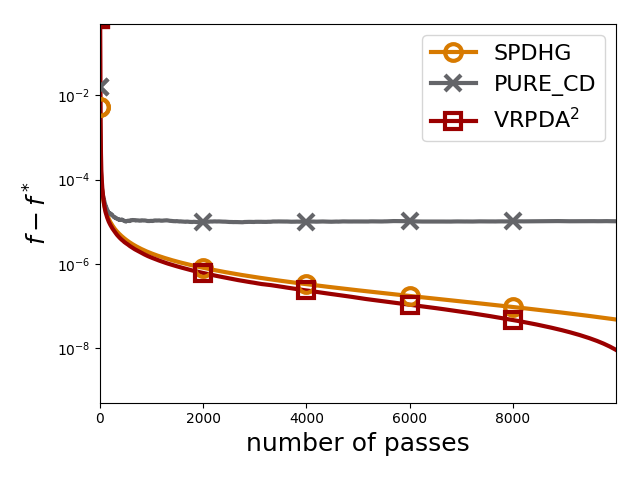}\label{fig:smooth-5-blog}}
\subfigure[\texttt{a9a}, $\sigma = 0,$ last]{\includegraphics[width=0.24\textwidth]{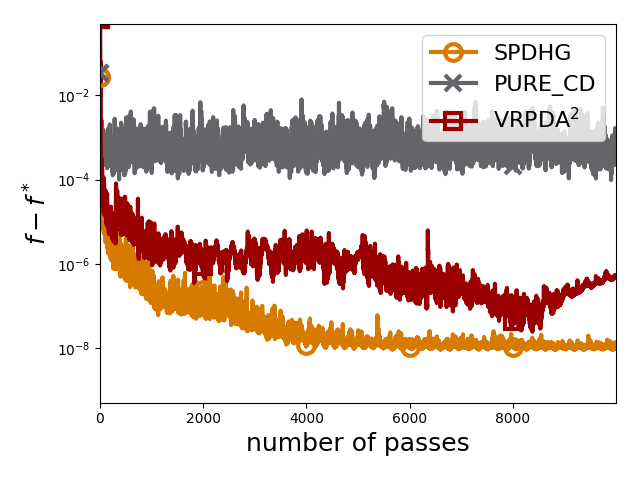}\label{fig:smooth-10-blog}}
 \subfigure[\texttt{MNIST}, $\sigma = 0,$ average]{\includegraphics[width=0.24\textwidth]{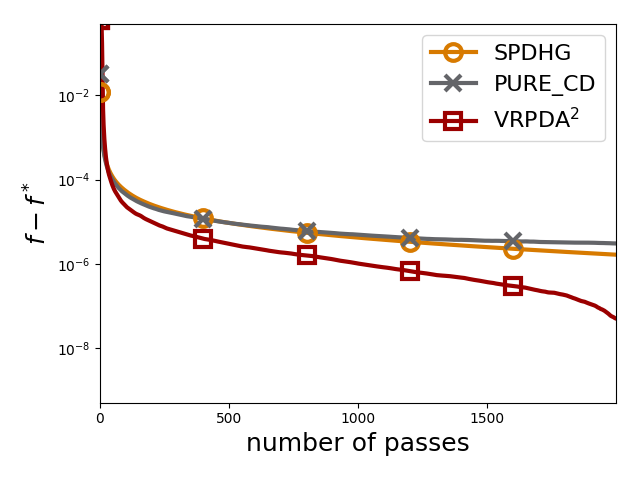}\label{fig:lip-med-20}}
 \subfigure[\texttt{MNIST}, $\sigma = 0,$ last]{\includegraphics[width=0.24\textwidth]{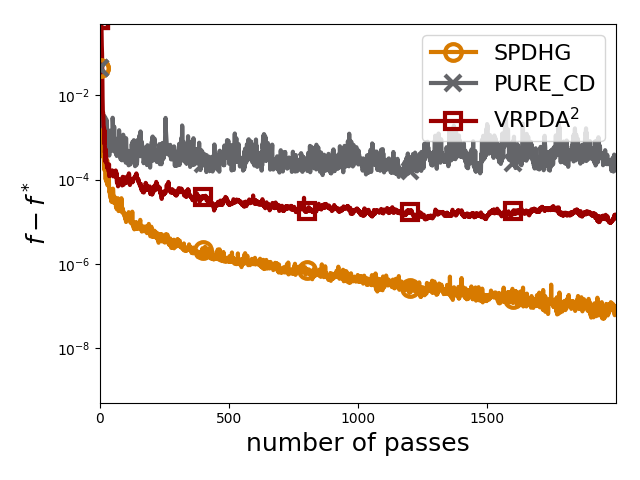}\label{fig:lip-med-40}}
\subfigure[\texttt{a9a}, $\sigma = 10^{-8},$  average]{\includegraphics[width=0.24\textwidth]{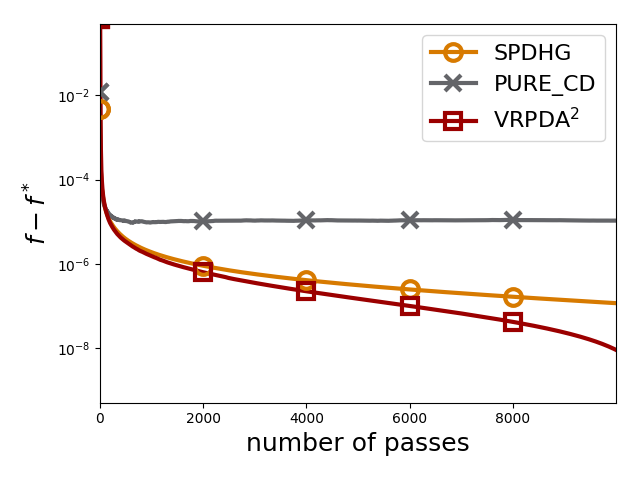}\label{fig:smooth-20-blog}}
\subfigure[\texttt{a9a}, $\sigma = 10^{-8},$  last]{\includegraphics[width=0.24\textwidth]{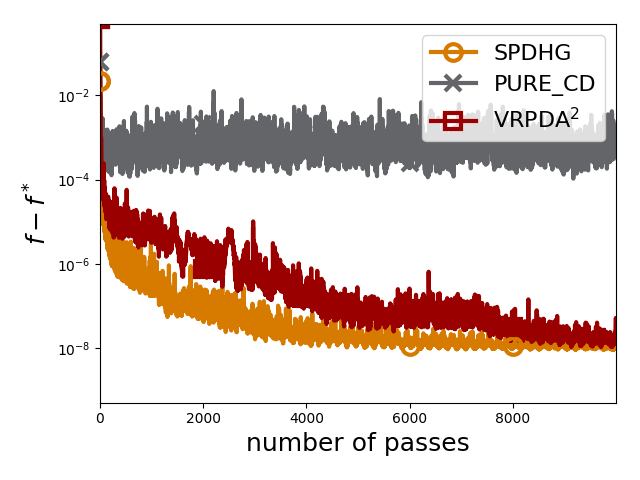}\label{fig:smooth-40-blog}}
\subfigure[\texttt{MNIST}, $\sigma = 10^{-8},$ average]{\includegraphics[width=0.24\textwidth]{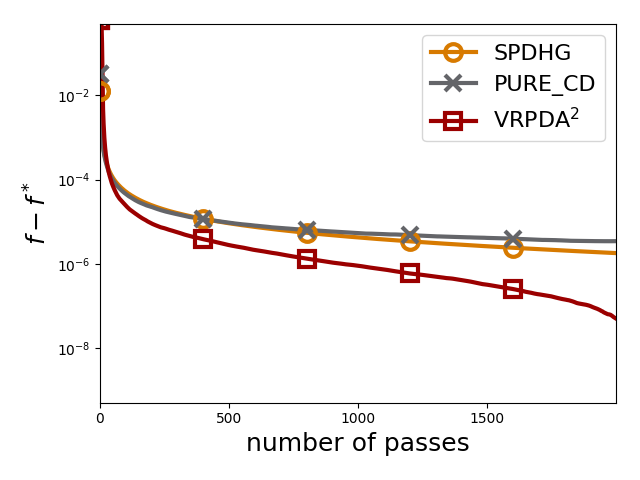}\label{fig:acc-lip-5-blog}}
 \subfigure[\texttt{MNIST}, $\sigma = 10^{-8},$ last]{\includegraphics[width=0.24\textwidth]{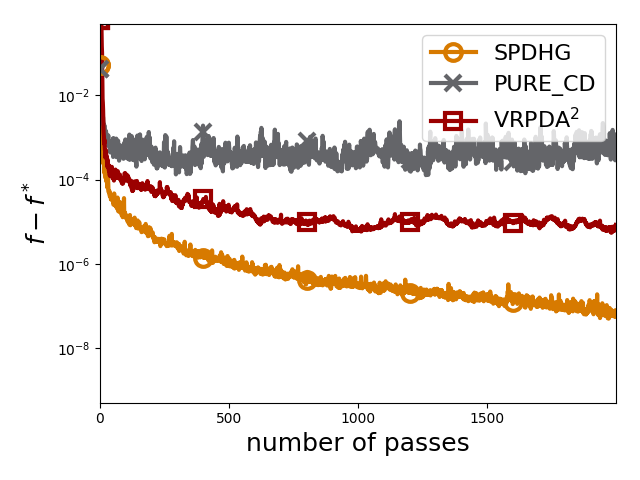}\label{fig:acc-lip-10-blog}}
\subfigure[\texttt{a9a}, $\sigma = 10^{-4},$  average]{\includegraphics[width=0.24\textwidth]{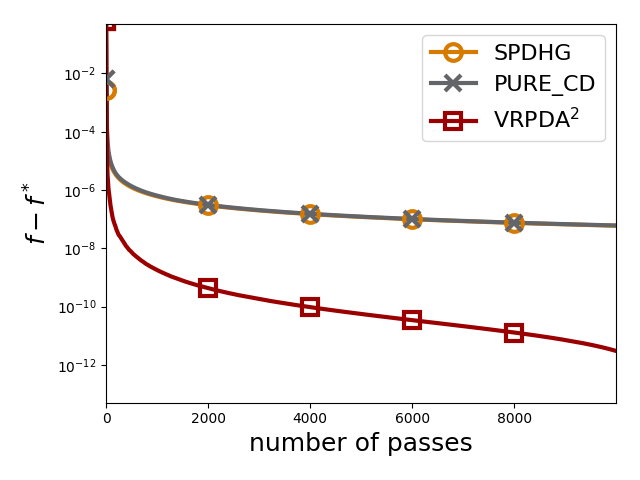}\label{fig:lip-5-blog}}
 \subfigure[\texttt{a9a}, $\sigma = 10^{-4},$  last]{\includegraphics[width=0.24\textwidth]{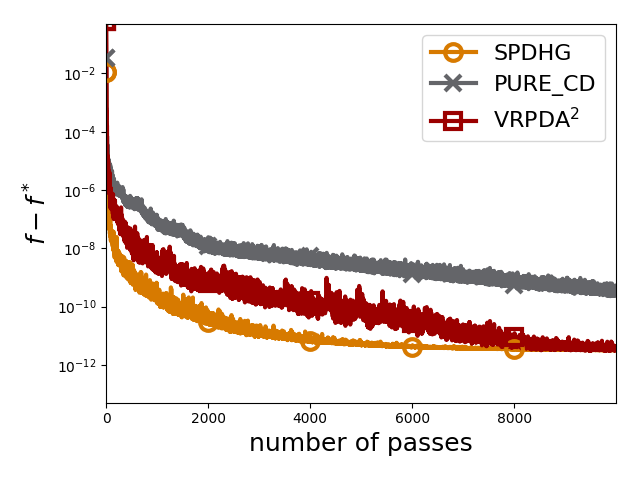}\label{fig:lip-10-blog}}
 \subfigure[\texttt{MNIST}, $\sigma = 10^{-4},$ average]{\includegraphics[width=0.24\textwidth]{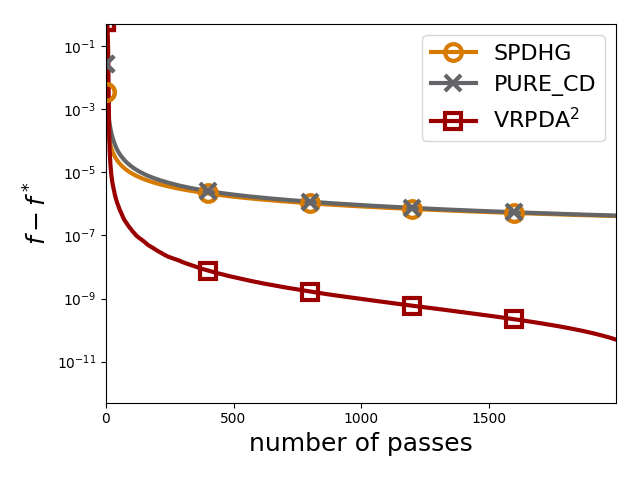}\label{fig:acc-lip-med-20}}
 \subfigure[\texttt{MNIST}, $\sigma = 10^{-4},$ last]{\includegraphics[width=0.24\textwidth]{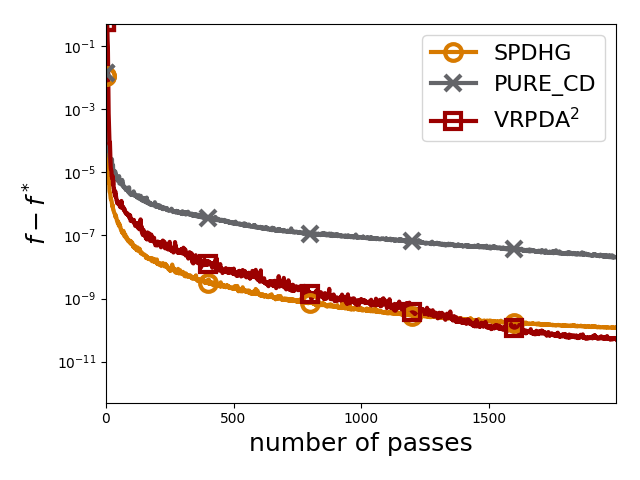}\label{fig:acc-lip-med-40}}
\caption{Comparison of \vrpda~to \textsc{spdhg} and \textsc{pure\_cd} run for the elastic net-regularized SVM, on \texttt{a9a} and \texttt{MNIST} datasets. In all the plots, $\sigma$ is the strong convexity parameter of the regularizer $\ell$; ``last'' refers to the last iterate, ``average'' to the average iterate. For all problem instances, \vrpda~attains either similar or improved convergence compared to other algorithms.}
\label{fig:vrpda}
\end{figure*}

We study the performance of \vrpda~using
the elastic net-regularized support vector machine (SVM) problem, which corresponds to  \eqref{eq:prob} with $g_i(\vb_i^T\vx) = \max\{1-c_i\vb_i^T\vx, 0\},$ $c_i\in\{1, -1\}$ and $\ell(\vx) = \lambda\|\vx\|_1 + \frac{\sigma}{2}\|\vx\|_2^2$, $\lambda\ge 0,$ $\sigma \ge 0$. This problem is nonsmooth and general convex if $\sigma=0$ or strongly convex if $\sigma>0.$ Its primal-dual formulation is 
\begin{equation} 
\begin{gathered}
\min_{\vx\in\sR^d}\max_{{-1\le y_i \le 0,\, i\in[n]}}L(\vx, \vy),\\
L(\vx, \vy) = \frac{1}{n}\sum_{i=1}^n y_i\left(  \langle c_i\vb_i,  \vx\rangle - 1\right) + \lambda\|\vx\|_1 + \frac{\sigma}{2}\|\vx\|_2^2. 
\end{gathered}\nonumber
\end{equation}
We compare \vrpda~with two competitive algorithms \textsc{pdhg} \cite{chambolle2018stochastic} and \textsc{pure\_cd} \cite{alacaoglu2020random} on standard \texttt{a9a} and \texttt{MNIST} datasets from the LIBSVM library~\cite{LIBSVM}.%
\footnote{For each sample of \texttt{MNIST}, we reassign the label as $1$ if it is in $\{5,6,\ldots,9\}$ and $-1$ otherwise.} %
Both datasets \texttt{a9a} and \texttt{MNIST} we use are large scale with $n = 32561$, $d=123$ for \texttt{a9a}, and $n = 60000, d = 780$ for \texttt{MNIST}. 
For simplicity, we normalize each data sample to unit Euclidean norm, thus the Lipschitz constants (such as $R'$ in \vrpda) in the algorithms are at most $1$. Then we tune the Lipschitz constants in $\{0.1, 0.25, 0.5, 0.75, 1\}$\footnote{In our experiments, all the algorithms  diverge when the Lipschitz constant is set to $0.1$.}. As is standard for ERM, we plot the function value gap of the primal problem \eqref{eq:prob} in terms of the number of passes over the dataset. The plotted function value gap was evaluated using an estimated value $\tilde{f}^*$ of $f^*  = \argmin_{\vx} f(\vx).$ For the plots to depict an accurate estimate of the function value gap, it is required that the true function value gap $f - f^*$ dominates the error of the estimate $\tilde{f}^* - f^*.$ In our numerical experiments, this is achieved by running the algorithms for $30\times$ iterations compared to what is shown in the plots, choosing the lowest function value $f_{\min}$ seen over the algorithm run and over all the algorithms, and then setting $\tilde{f}^* = f_{\min} - \delta,$ where $\delta$ is chosen either as $10^{-8}$ or $10^{-13}$, depending on the value of $\sigma.$

In the experiments, we fix the $\ell_1$-regularization parameter $\lambda=10^{-4}$ and vary $\sigma\in\{0, 10^{-8}, 10^{-4}\}$ to represent the general convex, ill-conditioned strongly convex, and well-conditioned strongly convex settings, respectively. For all the settings, we provide the comparison in terms of the average and last iterate\footnote{\textsc{spdhg} and \textsc{pure\_cd} provide no results for the average iterate in the nonsmooth and strongly convex setting, so we use simple uniform average for both of them.}. As can be observed from Fig.~\ref{fig:vrpda}, the average iterate yields much smoother curves, decreasing monotonically, and is generally more accurate than the last iterate. 
This is expected for the nonsmooth and general convex setting, as there are no theoretical guarantees for the last iterate, while for other cases the guarantee for the last iterate is on the distance to optimum, not the primal gap. 
As can be seen in Fig.~\ref{fig:vrpda}, the average iterate of \vrpda~is either competitive with or improves upon \textsc{spdhg} and \textsc{pure\_cd}.

\begin{figure*}[ht!]
\centering
\subfigure[\texttt{a9a}, $\sigma = 0,$ average]{\includegraphics[width=0.24\textwidth]{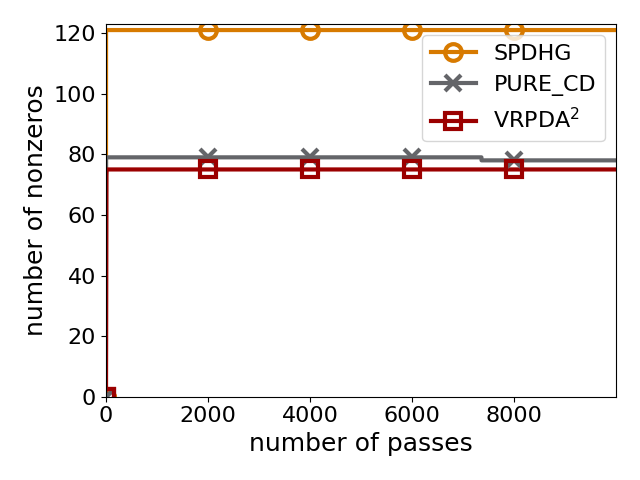}}
\subfigure[\texttt{a9a}, $\sigma = 0,$ last]{\includegraphics[width=0.24\textwidth]{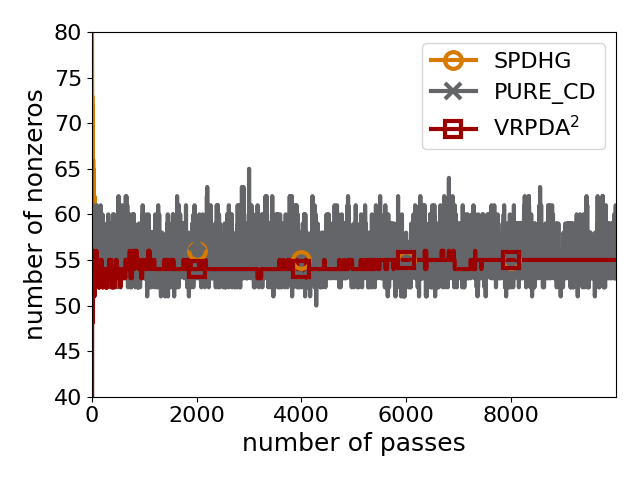}}
 \subfigure[\texttt{MNIST}, $\sigma = 0,$ average]{\includegraphics[width=0.24\textwidth]{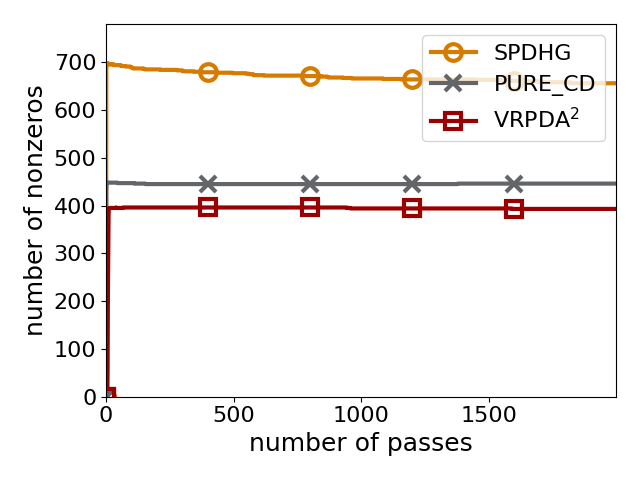}}
 \subfigure[\texttt{MNIST}, $\sigma = 0,$ last]{\includegraphics[width=0.24\textwidth]{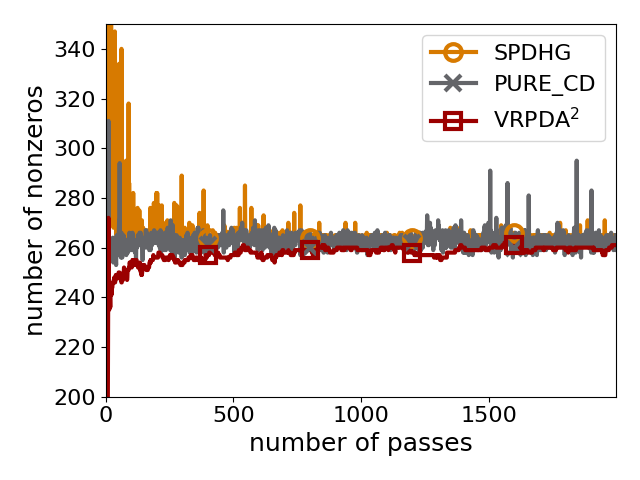}}
\subfigure[\texttt{a9a}, $\sigma = 10^{-8},$  average]{\includegraphics[width=0.24\textwidth]{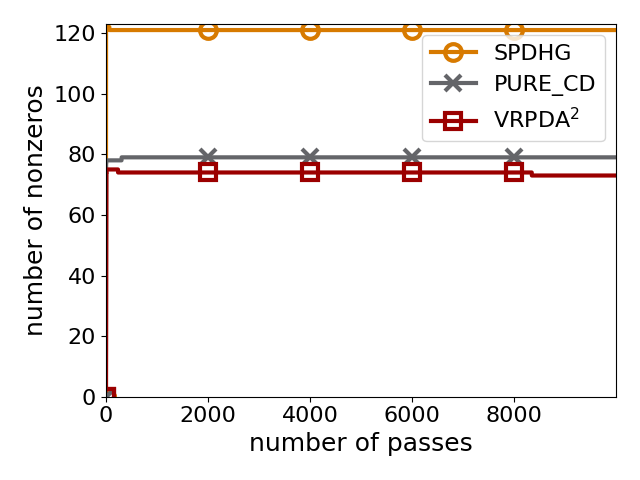}}
\subfigure[\texttt{a9a}, $\sigma = 10^{-8},$  last]{\includegraphics[width=0.24\textwidth]{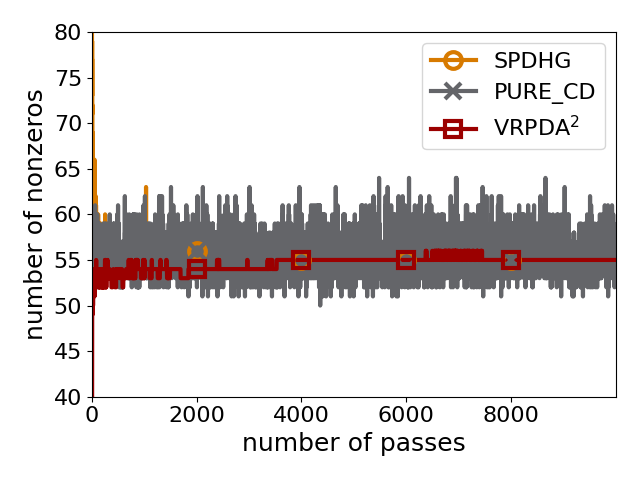}}
\subfigure[\texttt{MNIST}, $\sigma = 10^{-8},$ average]{\includegraphics[width=0.24\textwidth]{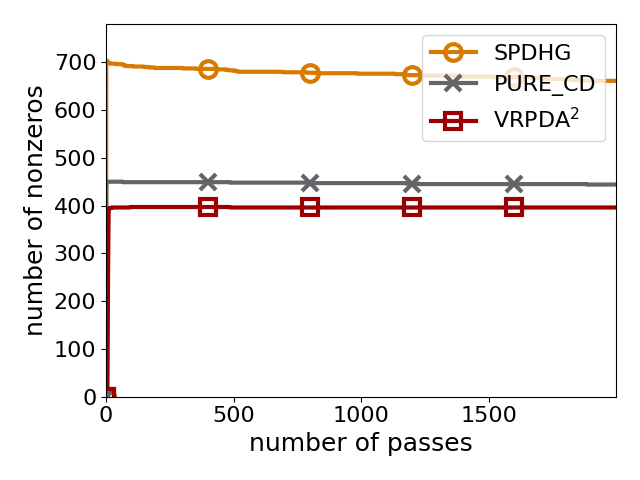}}
 \subfigure[\texttt{MNIST}, $\sigma = 10^{-8},$ last]{\includegraphics[width=0.24\textwidth]{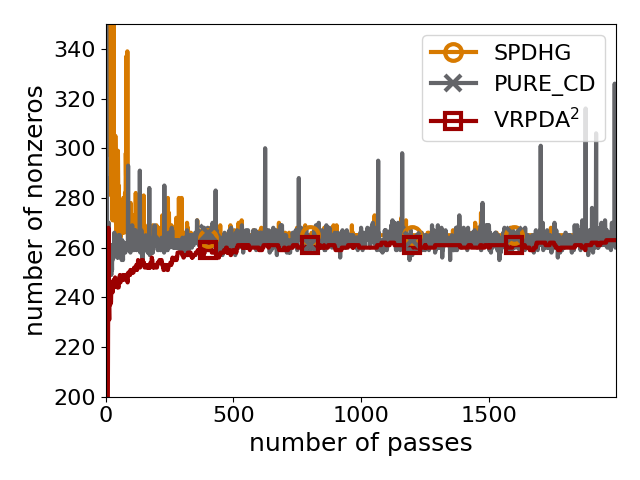}}
\subfigure[\texttt{a9a}, $\sigma = 10^{-4},$  average]{\includegraphics[width=0.24\textwidth]{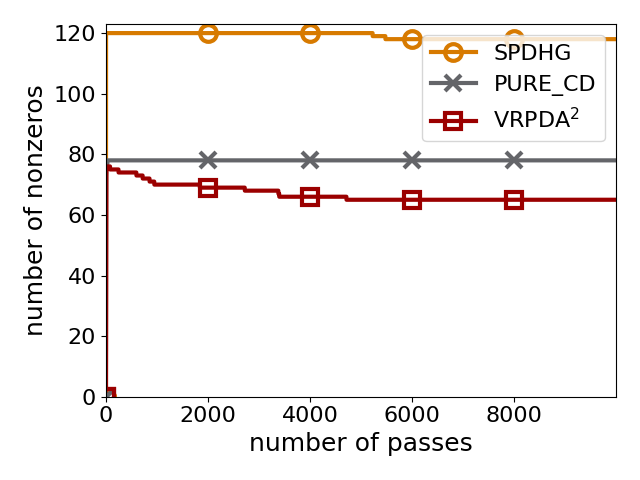}}
 \subfigure[\texttt{a9a}, $\sigma = 10^{-4},$  last]{\includegraphics[width=0.24\textwidth]{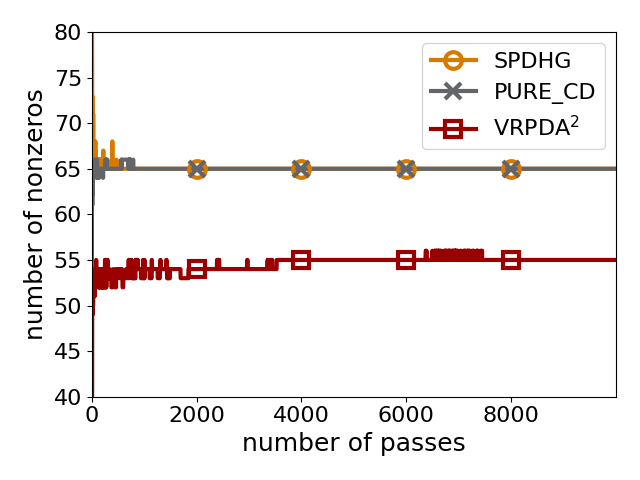}}
 \subfigure[\texttt{MNIST}, $\sigma = 10^{-4},$ average]{\includegraphics[width=0.24\textwidth]{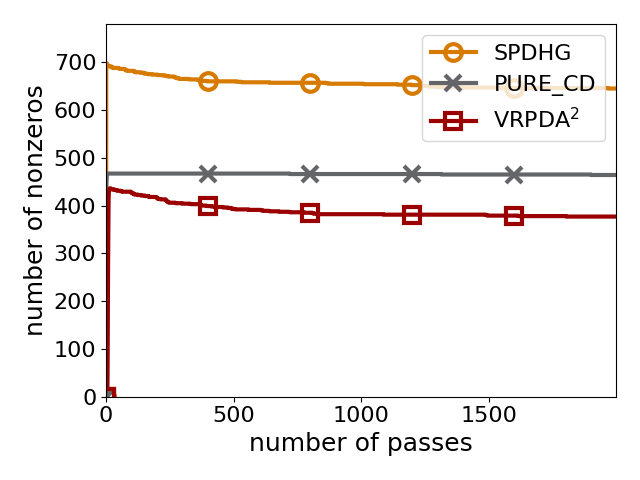}}
 \subfigure[\texttt{MNIST}, $\sigma = 10^{-4},$ last]{\includegraphics[width=0.24\textwidth]{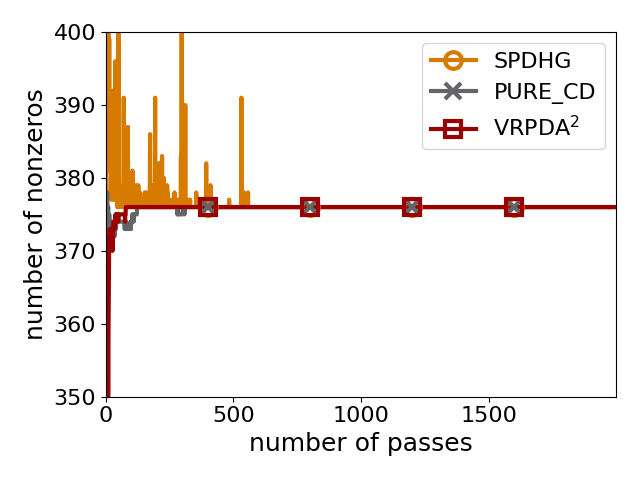}}
\caption{
Comparison of sparsity for \vrpda, \textsc{spdhg}, and \textsc{pure\_cd} run for the elastic net-regularized SVM problem, on \texttt{a9a} and \texttt{MNIST} datasets. In all the plots, $\sigma$ is the strong convexity parameter of the regularizer $\ell$; ``last'' refers to the last iterate, ``average'' to the average iterate. For all problem instances, \vrpda~generally constructs the sparsest solutions out of the three algorithms. (The number of nonzeros is computed by counting the elements with absolute value larger than $10^{-7}$.)
}
\label{fig:vrpda-sparsity}
\end{figure*}

As can be observed from Figure \ref{fig:vrpda}, there is a noticeable difference in the performance of all the algorithms when their function value gap is evaluated at the average iterate versus the last iterate. For \vrpda~that is a dual averaging-style method with the role of promoting sparsity \cite{xiao2010dual}, this difference comes from the significantly different sparsity of the average iterate and last iterate: as shown in Figure \ref{fig:vrpda-sparsity}, average iterate is less sparse but provides more accurate fit, while the last iterate is sparser (and thus more robust) but less accurate. For \spdhg, the last iterate is significantly more accurate than the average iterate in the strongly convex settings (i.e., for $\sigma\in\{10^{-8}, 10^{-4}\}$), because simple uniform average we use may not be the best choice for the two settings.\footnote{However, as we mentioned in the main body, \spdhg~\citep{chambolle2018stochastic} provides no results for the average iterate in the strongly convex setting.} Meanwhile, compared with \vrpda, the better performance of \spdhg~ in terms of the last iterate is partly due to the fact that it is a mirror descent-style algorithm with less sparse last iterate. In our experiments, the \pure~algorithm is always worse than \vrpda~and \spdhg, which is partly consistent with its worse convergence guarantee as shown in Table \ref{tb:result}. However, as \pure~is particularly designed for sparse datasets, it may have a better runtime performance  for sparse datasets (as shown in \citet{alacaoglu2020random}), which is beyond the scope of this paper.

Meanwhile,  the performance of the average iterate of \vrpda~and the last iterate of \spdhg~is almost the same (both figures for the average iterate and the last iterate under the same setting use the same scale), which is surprising as \vrpda~has $n$-times better theoretical guarantees than \spdhg~for small $\epsilon$. The better theoretical guarantees of \vrpda~comes from the particular  initialization strategy inspired by \citet{song2020variance}. However, similar to the experimental results in  \citet{song2020variance}, we do not observe the performance gain of the initialization strategy in practice (despite the fact that it does not worsen the empirical performance). Thus, it is of interest to explore whether the initialization strategy is essential for the improved algorithm performance or %
if it is only needed for the theoretical argument to go through.

Finally, as most papers do, we chose not to plot $\|\vx_k - \vx^*\|$ as it would require estimating $\vx^*,$ which is much harder than estimating $f^*.$ In particular, estimating $\vx^*$ is out of reach for problems that are not strongly convex even if $f$ were smooth, due to known lower bounds (see, e.g., \citet[Theorem~2.1.7]{nesterov2018lectures}). For strongly convex problems, we can obtain an estimate $\vxt^*$ of $\vx^*$ from the function value gap $f(\vxt^*) - f(\vx^*)$ using strong convexity of $f,$ as in this case $\|\vxt^* - \vx^*\| \leq \sqrt{\frac{2}{\sigma}(f(\vxt^*) - f(\vx^*))}$. However, obtaining error $\delta = \|\vxt^* - \vx^*\|$ from the function value gap would require $f(\vxt^*) - f(\vx^*) \leq \frac{\sigma}{2}\delta^2,$ which even for $\delta = 10^{-8}$ and the ``well-conditioned'' setting of our experiments with $\sigma = 10^{-4}$ would require minimizing $f$ to error $10^{-20}$, which becomes computationally prohibitive.

\vspace{-3mm}

\section{Discussion}\label{sec:discussion}
\vspace{-2mm}
We introduced a novel \vrpda~algorithm for structured nonsmooth ERM problems that are common in machine learning. \vrpda~leverages the separable structure of common ERM problems to attain improved convergence compared to the state-of-the-art algorithms, both in theory and  practice, even improving upon the lower bounds for (general, non-structured) composite optimization. 
It is an open question to obtain tighter lower bounds for the structured ERM setting to which \vrpda~applies, possibly certifying its optimality, at least for %
small target error $\epsilon.$

\bibliography{ref_icml.bib}
\ificml
\balance
\bibliographystyle{icml2021}
\else
\bibliographystyle{plainnat}
\fi
\clearpage
\onecolumn
\appendix

\section{Omitted Proofs from Section~\ref{sec:pdada} }\label{appx:omitted-proofs-general}

We start by proving two auxiliary lemmas that bound the growth of the estimate sequences $\phi_k(\vx_k)$ and $\psi_k(\vy_k)$ above and below, respectively. 

\begin{lemma}\label{lem:psi-phi-upper-general}
In Algorithm \ref{alg:pda}, $\forall (\vu,\vv)\in %
\gX\times \gY$
and $k\ge 1,$ we have 
\begin{eqnarray}
\psi_k(\vy_k)&\le& A_k g^*(\vv)  +  \frac{1}{2}\|\vv - \vy_0\|^2 - \frac{1+ \gamma A_k}{2}\|\vv - \vy_k\|^2 \notag\\
\phi_k(\vx_k)&\le& A_k \ell(\vu) + \frac{1}{2}\|\vu - \vx_0\|^2 - \frac{1+ \sigma A_k}{2}\|\vu - \vx_k\|^2. \notag
\end{eqnarray}
\end{lemma}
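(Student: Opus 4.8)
The plan is to prove the two inequalities in parallel, since they are structurally identical (one for the dual estimate sequence $\psi_k$, one for the primal estimate sequence $\phi_k$), and to proceed by induction on $k$. I will present the argument for $\psi_k(\vy_k)$; the one for $\phi_k(\vx_k)$ is obtained by the obvious symmetry (swap $\vy \leftrightarrow \vx$, $g^* \leftrightarrow \ell$, $\gamma \leftrightarrow \sigma$, $-\mB\bar\vx_{k-1} \leftrightarrow \mB^T\vy_k$, and $\vv \leftrightarrow \vu$). The key structural fact I would use is that $\psi_k$ is, by its recursive definition in Algorithm~\ref{alg:pda}, a sum of a $1$-strongly convex quadratic $\frac12\|\cdot - \vy_0\|^2$ and $\sum_{i=1}^k a_i(\langle -\mB\bar\vx_{i-1}, \cdot - \vv\rangle + g^*(\cdot))$, so that $\psi_k$ is $(1 + \gamma A_k)$-strongly convex (using that $g^*$ is $\gamma$-strongly convex and $\sum_{i=1}^k a_i = A_k$), and $\vy_k$ is its exact minimizer.

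First I would record the base case. Since $a_0 = A_0 = 0$ and $\psi_0(\vy) = \frac12\|\vy - \vy_0\|^2$ with minimizer $\vy_0$, the claimed inequality at $k=0$ reads $0 \le 0 + 0 - \frac12\|\vv - \vy_0\|^2 + \frac12\|\vv-\vy_0\|^2$… but actually the statement is only claimed for $k\ge1$, so I would instead just verify $k=1$ directly, or — cleaner — set up the induction so that the step from $k-1$ to $k$ works starting from the trivially true identity $\psi_0(\vy_0) = 0$ and $A_0 = 0$. For the inductive step, write $\psi_k(\vy) = \psi_{k-1}(\vy) + a_k(\langle -\mB\bar\vx_{k-1}, \vy - \vv\rangle + g^*(\vy))$. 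Evaluate at $\vv$: by strong convexity of $\psi_{k-1}$ around its minimizer $\vy_{k-1}$,
\[
\psi_{k-1}(\vv) \ge \psi_{k-1}(\vy_{k-1}) + \frac{1+\gamma A_{k-1}}{2}\|\vv - \vy_{k-1}\|^2,
\]
and by the inductive hypothesis applied in the other direction — actually I need the hypothesis in the stated (upper bound) form to produce the $A_k g^*(\vv)$ term. The cleaner route: use that $\vy_k$ minimizes the $(1+\gamma A_k)$-strongly convex function $\psi_k$, so for any $\vv$,
\[
\psi_k(\vy_k) \le \psi_k(\vv) - \frac{1+\gamma A_k}{2}\|\vv - \vy_k\|^2.
\]
Then expand $\psi_k(\vv) = \psi_{k-1}(\vv) + a_k g^*(\vv)$ (the bilinear term vanishes at $\vy = \vv$!), and bound $\psi_{k-1}(\vv)$ by the inductive hypothesis: $\psi_{k-1}(\vv) \le A_{k-1}g^*(\vv) + \frac12\|\vv-\vy_0\|^2 - \frac{1+\gamma A_{k-1}}{2}\|\vv - \vy_{k-1}\|^2$. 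Wait — the inductive hypothesis bounds $\psi_{k-1}(\vy_{k-1})$, not $\psi_{k-1}(\vv)$.

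So the correct mechanism is: minimality of $\vy_k$ gives $\psi_k(\vy_k) \le \psi_k(\vv) - \frac{1+\gamma A_k}{2}\|\vv - \vy_k\|^2$ for all $\vv$; but this is circular unless I telescope. The honest plan is a genuine induction on the quantity $\psi_k(\vy_k)$: assume $\psi_{k-1}(\vy_{k-1}) \le A_{k-1}g^*(\vv') + \frac12\|\vv'-\vy_0\|^2 - \frac{1+\gamma A_{k-1}}{2}\|\vv'-\vy_{k-1}\|^2$ for all $\vv'\in\gY$. Apply minimality of $\vy_k$ for $\psi_k$ at the point $\vy_{k-1}$ to get $\psi_k(\vy_k) \le \psi_k(\vy_{k-1}) - \frac{1+\gamma A_k}{2}\|\vy_{k-1} - \vy_k\|^2 = \psi_{k-1}(\vy_{k-1}) + a_k(\langle-\mB\bar\vx_{k-1},\vy_{k-1}-\vv\rangle + g^*(\vy_{k-1})) - \frac{1+\gamma A_k}{2}\|\vy_{k-1}-\vy_k\|^2$. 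Now apply the inductive hypothesis with $\vv' = \vv$, and bound $g^*(\vy_{k-1})$ using strong convexity: $g^*(\vy_{k-1}) \le g^*(\vv) + \langle$ subgradient $, \vy_{k-1} - \vv\rangle - \frac\gamma2\|\vy_{k-1}-\vv\|^2$. Combining and collecting the $g^*(\vv)$ coefficients gives $(A_{k-1}+a_k)g^*(\vv) = A_k g^*(\vv)$; the remaining terms are all quadratics in $\vv, \vy_{k-1}, \vy_k$, and the first-order optimality condition for $\vy_k$ in $\psi_k$ relates the linear-in-$\vv$ residual to $\vy_k - \vy_{k-1}$. The main obstacle is precisely this last bookkeeping: showing that after substituting the optimality condition for $\vy_k$, the leftover quadratic terms $\frac12\|\vv-\vy_0\|^2 - \frac{1+\gamma A_{k-1}}{2}\|\vv-\vy_{k-1}\|^2 - \frac{1+\gamma A_k}{2}\|\vy_{k-1}-\vy_k\|^2$ plus the cross terms collapse to exactly $\frac12\|\vv-\vy_0\|^2 - \frac{1+\gamma A_k}{2}\|\vv-\vy_k\|^2$. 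This is a completion-of-squares identity that should hold exactly (not as an inequality) because $\psi_k$ is an honest quadratic-plus-$g^*$ and $\vy_k$ its exact argmin; I would verify it by writing $\psi_k(\vv) - \psi_k(\vy_k) = \frac{1+\gamma A_k}{2}\|\vv - \vy_k\|^2$ (exact, since $\psi_k - \gamma A_k(\text{the }g^*\text{ part})$… more carefully, since the $g^*$ contribution to $\psi_k$ is what carries the $\gamma A_k$ strong convexity and $\vy_k$ is the minimizer, the second-order Taylor expansion of $\psi_k$ around $\vy_k$ is at least $\frac{1+\gamma A_k}{2}\|\cdot - \vy_k\|^2$, and evaluating at $\vv$ and then re-expanding $\psi_k(\vv)$ via the recursion and the inductive hypothesis closes the loop). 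Once the dual inequality is established, the primal one follows by the symmetric argument with $\sigma$ in place of $\gamma$, completing the proof.
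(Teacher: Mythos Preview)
Your ``cleaner route'' is correct and you abandoned it too quickly. There is no circularity: since the bilinear term in $\psi_k$ is $\langle -\mB\bar\vx_{i-1}, \vy - \vv\rangle$, it vanishes identically at $\vy = \vv$, so one computes \emph{exactly}
\[
\psi_k(\vv) = \psi_0(\vv) + \sum_{i=1}^k a_i\, g^*(\vv) = \tfrac12\|\vv-\vy_0\|^2 + A_k g^*(\vv),
\]
with no induction or telescoping beyond summing $a_i$. Then $(1+\gamma A_k)$-strong convexity of $\psi_k$ and $\vy_k = \argmin \psi_k$ give $\psi_k(\vy_k) \le \psi_k(\vv) - \tfrac{1+\gamma A_k}{2}\|\vv-\vy_k\|^2$, which is the claim. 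This two-line argument is actually slicker than the paper's proof.

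The paper instead writes out the first-order optimality condition for $\vy_k$ (a subgradient $\vg_{g^*}(\vy_k)$ with $\sum_i a_i(-\mB\bar\vx_{i-1}+\vg_{g^*}(\vy_k)) + (\vy_k-\vy_0)=\vzero$), substitutes it back into $\psi_k(\vy_k)$ to eliminate the bilinear terms, applies $\gamma$-strong convexity of $g^*$ at $\vy_k$ to bound $g^*(\vy_k)+\langle \vg_{g^*}(\vy_k),\vv-\vy_k\rangle \le g^*(\vv)-\tfrac{\gamma}{2}\|\vv-\vy_k\|^2$, and uses the three-point identity $\langle \vy_k-\vy_0,\vv-\vy_k\rangle + \tfrac12\|\vy_k-\vy_0\|^2 = \tfrac12\|\vv-\vy_0\|^2 - \tfrac12\|\vv-\vy_k\|^2$. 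Both approaches are direct (non-inductive); yours exploits the fact that the estimate sequence is written relative to $\vv$, which the paper does not use explicitly.

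The inductive ``honest plan'' you eventually commit to is the wrong turn. It introduces $\vy_{k-1}$ and then requires you to relate a subgradient of $g^*$ at $\vy_{k-1}$, the optimality condition for $\vy_k$, and three separate quadratics --- bookkeeping you yourself flag as the main obstacle and do not complete. Your closing remark that $\psi_k(\vv)-\psi_k(\vy_k)$ equals $\tfrac{1+\gamma A_k}{2}\|\vv-\vy_k\|^2$ exactly is also incorrect (this is only an inequality, since $g^*$ need not be quadratic). Go back to the direct argument.
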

\begin{proof}
By the definition of $\psi_k(\vy)$ in Algorithm \ref{alg:pda}, it follows that, $\forall k\ge 1,$ 
\begin{equation*}
\psi_k(\vy) = \sum_{i=1}^k a_i(\langle -\mB \bar{\vx}_{i-1}, \vy - \vv\rangle + g^*(\vy) ) + \frac{1}{2}\|\vy - \vy_0\|^2.   \nonumber
\end{equation*}

As, by definition, $\vy_k = \argmin_{\vy \in \sR^n}\psi_k(\vy)$ (see Algorithm \ref{alg:pda}; observe that, by definition of $\psi_k$, it must be $\vy_k \in \gY$), it follows that there exists $\vg_{g^*}(\vy_k) \in \partial g^*(\vy_k)$ %
such that
\begin{equation}\notag
\sum_{i=1}^k a_i( -\mB \bar{\vx}_{i-1} + \vg_{g^*}(\vy_k))+ (\vy_k - \vy_0) = \vzero.
\end{equation}
Thus, for any $\vv\in \sR^n,$ we have that, $\forall k \ge 1$,%
\begin{align*}
\psi_k(\vy_k) &=  \sum_{i=1}^k a_i(\langle -\mB \bar{\vx}_{i-1}, \vy_k - \vv\rangle + g^*(\vy_k) ) + \frac{1}{2}\|\vy_k - \vy_0\|^2 \\ 
&= \sum_{i=1}^k a_i (\langle \vg_{g^*}(\vy_k), \vv - \vy_k\rangle + g^*(\vy_k)) + \langle \vy_k - \vy_0, \vv-\vy_k\rangle + \frac{1}{2}\|\vy_k - \vy_0\|^2  \\
&= A_k (\langle \vg_{g^*}(\vy_k), \vv - \vy_k\rangle + g^*(\vy_k)) + \langle \vy_k - \vy_0, \vv-\vy_k\rangle + \frac{1}{2}\|\vy_k - \vy_0\|^2.  %
\end{align*}
As $g^*$ is assumed to be $\gamma$-strongly convex, we have that %
$\langle \vg_{g^*}(\vy_k), \vv - \vy_k\rangle + g^*(\vy_k) \leq g(\vv) - \frac{\gamma}{2}\|\vv - \vy_k\|^2.$ Thus, using that $\langle \vy_k - \vy_0, \vv-\vy_k\rangle + \frac{1}{2}\|\vy_k - \vy_0\|^2 = \frac{1}{2}\|\vv - \vy_0\|^2 - \frac{1}{2}\|\vv - \vy_k\|^2,$ we further have
\begin{align*}
\psi_k(\vy_k) &\le A_k\Big(g^*(\vv) - \frac{\gamma}{2}\|\vv - \vy_k\|^2\Big) + \frac{1}{2}\|\vv - \vy_0\|^2 - \frac{1}{2}\|\vv - \vy_k\|^2 \\
&= A_k g^*(\vv)  +  \frac{1}{2}\|\vv - \vy_0\|^2 - \frac{1+ \gamma A_k}{2}\|\vv - \vy_k\|^2,   %
\end{align*}
as claimed.

Bounding $\phi_k(\vx_k)$ can be done using the same sequence of arguments and is thus omitted. 
\end{proof}

\begin{lemma}\label{lem:psi-phi-lower-general}
In Algorithm \ref{alg:pda},  we have: $\forall (\vu,\vv)\in 
{\gX \times \gY}$
and $k\ge 1,$ 
\begin{eqnarray}
\psi_k(\vy_k) &\ge&  \psi_{k-1}(\vy_{k-1})  +  \frac{1+\gamma A_{k-1}}{2}\|\vy_k - \vy_{k-1}\|^2  + a_k \big(g^*(\vy_k) + \langle - \mB \vx_k, \vy_k - \vv\rangle\big) \nonumber  \\
&& + a_k\langle \mB(\vx_k - \vx_{k-1}), \vy_k - \vv\rangle - a_{k-1}   \langle \mB(\vx_{k-1} - \vx_{k-2}), \vy_{k-1}  - \vv \rangle  \nonumber   \\
&&-a_{k-1}\langle \mB(\vx_{k-1} - \vx_{k-2}), \vy_k - \vy_{k-1}\rangle, \notag\\
\phi_k(\vx_k) &\ge& \phi_{k-1}(\vx_{k-1}) + \frac{1+\sigma A_{k-1}}{2}\|\vx_k - \vx_{k-1}\|^2 + a_k\big(\langle \vx_k - \vu, \mB^T \vy_k\rangle + \ell(\vx_k)\big).  \notag%
\end{eqnarray}
\end{lemma}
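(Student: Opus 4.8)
The plan is to obtain both inequalities from two ingredients: (i) the strong convexity of the running estimate sequences together with the fact that $\vx_{k-1}$ and $\vy_{k-1}$ are their exact minimizers, and (ii) a purely algebraic rewriting of the extrapolated bilinear term appearing in the definition of $\psi_k$. Throughout, I would use that, by unrolling the recursions in Algorithm~\ref{alg:pda}, $\phi_{k-1}(\vx) = \frac12\|\vx - \vx_0\|^2 + \sum_{i=1}^{k-1} a_i(\langle \vx - \vu, \mB^T\vy_i\rangle + \ell(\vx))$ and $\psi_{k-1}(\vy) = \frac12\|\vy - \vy_0\|^2 + \sum_{i=1}^{k-1} a_i(\langle -\mB\bar\vx_{i-1}, \vy - \vv\rangle + g^*(\vy))$, so that $\phi_{k-1}$ is $(1+\sigma A_{k-1})$-strongly convex and $\psi_{k-1}$ is $(1+\gamma A_{k-1})$-strongly convex (the quadratic contributes modulus $1$, the terms $A_{k-1}\ell$ and $A_{k-1}g^*$ contribute $\sigma A_{k-1}$ and $\gamma A_{k-1}$, and the linear terms contribute nothing).

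For the $\phi_k$ bound, since $\vx_{k-1} = \argmin_\vx \phi_{k-1}(\vx)$ gives $\vzero \in \partial\phi_{k-1}(\vx_{k-1})$, strong convexity applied with subgradient $\vzero$ at comparison point $\vx_k$ yields $\phi_{k-1}(\vx_k) \geq \phi_{k-1}(\vx_{k-1}) + \frac{1+\sigma A_{k-1}}{2}\|\vx_k - \vx_{k-1}\|^2$. Substituting this into $\phi_k(\vx_k) = \phi_{k-1}(\vx_k) + a_k(\langle \vx_k - \vu, \mB^T\vy_k\rangle + \ell(\vx_k))$ produces the claimed lower bound on $\phi_k(\vx_k)$ directly.

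For the $\psi_k$ bound I would follow the same template for the strong-convexity step, obtaining $\psi_k(\vy_k) \geq \psi_{k-1}(\vy_{k-1}) + \frac{1+\gamma A_{k-1}}{2}\|\vy_k - \vy_{k-1}\|^2 + a_k g^*(\vy_k) + a_k\langle -\mB\bar\vx_{k-1}, \vy_k - \vv\rangle$, and then do the bookkeeping that converts the term involving $\bar\vx_{k-1}$ into one involving $\vx_k$. Using $\bar\vx_{k-1} = \vx_{k-1} + \frac{a_{k-1}}{a_k}(\vx_{k-1} - \vx_{k-2})$, I would write $a_k\langle -\mB\bar\vx_{k-1}, \vy_k - \vv\rangle = a_k\langle -\mB\vx_{k-1}, \vy_k - \vv\rangle - a_{k-1}\langle \mB(\vx_{k-1}-\vx_{k-2}), \vy_k - \vv\rangle$, then split further as $a_k\langle -\mB\vx_{k-1},\vy_k-\vv\rangle = a_k\langle -\mB\vx_k,\vy_k-\vv\rangle + a_k\langle\mB(\vx_k-\vx_{k-1}),\vy_k-\vv\rangle$ and $a_{k-1}\langle\mB(\vx_{k-1}-\vx_{k-2}),\vy_k-\vv\rangle = a_{k-1}\langle\mB(\vx_{k-1}-\vx_{k-2}),\vy_{k-1}-\vv\rangle + a_{k-1}\langle\mB(\vx_{k-1}-\vx_{k-2}),\vy_k-\vy_{k-1}\rangle$. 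Collecting terms reproduces exactly the five bilinear ``telescoping'' correction terms in the statement.

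The argument is essentially bookkeeping, so the only point requiring care is the handling of subgradients when $g^*$ and $\ell$ are extended-real-valued: one must note that the minimizer of a proper, l.s.c., strongly convex function satisfies the zero-subgradient optimality condition and that the strong-convexity inequality then holds at any comparison point (here $\vx_k$, resp.\ $\vy_k$), which are in the relevant effective domains by construction of $\phi_{k-1}$ and $\psi_{k-1}$. The main (mild) obstacle is simply organizing the bilinear cross-terms so that the ones destined to cancel across iterations (the $a_k\langle\mB(\vx_k-\vx_{k-1}),\cdot\rangle$ and $a_{k-1}\langle\mB(\vx_{k-1}-\vx_{k-2}),\cdot\rangle$ pairs) are displayed in the same form at consecutive indices, which is precisely what the stated grouping accomplishes and what makes the subsequent telescoping in the proof of Theorem~\ref{thm:general} work.
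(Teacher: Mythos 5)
Your proposal is correct and follows essentially the same route as the paper's proof: lower-bound $\psi_{k-1}(\vy_k)$ (resp.\ $\phi_{k-1}(\vx_k)$) via the $(1+\gamma A_{k-1})$- (resp.\ $(1+\sigma A_{k-1})$-) strong convexity of the estimate sequence at its minimizer, then expand $a_k\langle -\mB\bar\vx_{k-1}, \vy_k - \vv\rangle$ using the definition of $\bar\vx_{k-1}$ and the splittings $-\vx_{k-1} = -\vx_k + (\vx_k - \vx_{k-1})$ and $\vy_k - \vv = (\vy_{k-1} - \vv) + (\vy_k - \vy_{k-1})$, which is exactly the decomposition in Eq.~\eqref{eq:psi-phi}. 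The remark about zero-subgradient optimality for proper l.s.c.\ strongly convex functions is a valid (if implicit in the paper) justification of the strong-convexity step.
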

\begin{proof}
By the definition of $\psi_k(\vy_k)$, the  fact that $\vy_{k-1}$ is optimal for $\psi_{k-1}$, and the $1+\gamma A_{k-1}$-strong convexity of $\psi_{k-1}(\vy_{k-1})$, we have the following:  $\forall k \ge 1$,
\begin{eqnarray}
\psi_k(\vy_k) &=& \psi_{k-1}(\vy_k)  + a_k( \langle -\mB \bar{\vx}_{k-1}, \vy_k - \vv\rangle + g^*(\vy_k) ) \nonumber  \\
&\ge& \psi_{k-1}(\vy_{k-1}) + \frac{1+\gamma A_{k-1}}{2}\|\vy_k - \vy_{k-1}\|^2 + a_k( \langle - \mB \bar{\vx}_{k-1}, \vy_k - \vv\rangle + g^*(\vy_k) ).  \label{eq:psi-1}
\end{eqnarray}
On the other hand, by the definition of $\bar{\vx}_{k-1}$, we also have 
\begin{align}
\langle - \mB \bar{\vx}_{k-1}, \vy_k - \vv\rangle 
=\;& \langle - \mB (\bar{\vx}_{k-1} - \vx_k), \vy_k - \vv\rangle + \langle - \mB \vx_k, \vy_k - \vv\rangle  \nonumber\\
=\;& \langle \mB(\vx_k - \vx_{k-1}), \vy_k - \vv \rangle -  \frac{a_{k-1}}{a_k} \langle \mB(\vx_{k-1} - \vx_{k-2}), \vy_k - \vv \rangle +  \langle - \mB \vx_k, \vy_k - \vv\rangle \nonumber\\
=\;& \langle \mB(\vx_k - \vx_{k-1}), \vy_k - \vv\rangle -  \frac{a_{k-1}}{a_k}   \langle \mB(\vx_{k-1} - \vx_{k-2}), \vy_{k-1}  - \vv \rangle \nonumber \\
&-\frac{a_{k-1}}{a_k}  \langle \mB(\vx_{k-1} - \vx_{k-2}), \vy_k - \vy_{k-1}\rangle +  \langle - \mB \vx_k, \vy_k - \vv\rangle. \label{eq:psi-phi}
\end{align} 
Hence, combining Eqs.~\eqref{eq:psi-1} and \eqref{eq:psi-phi}, we have 
\begin{eqnarray}
\psi_k(\vy_k) &\ge&  \psi_{k-1}(\vy_{k-1})  +  \frac{1+\gamma A_{k-1}}{2}\|\vy_k - \vy_{k-1}\|^2  + a_k g^*(\vy_k)   \nonumber\\
&& + a_k\langle \mB(\vx_k - \vx_{k-1}), \vy_k - \vv\rangle - a_{k-1}   \langle \mB(\vx_{k-1} - \vx_{k-2}), \vy_{k-1}  - \vv \rangle \nonumber \\
&&-a_{k-1}\langle \mB(\vx_{k-1} - \vx_{k-2}), \vy_k - \vy_{k-1}\rangle + a_k \langle - \mB \vx_k, \vy_k - \vv\rangle. \notag
\end{eqnarray}

Similarly, by the definition of $\phi_k,$ the $(1+\sigma A_{k-1})$-strong convexity of $\phi_{k-1}$, and $\vx_{k-1} = \argmin_{\vx \in \sR^d}\phi_{k-1}(\vx)$,  we have $\forall k \ge 1$ that
\begin{align*}
\phi_k(\vx_k) & = \phi_{k-1}(\vx_k) + a_k (\langle \vx_k - \vu, \mB^T \vy_k\rangle + \ell(\vx_k)) \\
& \ge \phi_{k-1}(\vx_{k-1}) + \frac{1+\sigma A_{k-1}}{2}\|\vx_k - \vx_{k-1}\|^2 + a_k(\langle \vx_k - \vu, \mB^T \vy_k\rangle + \ell(\vx_k)), \notag%
\end{align*}
completing the proof. 
\end{proof}

We are now ready to prove Theorem~\ref{thm:general}. 
\thmgeneral*
\begin{proof}
Applying Lemma~\ref{lem:psi-phi-lower-general}, we have 
\begin{equation} \label{eq:phi-psi-2}
\begin{aligned}
\psi_k(\vy_k) + \phi_k(\vx_k) 
 \ge\; &  \psi_{k-1}(\vy_{k-1}) +  \phi_{k-1}(\vx_{k-1})\\
 &+ a_k \langle \mB(\vx_k - \vx_{k-1}), \vy_k - \vv \rangle - a_{k-1}\langle \mB(\vx_{k-1} - \vx_{k-2}), \vy_{k-1}  - \vv \rangle     \\ 
&+  \bigg[\frac{1+\gamma A_{k-1}}{2}\|\vy_k - \vy_{k-1}\|^2 + \frac{1+\sigma A_{k-1}}{2}\|\vx_k - \vx_{k-1}\|^2 \\
&- a_{k-1}\langle \mB(\vx_{k-1} - \vx_{k-2}), \vy_k - \vy_{k-1}\rangle\bigg]_1 \\
& + \big[a_k (g^*(\vy_k) + \ell(\vx_k)  - \langle \mB \vu, \vy_k\rangle + \langle \vx_k, \mB^T \vv\rangle)\big]_2. 
\end{aligned} 
\end{equation}
The terms from the first line in this inequality give a recursive relationship for the sum of estimate sequences $\psi_k + \phi_k$. The terms from the second line telescope. Thus, we only need to focus on bounding the terms inside $[\cdot]_1$ and $[\cdot]_2$. 

Observe that, by the definition \eqref{eq:Guv} of $\Gap^{\vu, \vv}(\vx, \vy)$, we have   
\begin{equation}\label{eq:[.]_2}
    [\cdot]_2 = a_k \big(\Gap^{\vu, \vv}(\vx_k, \vy_k) + g^*(\vv) + \ell(\vu)\big).
\end{equation}

To bound $[\cdot]_1,$ observe first that
\begin{align*}
a_{k-1}\langle \mB(\vx_{k-1} - \vx_{k-2}), \vy_k - \vy_{k-1}\rangle %
&\le  a_{k-1}\|\mB\|\|\vx_{k-1} - \vx_{k-2}\| \| \vy_k - \vy_{k-1} \|  \\ 
 &\leq R a_{k-1}\|\vx_{k-1} - \vx_{k-2}\| \| \vy_k - \vy_{k-1} \|,
 \end{align*}
 where we have used Cauchy-Schwarz inequality, definition of the operator norm, and $\|\mB\| \leq R$ (which holds by Assumption~\ref{ass:general}). Applying Young's inequality, we have for all $k \ge 2$ that
 \begin{align}
\notag
a_{k-1}\langle \mB(\vx_{k-1} - \vx_{k-2}), \vy_k - \vy_{k-1}\rangle &\le \frac{R^2 {a_{k-1}}^2}{2(1+\gamma A_{k-1})}\|\vx_{k-1} - \vx_{k-2}\|^2 + \frac{1+\gamma A_{k-1}}{2}\|\vy_k - \vy_{k-1}\|^2   \\ 
\notag
&\le  \frac{R^2 {a_{k-1}}^2}{2(1+\gamma A_{k-2})}\|\vx_{k-1} - \vx_{k-2}\|^2 + \frac{1+\gamma A_{k-1}}{2}\|\vy_k - \vy_{k-1}\|^2\nonumber   \\ 
\notag 
&= \frac{1+\sigma A_{k-2}}{4}\|\vx_{k-1} - \vx_{k-2}\|^2 + \frac{1+\gamma A_{k-1}}{2}\|\vy_k - \vy_{k-1}\|^2, \\
\label{eq:K-1}
&\leq \frac{1+\sigma A_{k-2}}{2}\|\vx_{k-1} - \vx_{k-2}\|^2 + \frac{1+\gamma A_{k-1}}{2}\|\vy_k - \vy_{k-1}\|^2, 
\end{align}
where the second line is by $A_{k-1}\geq A_{k-2},$ $\forall k,$ and the third line is by the definition of $a_{k-1}$ in Algorithm~\ref{alg:pda}. Hence, we have for $k \ge 2$ that
\begin{equation}\label{eq:[.]_1}
    [\cdot]_1 \geq \frac{1+\sigma A_{k-1}}{2}\|\vx_k - \vx_{k-1}\|^2  - \frac{1+\sigma A_{k-2}}{2}\|\vx_{k-1} - \vx_{k-2}\|^2.
\end{equation}
For $k=1$, we have
\begin{align}
 [\cdot]_1 & = \frac{1 + \gamma A_0}{2} \|\vy_1 - \vy_0 \|^2 + \frac{1+\sigma A_0}{2} \|\vx_1 - \vx_0 \|^2  - a_0 \langle \mB (\vx_0 - \vx_{-1}), \vy_1-\vy_0 \rangle \notag \\
 & = \frac12 \|\vy_1 - \vy_0 \|^2 + \frac12 \| \vx_1 - \vx_0 \|^2. \label{eq:si9}
\end{align}
When we sum $[\cdot]_1$ over $1,2,\dotsc,k$, we obtain from \eqref{eq:[.]_1} and \eqref{eq:si9}  that the telescoped sum is bounded below by
\begin{align}
\notag
   & \frac{1+\sigma A_{k-1}}{2} \| \vx_k - \vx_{k-1}\|^2 - \frac{1+\sigma A_0}{2} \| \vx_1 - \vx_0 \|^2 + \frac12 \|\vy_1 - \vy_0 \|^2 + \frac12 \| \vx_1 - \vx_0 \|^2  \\
    \label{eq:si8}
   & = \frac{1+\sigma A_{k-1}}{2} \| \vx_k - \vx_{k-1}\|^2 + \frac12 \| \vy_1 - \vy_0 \|^2.
\end{align}

Thus, by combining Eqs.~\eqref{eq:phi-psi-2}--\eqref{eq:[.]_2}, telescoping from $1$ to $k$, and using \eqref{eq:si8}, we have
\begin{align}
\psi_k(\vy_k) + \phi_k(\vx_k) &\ge \psi_{0}(\vy_{0}) +  \phi_{0}(\vx_{0})  + a_k \langle \mB(\vx_k - \vx_{k-1}), \vy_k - \vv \rangle - a_{0}\langle \mB(\vx_{0} - \vx_{-1}), \vy_{0}  - \vv \rangle  \nonumber   \\ 
& \quad + \frac{1+\sigma A_{k-1}}{2}\|\vx_k - \vx_{k-1}\|^2 + \frac12 \| \vy_1 - \vy_0 \|^2 \nonumber  \\
& \quad + \sum_{i=1}^k a_i\big(\Gap^{\vu, \vv}(\vx_i, \vy_i) + g^*(\vv) + \ell(\vu)\big) \notag\\
& \ge a_k \langle \mB(\vx_k - \vx_{k-1}), \vy_k - \vv \rangle + \frac{1+\sigma A_{k-1}}{2}\|\vx_k - \vx_{k-1}\|^2  \nonumber \\
& \quad + \sum_{i=1}^k a_i \Gap^{\vu, \vv}(\vx_i, \vy_i) + A_k(g^*(\vv) + \ell(\vu)), \label{eq:recur} %
\end{align}
where we have used $\phi_0(\vx_0) = \psi_0(\vy_0) = 0$ and $\vx_0 = \vx_{-1},$ which holds by assumption.
By rearranging Eq.~\eqref{eq:recur} and using the bounds on $\phi_k (\vx_k)$ and  $\psi_k(\vy_k)$ from Lemma~\ref{lem:psi-phi-upper-general}, we have
\begin{equation}\label{eq:gap-almost-there}
    \begin{aligned}
        \sum_{i=1}^k a_i \Gap^{\vu, \vv}(\vx_i, \vy_i) \leq \; & -a_k \langle \mB(\vx_k - \vx_{k-1}), \vy_k - \vv \rangle - \frac{1+\sigma A_{k-1}}{2}\|\vx_k - \vx_{k-1}\|^2 \\
        &+\frac{1}{2}\|\vv - \vy_0\|^2 - \frac{1+ \gamma A_k}{2}\|\vv - \vy_k\|^2  \\
&  + \frac{1}{2}\|\vu - \vx_0\|^2 - \frac{1+ \sigma A_k}{2}\|\vu - \vx_k\|^2.
    \end{aligned}
\end{equation}
By using the same sequence of arguments leading to Eq.~\eqref{eq:K-1}, but with $\vv$ replacing $\vy_{k-1}$ and $k+1$ replacing $k$, we have that the following bound holds for all $k \ge 1$:
\begin{align*}
    -a_k \langle \mB(\vx_k - \vx_{k-1}), \vy_k - \vv \rangle \leq \;& \frac{1+\sigma A_{k-1}}{2}\|\vx_{k} - \vx_{k-1}\|^2 + \frac{1+\gamma A_{k}}{4}\|\vy_k - \vv\|^2.
\end{align*}
By combining  with Eq.~\eqref{eq:gap-almost-there}, we obtain
\begin{equation}\label{eq:pda2-final-bnd}
\begin{aligned}
    \sum_{i=1}^k a_i \Gap^{\vu, \vv}(\vx_i, \vy_i) \leq \;& \frac12 \|\vu - \vx_0\|^2 + \frac12 \|\vv - \vy_0\|^2\\
    &- \frac{1 + \sigma A_k}{2}\|\vu - \vx_k\|^2 - \frac{1 + \gamma A_k}{4}\| \vv-\vy_k\|^2. 
\end{aligned}
\end{equation}

To complete bounding the primal-dual gap, it remains to observe that for any fixed $\vu, \vv,$ $\Gap^{\vu, \vv}(\vx, \vy)$ is separable and convex in $\vx, \vy.$ Thus, by the definition of $\vxt_k, \vyt_k$ and Jensen's inequality, we have that 
\[
\Gap^{\vu, \vv}(\vxt_k, \vyt_k) \leq \frac{1}{A_k}\sum_{i=1}^k a_i \Gap^{\vu, \vv}(\vx_i, \vy_i) \leq \frac{\|\vu - \vx_0\|^2 + \|\vv - \vy_0\|^2}{2A_k}.
\]

To bound $\|\vx_k - \vx^*\|^2 + \|\vy_k - \vy^*\|^2,$ note that for $(\vu, \vv) = (\vx^*, \vy^*)$ we must have $\Gap^{\vu, \vv}(\vx, \vy) \geq 0,$ for all $(\vx, \vy)$, since $(\vx^*, \vy^*)$ is a primal-dual solution. Thus, rearranging Eq.~\eqref{eq:pda2-final-bnd}, we arrive at the claimed bound
$$
    (1+\sigma A_k)\|\vx_k - \vx^*\|^2 + \frac{1+\gamma A_k}{2}\|\vy_k - \vy^*\|^2 \leq \|\vx_0 - \vx^*\|^2 + \|\vy_0 - \vy^*\|^2.
$$

To complete the proof, it remains to bound the growth of $A_k.$ We do this using the relationship $A_k - A_{k-1} = a_k = \frac{ \sqrt{(1+\sigma A_{k-1})(1+\gamma A_{k-1})}}{\sqrt{2}R}$. When either $\gamma = 0$ or $\sigma = 0,$ the bound follows by applying Lemma~\ref{lemma:es-seq-growth}. If $\sigma > 0$ and $\gamma > 0$, then we have $A_k - A_{k-1} = a_k \ge  \frac{\sqrt{\sigma\gamma}}{\sqrt{2}R}A_{k-1},$ which leads to 
$$A_k = A_1 \Big(1 +  \frac{\sqrt{\sigma\gamma}}{\sqrt{2}R}\Big)^{k-1} = \frac{1}{\sqrt{2}R}\Big(1 +  \frac{\sqrt{\sigma\gamma}}{\sqrt{2}R}\Big)^{k-1},$$
as claimed.
\end{proof}

\section{Omitted Proofs from Section~\ref{sec:vrpdada} }\label{appx:omitted-proofs-vr}

We start by showing the following identity for the initial estimate sequence $\psi_1(\vy_1) + \phi_1(\vx_1)$.
\begin{lemma} \label{lem:init}
For the initialization steps (Lines~2-7 of Algorithm \ref{alg:vrpda}), we have, for all $(\vu,\vv) \in \gX \times \gY$ that
\begin{align*}
    \psi_1(\vy_1) + {\phi}_1(\vx_1) =\;& \frac{n}{2}\|\vy_1 - \vy_0\|^2 + \frac{n}{2}\|\vx_1 - \vx_0\|^2\\
    &+ a_1 \big(\Gap^{\vu, \vv}(\vx_1, \vy_1) + g^*(\vv) + \ell(\vu) + \langle \vx_1 - \vx_0, \mB^T(\vy_1 - \vv)\rangle\big).
\end{align*}
\end{lemma}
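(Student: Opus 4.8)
The plan is to simply unfold the definitions from the initialization steps (Lines~2--7 of Algorithm~\ref{alg:vrpda}) and collect terms. The identity is purely algebraic: in contrast to the analogous estimates for the main loop, it needs neither the first-order optimality of $\vy_1,\vx_1$ nor any (strong) convexity of $g^*,\ell$ --- those properties enter only later when $\psi_k(\vy_k)$ and $\phi_k(\vx_k)$ are bounded from above and below.

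First I would expand $\psi_1(\vy_1) = n\tilde\psi_1(\vy_1)$ using $\tilde\psi_1(\vy) = \tfrac12\|\vy-\vy_0\|^2 + \tilde a_1\big(\langle -\mB\vx_0,\,\vy-\vv\rangle + g^*(\vy)\big)$ together with the relation $a_1 = n\tilde a_1$ from Line~7, which gives
\[
\psi_1(\vy_1) = \tfrac n2\|\vy_1-\vy_0\|^2 + a_1\big(\langle -\mB\vx_0,\,\vy_1-\vv\rangle + g^*(\vy_1)\big).
\]
Analogously, expanding $\phi_1(\vx_1) = n\tilde\phi_1(\vx_1)$ and substituting the definition $\vz_1 = \mB^T\vy_1$ from Line~5 yields
\[
\phi_1(\vx_1) = \tfrac n2\|\vx_1-\vx_0\|^2 + a_1\big(\langle \vx_1-\vu,\, \mB^T\vy_1\rangle + \ell(\vx_1)\big).
\]

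Next I would add these two identities. The quadratic terms already reproduce the right-hand side of the lemma, so it remains to verify that the bracketed linear-plus-function part,
\[
\langle -\mB\vx_0,\,\vy_1-\vv\rangle + g^*(\vy_1) + \langle \vx_1-\vu,\, \mB^T\vy_1\rangle + \ell(\vx_1),
\]
equals $\Gap^{\vu,\vv}(\vx_1,\vy_1) + g^*(\vv) + \ell(\vu) + \langle \vx_1-\vx_0,\, \mB^T(\vy_1-\vv)\rangle$. For this I would substitute $\Gap^{\vu,\vv}(\vx_1,\vy_1) = L(\vx_1,\vv) - L(\vu,\vy_1)$ with $L(\vx,\vy) = \langle\mB\vx,\vy\rangle - g^*(\vy) + \ell(\vx)$, so that $\Gap^{\vu,\vv}(\vx_1,\vy_1) + g^*(\vv) + \ell(\vu) = \langle\mB\vx_1,\vv\rangle + \ell(\vx_1) - \langle\mB\vu,\vy_1\rangle + g^*(\vy_1)$, and expand $\langle \vx_1-\vx_0,\, \mB^T(\vy_1-\vv)\rangle = \langle\mB\vx_1,\vy_1\rangle - \langle\mB\vx_1,\vv\rangle - \langle\mB\vx_0,\vy_1\rangle + \langle\mB\vx_0,\vv\rangle$ by bilinearity of $(\vx,\vy)\mapsto\langle\mB\vx,\vy\rangle$. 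Collecting the two sides and cancelling the pair $\pm\langle\mB\vx_1,\vv\rangle$, both reduce to $\ell(\vx_1) + g^*(\vy_1) - \langle\mB\vu,\vy_1\rangle + \langle\mB\vx_1,\vy_1\rangle - \langle\mB\vx_0,\vy_1\rangle + \langle\mB\vx_0,\vv\rangle$, which establishes the claim.

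There is essentially no obstacle here beyond careful bookkeeping of the bilinear terms $\langle\mB\vx_i,\vy_j\rangle,\ \langle\mB\vx_i,\vv\rangle,\ \langle\mB\vx_0,\vv\rangle$ and ensuring the factor $n$ is absorbed correctly through $a_1 = n\tilde a_1$. The one conceptual point worth emphasizing in the write-up is that this is an exact equality (not an inequality as in Lemmas~\ref{lem:psi-phi-upper-general}--\ref{lem:psi-phi-lower-general}), precisely because the initialization step does not invoke optimality or convexity; those are used only later in the analysis of the main loop.
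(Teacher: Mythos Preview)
Your proposal is correct and follows essentially the same route as the paper's proof: both simply unfold the definitions of $\tilde\psi_1,\tilde\phi_1$, multiply through by $n$ via $\psi_1=n\tilde\psi_1$, $\phi_1=n\tilde\phi_1$, $a_1=n\tilde a_1$, and then regroup the bilinear terms using the definition of $\Gap^{\vu,\vv}$. Your write-up is slightly more explicit in verifying the bilinear bookkeeping, and your observation that the identity uses neither optimality nor convexity is a helpful remark not emphasized in the paper.
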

\begin{proof}
By the definition of $\tilde{\psi}_1(\cdot)$ and $\tilde{\phi}_1(\cdot)$, we have
\begin{align*}
\tilde{\psi}_1(\vy_1) &=
\frac{1}{2}\|\vy_1 - \vy_0\|^2 +  \tilde{a}_1 (\langle - \mB \vx_0,\vy_1 - \vv\rangle +  g^*(\vy_1)), \\
\tilde{\phi}_1(\vx_1) &=
\frac{1}{2}\|\vx_1 - \vx_0\|^2 +  \tilde{a}_1 (\langle {\vx}_1 - \vu, \mB^T\vy_1\rangle+\ell(\vx_1)). 
\end{align*}
Thus, using the definition \eqref{eq:Guv} of $\Gap^{\vu,\vv}$, we have: 
\begin{align*}
\tilde{\psi}_1(\vy_1) + \tilde{\phi}_1(\vx_1)  
& =  \frac{1}{2}\|\vy_1 - \vy_0\|^2 + \frac{1}{2}\|\vx_1 - \vx_0\|^2 \\
& \quad + \tilde{a}_1( g^*(\vy_1)  + \ell(\vx_1)  - \langle\mB\vu, \vy_1\rangle + \langle \vx_1, \mB^T\vv\rangle   + \langle \vx_1 - \vx_0, \mB^T(\vy_1 - \vv)\rangle) \\
&= \frac{1}{2}\|\vy_1 - \vy_0\|^2 + \frac{1}{2}\|\vx_1 - \vx_0\|^2  \\
& \quad + \tilde{a}_1( \Gap^{\vu, \vv}(\vx_1, \vy_1) + g^*(\vv) + \ell(\vu)   + \langle \vx_1 - \vx_0, \mB^T(\vy_1 - \vv)\rangle),
\end{align*}
We have by definition that $\psi_1 = n \tilde{\psi}_1,$ $\phi_1 = n \tilde{\phi}_1,$ and $a_1 = n\tilde{a}_1$, so the result follows when we multiply both sides of this expression by $n$.
\end{proof}

The following two lemmas now bound the growth of $\psi_k(\vy_k) + \phi_k(\vx_k)$ below and above, and are the main technical lemmas used in proving Theorem~\ref{thm:vr}.

\begin{lemma}\label{lem:psi-phi-upper}
For all steps of Algorithm \ref{alg:vrpda} with $k\ge 2$, we have, $\forall (\vu, \vv)\in\gX\times\gY,$ 
\begin{eqnarray}
\psi_k(\vy_k) &\le&   \sum_{i=2}^k a_i g_{j_i}^*(v_{j_i})  + a_1 g^*(\vv)  + \frac{n}{2}\|\vv - \vy_0\|^2 - \frac{n}{2}\|\vv - \vy_k\|^2,     \notag\\
\phi_k(\vx_k) &\le& A_k \ell(\vu) + \frac{n}{2}\|\vu - \vx_0\|^2 - \frac{n + \sigma A_k}{2}\|\vu - \vx_k\|^2.\notag%
\end{eqnarray}
\end{lemma}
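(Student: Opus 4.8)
The plan is to unroll the recursive definitions of the estimate sequences $\psi_k$ and $\phi_k$ into closed form, read off their strong convexity parameters, and then apply the subgradient characterization of strong convexity (recalled after Definition~\ref{def:str-cvx}) at the minimizers $\vy_k$ and $\vx_k$.

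First I would unroll $\psi_k$: applying Step~11 of Algorithm~\ref{alg:vrpda} down to the initialization (Steps~2--7), and using $\psi_1 = n\tilde{\psi}_1$ together with $a_1 = n\tilde{a}_1$, one obtains
\begin{equation*}
\psi_k(\vy) = \frac{n}{2}\|\vy - \vy_0\|^2 + a_1\big(\langle -\mB\vx_0, \vy - \vv\rangle + g^*(\vy)\big) + \sum_{i=2}^{k} a_i\big(-\vb_{j_i}^T\bar{\vx}_{i-1}(y_{j_i} - v_{j_i}) + g_{j_i}^*(y_{j_i})\big).
\end{equation*}
The only quadratic term here is $\frac{n}{2}\|\cdot - \vy_0\|^2$, while all the remaining terms ($g^*$, the $g_{j_i}^*$'s, the linear/bilinear pieces) are convex under Assumption~\ref{ass:vr}, so $\psi_k$ is $n$-strongly convex; and $\vy_k = \argmin_{\vy}\psi_k(\vy)$ by Step~11. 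Applying the strong convexity inequality at the minimizer (where the zero vector is a subgradient) gives $\psi_k(\vy_k) \le \psi_k(\vv) - \frac{n}{2}\|\vv - \vy_k\|^2$. Substituting $\vy = \vv$ into the closed form above annihilates every linear and bilinear term (each carries a factor $\langle\cdot, \vv - \vv\rangle$ or $v_{j_i} - v_{j_i}$), leaving $\psi_k(\vv) = \frac{n}{2}\|\vv - \vy_0\|^2 + a_1 g^*(\vv) + \sum_{i=2}^{k} a_i g_{j_i}^*(v_{j_i})$, which is exactly the first claimed bound.

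The bound on $\phi_k$ follows the same template. Unrolling Step~12 down to the initialization and using $a_1 + \sum_{i=2}^{k}a_i = A_k$ (which holds since $A_1 = a_1$ and $A_{k} = A_{k-1} + a_{k}$) yields $\phi_k(\vx) = \frac{n}{2}\|\vx - \vx_0\|^2 + \langle \vx - \vu, \vw\rangle + A_k\,\ell(\vx)$ for some fixed vector $\vw$ (the accumulated variance-reduced gradient directions, whose precise value is irrelevant). Here $\phi_k$ is $(n + \sigma A_k)$-strongly convex — the quadratic term contributes $n$ and $A_k\ell$ contributes $\sigma A_k$ since $\ell$ is $\sigma$-strongly convex — and $\vx_k = \argmin_{\vx}\phi_k(\vx)$ by Step~12, so $\phi_k(\vx_k) \le \phi_k(\vu) - \frac{n + \sigma A_k}{2}\|\vu - \vx_k\|^2$; evaluating $\phi_k$ at $\vx = \vu$ kills the linear term, giving $\phi_k(\vu) = \frac{n}{2}\|\vu - \vx_0\|^2 + A_k\ell(\vu)$, which is the second claimed bound.

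I do not anticipate a serious obstacle: this is essentially the coordinate/randomized analogue of Lemma~\ref{lem:psi-phi-upper-general}. The points that require care are (i) the bookkeeping introduced by the factor-$n$ rescaling in Step~7, which is exactly what produces the $\frac{n}{2}\|\cdot\|^2$ prefactors and the ``$n$'' appearing in the two strong-convexity moduli; (ii) the identity $a_1 + \sum_{i=2}^{k} a_i = A_k$; and (iii) observing that the dual estimate sequence picks up no $\gamma A_k$-type term, because Assumption~\ref{ass:vr} only posits convexity — not strong convexity — of the $g_i^*$'s.
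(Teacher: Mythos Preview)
Your proposal is correct and follows essentially the same approach as the paper: unroll the estimate sequences to closed form, use that $\vy_k,\vx_k$ are minimizers together with (strong) convexity, and evaluate at the reference point $(\vv,\vu)$. The only cosmetic difference is that the paper writes out the first-order optimality condition explicitly, substitutes it back into $\psi_k(\vy_k)$, and then bounds each term via convexity and the three-point identity $\langle n(\vy_k-\vy_0),\vv-\vy_k\rangle+\tfrac{n}{2}\|\vy_k-\vy_0\|^2=\tfrac{n}{2}\|\vv-\vy_0\|^2-\tfrac{n}{2}\|\vv-\vy_k\|^2$, whereas you package all of this into the single inequality $\psi_k(\vy_k)\le\psi_k(\vv)-\tfrac{n}{2}\|\vv-\vy_k\|^2$; the two are equivalent.
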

\begin{proof}
By the definition of $\psi_k(\vy)$ in Algorithm \ref{alg:vrpda}, it follows that, $\forall k\ge 2,$ 
\begin{equation}\label{eq:vrpda-psi_k-def}
\begin{aligned}
\psi_k(\vy) =\; &\sum_{i=2}^k a_i( -\vb_{j_i}^T \bar{\vx}_{i-1} (y_{j_i} - v_{j_i}) + g_{j_i}^*(y_{j_i}) ) \\
&+ \left( \frac{n}{2}\|\vy - \vy_0\|^2 + a_1 (\langle - \mB \vx_0,\vy - \vv\rangle +  g^*(\vy))\right)
\end{aligned}
\end{equation}
and
\begin{equation}\label{eq:vrpda-phi_k-def}
\begin{aligned}
\phi_k(\vx) =\;& \sum_{i=2}^k a_i ( \langle \vx - \vu, \vz_{i-1} + (y_{i, j_i} - y_{i-1, j_i})\vb_{j_i} \rangle + \ell(\vx))         \\
&+ \Big(\frac{n}{2}\|\vx - \vx_0\|^2 + a_1 (\langle {\vx} - \vu, \mB^T\vy_1\rangle + \ell(\vx))\Big).
\end{aligned}
\end{equation}

By the first-order optimality condition in the definition of $\vy_k$ (see Algorithm \ref{alg:vrpda}), it follows that there exists 
$\vg_{g^*}(\vy_k)\in \partial g^*(\vy_k)$ such that 
\begin{eqnarray}\notag
\sum_{i=2}^k a_i( -\vb_{j_i}^T \bar{\vx}_{i-1} +(g_{j_i}^*)'(y_{k, j_i}))\ve_{j_i} + n(\vy_k - \vy_0) + a_1(-\mB\vx_0  + \vg_{g^*}(\vy_k))= \vzero,
\end{eqnarray}
where $\ve_{j_i}$ denotes the ${j_i}^{\mathrm{th}}$ standard basis vector (i.e., a vector whose element $j_i$ equals one, while all the remaining elements are zero) and $(g_{j_i}^*)' \in\partial g_{j_i}^*(y_{k, j_i})$ denotes the ${j_i}^{\mathrm{th}}$ element of $\vg_{g^*}(\vy_k))$. 
By rearranging this expression, we obtain
\begin{equation}\label{eq:vrpda-1st-order-opt}
    - a_1 \mB\vx_0  - \sum_{i=2}^k a_i \vb_{j_i}^T \bar{\vx}_{i-1}\ve_{j_i}     = - \sum_{i=2}^k a_i(g_{j_i}^*)'(y_{k, j_i})\ve_{j_i} -  n(\vy_k - \vy_0) - a_1\vg_{g^*}(\vy_k).  
\end{equation}

By setting $\vy=\vy_k$ in \eqref{eq:vrpda-psi_k-def}, then substituting from \eqref{eq:vrpda-1st-order-opt}, we obtain
\begin{align*}
\psi_k(\vy_k) =\;& \sum_{i=2}^k a_i (\langle (g_{j_i}^*)'(y_{k, j_i}), v_{j_i} - y_{k, j_i}\rangle + g_{j_i}^*(y_{k, j_i})) +  a_1(\langle \vg_{g^*}(\vy_k), \vv - \vy_k\rangle + g^*(\vy_k))  \nonumber  \\ 
&+ \langle n(\vy_k - \vy_0), \vv-\vy_k\rangle + \frac{n}{2}\|\vy_k - \vy_0\|^2  \nonumber  \\
\le\; &   \sum_{i=2}^k a_i g_{j_i}^*(v_{j_i})  + a_1 g^*(\vv)  + \frac{n}{2}\|\vv - \vy_0\|^2 - \frac{n}{2}\|\vv - \vy_k\|^2,    %
\end{align*}
where we have used $\langle \vg_{g^*}(\vy_k), \vv - \vy_k\rangle + g^*(\vy_k) \leq g^*(\vv)$ (by convexity of $g^*$) and $\langle n(\vy_k - \vy_0), \vv-\vy_k\rangle + \frac{n}{2}\|\vy_k - \vy_0\|^2 = \frac{n}{2}\|\vv - \vy_0\|^2 - \frac{n}{2}\|\vv - \vy_k\|^2.$

The bound on $\phi_k(\vx_k)$ follows from a similar sequence of arguments. 
By the first-order optimality in the definition of $\vx_k$, we have that there exists $\vg_{\ell}(\vx_k)\in\partial \ell(\vx_k)$ such that
\[
\sum_{i=2}^k a_i(\vz_{i-1} + (y_{i, j_i} - y_{i-1, j_i})\vb_{j_i}  + \vg_{\ell}(\vx_k))  + n(\vx_k - \vx_0) + a_1(\mB^T\vy_1 + \vg_{\ell}(\vx_k)) = \vzero,
\]
which rearranges to
\[
\sum_{i=2}^k a_i(\vz_{i-1} + (y_{i, j_i} - y_{i-1, j_i})\vb_{j_i}) +   a_1\mB^T\vy_1 = -\sum_{i=2}^k a_i \vg_{\ell}(\vx_k) - n(\vx_k-\vx_0) -a_1 \vg_{\ell}(\vx_k).
\]
By using this expression in Eq.~\eqref{eq:vrpda-phi_k-def} with $\vx=\vx_k$, we obtain
\begin{align*}
 \phi_k(\vx_k)  
=\; & \sum_{i=2}^k a_i (\langle \vg_{\ell}(\vx_k), \vu - \vx_k\rangle + \ell(\vx_k)) + a_1(\langle \vg_{\ell}(\vx_k), \vu - \vx_k\rangle + \ell(\vx_k)) \\ 
 & + \langle n(\vx_k - \vx_0), \vu - \vx_k\rangle + \frac{n}{2}\|\vx_k - \vx_0\|^2 \\ 
 \le\; & \sum_{i=2}^k a_i ( \ell(\vu) - \frac{\sigma}{2}\|\vu - \vx_k\|^2) + a_1( \ell(\vu) - \frac{\sigma}{2}\|\vu - \vx_k\|^2) + \frac{n}{2}\|\vu - \vx_0\|^2 - \frac{n}{2}\|\vu - \vx_k\|^2\\
 =\; & A_k \ell(\vu) + \frac{n}{2}\|\vu - \vx_0\|^2 - \frac{n + \sigma A_k}{2}\|\vu - \vx_k\|^2,
\end{align*}
where we have used $\langle \vg_{\ell}(\vx_k), \vu - \vx_k\rangle + \ell(\vx_k) \leq \ell(\vu) - \frac{\sigma}{2}\|\vu - \vx_k\|^2$ (by $\sigma$-strong convexity of $\ell$) and $\langle n(\vx_k - \vx_0), \vu - \vx_k\rangle + \frac{n}{2}\|\vx_k - \vx_0\|^2 = \frac{n}{2}\|\vu - \vx_0\|^2 - \frac{n}{2}\|\vu - \vx_k\|^2.$
The last line follows from the definition of $A_k$.
\end{proof}

\begin{lemma}\label{lem:psi-phi-lower}
For all steps of Algorithm \ref{alg:vrpda} with $k\ge 2$, taking expectation on all the randomness in the algorithm, %
we have for all $(\vu, \vv) \in \gX \times \gY$ that
\begin{eqnarray}
\E[\psi_k(\vy_k)] &\ge& \E\Big[\psi_{k-1}(\vy_{k-1}) + \frac{ n }{2}\|\vy_k - \vy_{k-1}\|^2  + a_k g^*_{j_k}(y_{k,j_k}) \nonumber \\ 
&&\quad\quad + a_k \langle \mB(\vx_k - \vx_{k-1}), \vy_k - \vv\rangle - a_{k-1}  \langle \mB(\vx_{k-1} - \vx_{k-2}), \vy_{k-1}  - \vv \rangle \nonumber \\
&& \quad\quad - n a_{k-1} \langle \mB(\vx_{k-1} - \vx_{k-2}), \vy_k - \vy_{k-1}\rangle + a_k  \langle - \mB \vx_k, \vy_k - \vv\rangle \nonumber  \\ 
&&\quad\quad  - (n-1)a_k\Big( \langle \mB ({\vx}_{k-1} - \vu), \vy_k - \vy_{k-1}\rangle  +
\langle \mB  \vu, \vy_k - \vy_{k-1}\rangle\Big)\Big],\notag\\ %
\E[\phi_k(\vx_k)] &\ge&\E\Big[\phi_{k-1}(\vx_{k-1}) + \frac{n+\sigma A_{k-1}}{2}\|\vx_k - \vx_{k-1}\|^2 \nonumber \\ 
&&   \quad\quad+ a_k(\langle \vx_k - \vu, \mB^T {\vy}_k \rangle + (n-1) \langle \vx_k  - \vu,  \mB^T(\vy_{k} -\vy_{k-1})\rangle+ \ell(\vx_k))\Big]. \notag%
\end{eqnarray}
\end{lemma}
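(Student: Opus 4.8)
The plan is to adapt the argument of Lemma~\ref{lem:psi-phi-lower-general} to the coordinate updates of Algorithm~\ref{alg:vrpda}, with the randomness entering only through a single conditional–expectation step. Two preliminary facts will drive the proof. First, I would check by induction that $\vz_k = \mB^T\vy_k$ for every $k\ge1$: this holds at $k=1$ by the initialization $\vz_1 = \mB^T\vy_1$, and since $\vy_k$ and $\vy_{k-1}$ differ only in coordinate $j_k$, one has $\mB^T\vy_k = \mB^T\vy_{k-1} + \frac1n(y_{k,j_k}-y_{k-1,j_k})\vb_{j_k}$, which is exactly the update defining $\vz_k$. Second, and more generally, for any vector $\vc$ the single-coordinate identity $\vb_{j_k}^T\vc\,(y_{k,j_k}-y_{k-1,j_k}) = n\innp{\mB\vc,\vy_k-\vy_{k-1}}$ holds pointwise, again because $\vy_k-\vy_{k-1}$ is supported on $j_k$.

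\textbf{Lower bound on $\phi_k(\vx_k)$.} Combining these two facts, the variance-reduced gradient in the update of $\vx_k$ equals $\vz_{k-1} + (y_{k,j_k}-y_{k-1,j_k})\vb_{j_k} = \mB^T\vy_{k-1} + n\mB^T(\vy_k-\vy_{k-1}) = \mB^T\vy_k + (n-1)\mB^T(\vy_k-\vy_{k-1})$, so that $\phi_k(\vx) = \phi_{k-1}(\vx) + a_k\bigl(\innp{\vx-\vu,\mB^T\vy_k + (n-1)\mB^T(\vy_k-\vy_{k-1})} + \ell(\vx)\bigr)$. Now $\phi_{k-1}$ is $(n+\sigma A_{k-1})$-strongly convex (the term $\frac n2\|\cdot-\vx_0\|^2$ contributes $n$, and $\sum_{i=1}^{k-1} a_i\ell$ contributes $\sigma A_{k-1}$) and is minimized at $\vx_{k-1}$; hence $\phi_{k-1}(\vx_k)\ge \phi_{k-1}(\vx_{k-1}) + \frac{n+\sigma A_{k-1}}{2}\|\vx_k-\vx_{k-1}\|^2$, and adding the remaining term evaluated at $\vx_k$ yields the claimed bound for $\phi_k(\vx_k)$ --- in fact pointwise, so the expectation in the statement is only formal here.

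\textbf{Lower bound on $\psi_k(\vy_k)$.} Using that $\psi_{k-1}$ is $n$-strongly convex (the $\frac{n}{2}\|\cdot-\vy_0\|^2$ term; recall each $g_i^*$ is merely convex) with minimizer $\vy_{k-1}$, I would first write
\[
\psi_k(\vy_k) \ge \psi_{k-1}(\vy_{k-1}) + \frac{n}{2}\|\vy_k-\vy_{k-1}\|^2 + a_k g^*_{j_k}(y_{k,j_k}) - a_k\vb_{j_k}^T\bar\vx_{k-1}(y_{k,j_k}-v_{j_k}),
\]
so it remains to rewrite the last term. Expanding $\bar\vx_{k-1} = \vx_{k-1} + \frac{a_{k-1}}{a_k}(\vx_{k-1}-\vx_{k-2})$ and splitting $y_{k,j_k}-v_{j_k} = (y_{k,j_k}-y_{k-1,j_k}) + (y_{k-1,j_k}-v_{j_k})$ produces four terms. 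The two carrying $(y_{k,j_k}-y_{k-1,j_k})$ are converted pointwise, via the identity above, into $-na_k\innp{\mB\vx_{k-1},\vy_k-\vy_{k-1}}$ and $-na_{k-1}\innp{\mB(\vx_{k-1}-\vx_{k-2}),\vy_k-\vy_{k-1}}$. For the two carrying $(y_{k-1,j_k}-v_{j_k})$, the coefficients $a_i$ are deterministic and $\vx_{k-1},\vx_{k-2},\vy_{k-1}$ are $\mathcal{F}_{k-1}$-measurable (with $\mathcal{F}_{k-1}$ the $\sigma$-algebra generated by $j_2,\dots,j_{k-1}$) while $j_k$ is uniform on $[n]$, so taking conditional expectation and then total expectation turns them into $-a_k\innp{\mB\vx_{k-1},\vy_{k-1}-\vv}$ and $-a_{k-1}\innp{\mB(\vx_{k-1}-\vx_{k-2}),\vy_{k-1}-\vv}$. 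The last two already match the claim; the remaining massaging is of $-na_k\innp{\mB\vx_{k-1},\vy_k-\vy_{k-1}} - a_k\innp{\mB\vx_{k-1},\vy_{k-1}-\vv}$, which via $\vy_{k-1}-\vv = (\vy_k-\vv)-(\vy_k-\vy_{k-1})$ equals $-a_k\innp{\mB\vx_{k-1},\vy_k-\vv} - (n-1)a_k\innp{\mB\vx_{k-1},\vy_k-\vy_{k-1}}$; substituting $-\mB\vx_{k-1} = -\mB\vx_k + \mB(\vx_k-\vx_{k-1})$ in the first inner product and $\mB\vx_{k-1} = \mB(\vx_{k-1}-\vu)+\mB\vu$ in the second reproduces exactly the remaining terms of the claimed lower bound for $\psi_k$.

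\textbf{Main obstacle.} The argument is mostly bookkeeping, and the delicate part will be to keep track of which manipulations are pointwise identities (those exploiting the single-coordinate support of $\vy_k-\vy_{k-1}$) versus the one step that genuinely needs the conditional expectation over $j_k$ (replacing a single-coordinate term by the full $\mB$-inner product against $\vy_{k-1}-\vv$), and to land on the precise grouping in the statement --- including the deliberately unsimplified split $\innp{\mB(\vx_{k-1}-\vu),\cdot} + \innp{\mB\vu,\cdot}$, which is kept so that it cancels against the matching $(n-1)a_k\innp{\vx_k-\vu,\mB^T(\vy_k-\vy_{k-1})}$ term of the $\phi_k$ bound when the two estimates are combined in the proof of Theorem~\ref{thm:vr}.
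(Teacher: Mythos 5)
Your proposal is correct and follows essentially the same route as the paper's proof: strong convexity plus optimality of $\vy_{k-1}$ and $\vx_{k-1}$ for the $n$- and $(n+\sigma A_{k-1})$-strongly convex estimate sequences, the pointwise identity $\vz_k = \mB^T\vy_k$ together with the single-coordinate support of $\vy_k - \vy_{k-1}$, and a single conditional-expectation step exploiting uniform sampling of $j_k$. The only (cosmetic) difference is bookkeeping: the paper isolates the randomness by writing the sampled term as $\langle -n\mB\bar\vx_{k-1},\vy_k-\vv\rangle + \langle n\mB_{-j_k}\bar\vx_{k-1},\vy_k-\vv\rangle$ and taking $\E[\mB_{-j_k}]=\frac{n-1}{n}\mB$, whereas you split $y_{k,j_k}-v_{j_k}$ into its $\gF_{k-1}$-measurable and updated parts first — the two decompositions are algebraically equivalent.
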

\begin{proof}
By the definition of $\psi_k$, we have
\begin{align}
\nonumber
\psi_k(\vy_k) =\; & \psi_{k-1}(\vy_k)  + a_k( - \vb_{j_k}^T \bar{\vx}_{k-1}(y_{k, j_k} - v_{j_k}) + g^*_{j_k}(y_{k,j_k})) \\
\nonumber
= \; & \psi_{k-1}(\vy_{k-1}) + \big[\psi_{k-1}(\vy_k) - \psi_{k-1}(\vy_{k-1})\big]_1 \\
&+ \big[a_k( - \vb_{j_k}^T \bar{\vx}_{k-1}(y_{k, j_k} - v_{j_k}) + g^*_{j_k}(y_{k,j_k}) )\big]_2.  \label{eq:vr-psi-0}
\end{align}

To bound $\psi_k(\vy_k)$ in expectation and obtain the claimed bound, we need to bound the terms in $[\cdot]_1$ and $[\cdot]_2$. To do so, 
let $\gF_k$ be the natural filtration that contains all the randomness up to and including iteration $k.$ In what follows, we will use the tower property of conditional expectation, which guarantees $\E[\cdot] = \E[\E[\cdot|\gF_{k-1}]].$ %

To bound $[\cdot]_1,$ we use the definition of $\psi_{k-1}$ from Algorithm~\ref{alg:vrpda}, and the facts that $\vy_{k-1}$ is optimal for $\psi_{k-1}$ and that  $\psi_{k-1}$ is the sum of the $n$-strongly convex function $\psi_1$ with $k-2$ additional convex terms. We thus obtain
\begin{equation}\label{eq:vrpda-[]_1-bnd}
    [\cdot]_1 = \psi_{k-1}(\vy_k) - \psi_{k-1}(\vy_{k-1}) \geq \frac{n}{2}\|\vy_k - \vy_{k-1}\|^2.
\end{equation}

To bound $[\cdot]_2,$ observe that $\vy_k$ and $\vy_{k-1}$ only differ over the coordinate $j_k,$ which is chosen uniformly at random, independent of the history. Further, recall that $\mB = \frac{1}{n}[\vb_1, \vb_2, \ldots, \vb_n]^T$, and let $\mB_{-j_k}$ denote the matrix $\mB$ with its ${j_k}^{\mathrm{th}}$ row replaced by a zero vector. Then, we have  
\begin{align*}
\E\big[ - \vb_{j_k}^T \bar{\vx}_{k-1} (y_{k, j_k} - v_{j_k})| \gF_{k-1} \big]
&=\E\big[ \langle - n \mB \bar{\vx}_{k-1}, \vy_k - \vv\rangle + \langle n \mB_{-j_k} \bar{\vx}_{k-1}, \vy_k - \vv\rangle | \gF_{k-1}\big]   \\
&= \E\big[\langle - n \mB \bar{\vx}_{k-1}, \vy_k - \vv\rangle | \gF_{k-1} \big] +   \langle (n-1) \mB \bar{\vx}_{k-1}, \vy_{k-1} - \vv\rangle,%
\end{align*}
where the second equality follows from $\vy_k$ being equal to $\vy_{k-1}$ over all the coordinates apart from $j_k,$ and from $j_k$ being chosen uniformly at random. Taking expectations on both sides of the last equality and using $\langle\mB \bar{\vx}_{k-1}, \vy_{k} - \vv\rangle = \langle\mB \bar{\vx}_{k-1},  \vy_k - \vy_{k-1}\rangle + \langle\mB \bar{\vx}_{k-1}, \vy_{k-1} - \vv\rangle$ and the tower property of conditional expectation, we have
\begin{equation}\label{eq:vrpda-psi-bnd-err-1}
    \E\big[ - \vb_{j_k}^T \bar{\vx}_{k-1} (y_{k, j_k} - v_{j_k})\big] = \E\big[ \langle -  \mB \bar{\vx}_{k-1}, \vy_k - \vv\rangle\big]  - (n-1) \E\big[\langle \mB \bar{\vx}_{k-1}, \vy_k - \vy_{k-1}\rangle\big]. 
\end{equation}

To finish bounding $\E\big[ - \vb_{j_k}^T \bar{\vx}_{k-1} (y_{k, j_k} - v_{j_k})\big]$ (and, consequently, $\E[[\cdot]_2]$), we now proceed to bound the terms inside the expectations in Eq.~\eqref{eq:vrpda-psi-bnd-err-1}. 
First, adding an subtracting $\vx_k$ in the first inner product term and using the definition of $\vxb_{k-1},$ we have
\begin{align}
\langle - \mB \bar{\vx}_{k-1}, \vy_k - \vv\rangle  
=\; & \langle - \mB (\bar{\vx}_{k-1} - \vx_k), \vy_k - \vv\rangle - \langle \mB \vx_k, \vy_k - \vv\rangle\nonumber\\
=\; & \langle \mB(\vx_k - \vx_{k-1}), \vy_k - \vv \rangle -  \frac{a_{k-1}}{a_k}\langle \mB(\vx_{k-1} - \vx_{k-2}), \vy_k - \vv \rangle -  \langle  \mB \vx_k, \vy_k - \vv\rangle\nonumber\\
=\;& \langle \mB(\vx_k - \vx_{k-1}), \vy_k - \vv\rangle -  \frac{a_{k-1}}{a_k}   \langle \mB(\vx_{k-1} - \vx_{k-2}), \vy_{k-1}  - \vv \rangle \nonumber \\
&-\frac{a_{k-1}}{a_k}  \langle \mB(\vx_{k-1} - \vx_{k-2}), \vy_k - \vy_{k-1}\rangle - \langle  \mB \vx_k, \vy_k - \vv\rangle.  \label{eq:vr-psi-2}
\end{align} 
On the other hand, using the definition of $\vxb_{k-1},$ we also  have
\begin{align}
\langle \mB \bar{\vx}_{k-1}, \vy_k - \vy_{k-1}\rangle  
=\; &\langle \mB {\vx}_{k-1}, \vy_k - \vy_{k-1}\rangle  + \frac{a_{k-1}}{a_k}\langle \mB ({\vx}_{k-1} - \vx_{k-2}), \vy_k - \vy_{k-1}\rangle \nonumber  \\
=\; & \langle \mB ({\vx}_{k-1} - \vu), \vy_k - \vy_{k-1}\rangle  +
\langle \mB  \vu, \vy_k - \vy_{k-1}\rangle \notag\\
&+ \frac{a_{k-1}}{a_k}\langle \mB ({\vx}_{k-1} - \vx_{k-2}), \vy_k - \vy_{k-1}\rangle.  \label{eq:vr-psi-3}
\end{align}
Thus, combining Eqs.~\eqref{eq:vrpda-psi-bnd-err-1}-\eqref{eq:vr-psi-3} with the definition of $[\cdot]_2,$ we have:
\begin{equation}\label{eq:vrpda-[]_2-bnd}
\begin{aligned}
\E\big[[\cdot]_2\big] =  \E\big[& a_k g^*_{j_k}(y_{k, j_k}) + {a_k}\langle \mB(\vx_k - \vx_{k-1}), \vy_k - \vv\rangle -  {a_{k-1}}  \langle \mB(\vx_{k-1} - \vx_{k-2}), \vy_{k-1}  - \vv \rangle\\
&-  n {a_{k-1}}  \langle \mB(\vx_{k-1} - \vx_{k-2}), \vy_k - \vy_{k-1}\rangle -  {a_k}\langle \mB \vx_k, \vy_k - \vv\rangle\\
&- (n-1) a_k \big( \langle \mB ({\vx}_{k-1} - \vu), \vy_k - \vy_{k-1}\rangle  +
\langle \mB \vu, \vy_k - \vy_{k-1}\rangle\big)\big].
\end{aligned}
\end{equation}
The bound on $\psi_k(\vy_k)$ from the statement of the lemma now follows by combining Eq.~\eqref{eq:vr-psi-0} with Eqs.~\eqref{eq:vrpda-[]_1-bnd} and \eqref{eq:vrpda-[]_2-bnd}.

To bound $\phi_k(\vx_k),$ we use similar arguments to those we used above for bounding $\psi_k(\vy_k)$. In particular, from the definition of $\phi_k(\vx_k),$ and using that $\phi_{k-1}(\vx_{k-1})$ is $(n + \sigma A_{k-1})$-strongly convex and minimized at $\vx_{k-1},$ we have
\begin{equation}\label{eq:vr-phi-0}%
\begin{aligned}
\phi_k(\vx_k) \ge\; & \phi_{k-1}(\vx_{k-1}) + \frac{n+\sigma A_{k-1}}{2}\|\vx_k - \vx_{k-1}\|^2  \\
&+ a_k\big(\langle \vx_k - \vu, \vz_{k-1} + (y_{k, j_k} - y_{k-1, j_k})\vb_{j_k} \rangle + \ell(\vx_k)\big). 
\end{aligned}
\end{equation}
Since $\vy_{k}$ and $\vy_{k-1}$ only differ on their ${j_k}$ element, by the definition of $\vz_k$, we have $\vz_k = \vz_{k-1} + \mB^T(\vy_k-\vy_{k-1})$, so by a recursive argument it follows that  $\vz_i = \mB^T\vy_i$ for all $i=1,2,\dotsc,k$. Thus, we have
\begin{align*}
\vz_{k-1} + (y_{k, j_k} - y_{k-1, j_k})\vb_{j_k}  =& \mB^T \vy_{k-1} + n\mB^T(\vy_k - \vy_{k-1})  \\
=& \mB^T\vy_k + (n-1)\mB^T(\vy_k - \vy_{k-1}),  
\end{align*}
and consequently
\begin{align}
\langle \vx_k - \vu, \vz_{k-1} + (y_{k, j_k} - y_{k-1, j_k})\vb_{j_k}\rangle
=\; & \langle \vx_k - \vu, \mB^T {\vy}_k \rangle + (n-1) \langle \vx_k  - \vu,  \mB^T(\vy_{k} -\vy_{k-1})\rangle.  \label{eq:vr-phi-1}
\end{align}
To complete the proof, it remains to combine Eqs.~\eqref{eq:vr-phi-0} and \eqref{eq:vr-phi-1}. 
\end{proof}

Using Lemmas~\ref{lem:init}--\ref{lem:psi-phi-lower}, we are now ready to prove our main result. 
\mainthmvr*
\begin{proof}
Fix any $(\vu, \vv) \in \gX \times \gY$. 
By combining the bounds on $\psi_k(\vy_k)$ and $\phi_k(\vx_k)$ from Lemma \ref{lem:psi-phi-lower}, we have $\forall k \ge 2$ that 
\begin{equation}\label{eq:vr-phi-psi-2}
\begin{aligned}
\E[\psi_k(\vy_k) + \phi_k(\vx_k)] 
 \ge \E\Big[&  \psi_{k-1}(\vy_{k-1}) +  \phi_{k-1}(\vx_{k-1})   \\
& + a_k \langle \mB(\vx_k - \vx_{k-1}), \vy_k - \vv \rangle -  a_{k-1}\langle \mB(\vx_{k-1} - \vx_{k-2}), \vy_{k-1}  - \vv \rangle \\
&+ P_k + Q_k,
\Big]. 
\end{aligned} 
\end{equation}
where
\begin{equation}\label{eq:vr-P_k-1}
    \begin{aligned}
        P_k =\; & \frac{n}{2}\|\vy_k - \vy_{k-1}\|^2 + \frac{n+\sigma A_{k-1}}{2}\|\vx_k - \vx_{k-1}\|^2\\
        &- n a_{k-1}\langle \mB(\vx_{k-1} - \vx_{k-2}), \vy_k - \vy_{k-1}\rangle + (n-1) a_k \langle \vx_k - \vx_{k-1}, \mB^T(\vy_k - \vy_{k-1})\rangle, 
    \end{aligned}
\end{equation}
and
\begin{equation}\label{eq:vr-Q_k-1}
    Q_k = a_k (g^*_{j_k}(y_{k, j_k}) + \ell(\vx_k)  - \langle \mB \vu, \vy_k\rangle   +  \langle \vx_k, \mB^T \vv\rangle    - (n-1)\langle \mB  \vu, \vy_k - \vy_{k-1}\rangle).
\end{equation}
Observe that the first line Eq.~\eqref{eq:vr-phi-psi-2} gives the desired recursive relationship for the sum of estimate sequences, while the second line telescopes. Thus, we only need to focus on bounding $P_k$ and $Q_k$. 

To bound $P_k,$ we start by bounding the inner product terms that appear in it. Recall that $\vy_k$ and $\vy_{k-1}$ only differ on coordinate $j_k.$ We thus have
\begin{align*}
    \langle \mB(\vx_{k-1} - \vx_{k-2}), \vy_k - \vy_{k-1}\rangle
=\; & \frac{1}{n} \vb_{j_k}^T (\vx_{k-1} - \vx_{k-2})(y_{k, j_k} - y_{k-1, j_k})\\
\leq\; & \frac{1}{n} \Big(\frac{1}{2\alpha}\big(\vb_{j_k}^T (\vx_{k-1} - \vx_{k-2})\big)^2 + \frac{\alpha}{2}(y_{k, j_k} - y_{k-1, j_k})^2\Big),
\end{align*}
for any $\alpha > 0,$ by Young's inequality. Further, as, by Assumption~\ref{ass:vr}, $\max_{1\leq j\leq n}\|\vb_j\| \leq R'$, applying Cauchy-Schwarz inequality, we have that $\vb_{j_k}^T (\vx_{k-1} - \vx_{k-2}) \leq R' \|\vx_{k-1} - \vx_{k-2}\|,$ and, hence
\begin{equation}\label{eq:vr-P_k-2}
    \langle \mB(\vx_{k-1} - \vx_{k-2}), \vy_k - \vy_{k-1}\rangle \leq \frac{1}{n}\Big(\frac{R'^2}{2\alpha}  \|\vx_{k-1} - \vx_{k-2}\|^2 +  \frac{\alpha}{2} \|\vy_{k} - \vy_{k-1}\|^2\Big). 
\end{equation}
Similarly, $\forall \beta >0, $ 
\begin{equation}\label{eq:vr-P_k-3}
\langle \mB(\vx_{k} - \vx_{k-1}), \vy_k - \vy_{k-1}\rangle 
\ge -\frac{1}{n}\left(\frac{R'^2 }{2\beta}\|\vx_{k}  - \vx_{k-1}\|^2 + \frac{\beta}{2}  \|\vy_{k} - \vy_{k-1}\|^2\right).
\end{equation}
Thus, by combining Eqs.~\eqref{eq:vr-P_k-1}, \eqref{eq:vr-P_k-2}, and \eqref{eq:vr-P_k-3}, we have $\forall \alpha, \beta > 0$ that 
\begin{equation}\notag
    \begin{aligned}
        P_k \geq \; & \frac{n^2 - \alpha n a_{k-1} - \beta(n-1)a_k}{2n}\|\vy_k - \vy_{k-1}\|^2\\
        &+ \frac{n(n + \sigma A_{k-1})- (n-1)a_k R'^2/\beta }{2n}\|\vx_k - \vx_{k-1}\|^2 - \frac{a_{k-1}R'^2}{2\alpha}\|\vx_{k-1} - \vx_{k-2}\|^2.
    \end{aligned}
\end{equation}
Taking $\alpha = \frac{n}{2a_{k-1}}$, $\beta = \frac{n}{2a_k},$ and recalling that by our choice of step sizes in Algorithm~\ref{alg:vrpda}, $a_k \leq \frac{\sqrt{n(n+\sigma A_{k-1})}}{2R'},$ $\forall k \geq 2,$ we can further simplify the bound on $P_k$ to 
\begin{equation}\label{eq:vr-P_k-final}
    P_k \geq \frac{n + \sigma A_{k-1}}{4}\|\vx_k - \vx_{k-1}\|^2 - \frac{n + \sigma A_{k-2}}{4}\|\vx_{k-1} - \vx_{k-2}\|^2, 
\end{equation}
which telescopes. 
Combining the bound on $P_k$ from Eq.~\eqref{eq:vr-P_k-final} with the initial bound on $\psi_k(\vy_k) + \phi_k(\vx_k)$ from Eq.~\eqref{eq:vr-phi-psi-2} and telescoping, we have
\begin{equation}\label{eq:vr-psi-phi-4}
\begin{aligned}
\E[\psi_k(\vy_k) + \phi_k(\vy_k)]  
\ge\E\Big[ &\psi_{1}(\vx_{1}) +  \phi_{1}(\vy_{1})\\
&+ a_k \langle \mB(\vx_k - \vx_{k-1}), \vy_k - \vv \rangle - a_{1}\langle \mB(\vx_{1} - \vx_{0}), \vy_{1}  - \vv \rangle     \\
&  +  \frac{n+\sigma A_{k-1}}{4}\|\vx_k - \vx_{k-1}\|^2  -  \frac{n}{4}\|\vx_{1} - \vx_{0}\|^2 + \sum_{i=2}^k Q_i\Big].  
\end{aligned}
\end{equation}
We now proceed to bound $\E\big[\sum_{i=2}^k Q_i\big].$ 
Observe first that $g^*_{j_i}(y_{i, j_i}) = n(g^*(\vy_i) - \frac{1}{n}\sum_{j \neq j_i}g^*_j(y_{i-1, j})).$ Thus,
\begin{align*}
    \E\big[g^*_{j_i}(y_{i, j_i})\big] &= \E\Big[\E\Big[n\Big(g^*(\vy_i) - \frac{1}{n}\sum_{j \neq j_i}g^*_j(y_{i-1, j})\Big)\Big|\gF_{i-1}\Big]\Big]\\
    &= \E\big[n g^*(\vy_i) - (n-1)g^*(\vy_{i-1})\big],
\end{align*}
where $\gF_{i-1}$ is the natural filtration, containing all randomness up to and including iteration $i-1.$  Therefore, we can bound $\E\big[\sum_{i=2}^k Q_i\big]$ as follows: 
\begin{align}
\E\big[\sum_{i=2}^k Q_i\big] =\; &\E\left[ \sum_{i=2}^k a_i (n g^*(\vy_i)  - (n-1)g^*(\vy_{i-1}))\right] + \E\left[ \sum_{i=2}^k a_i \ell(\vx_i)\right]\nonumber \\
& + \E\left[ \sum_{i=2}^k a_i( - n \langle \mB \vu, \vy_i\rangle  + (n-1)\langle \mB \vu, \vy_{i-1}\rangle    +  \langle \vx_i, \mB^T \vv\rangle )    \right]\nonumber\\
=\; &\E\Big[ n a_k g^*(\vy_k)  +  \sum_{i=2}^{k-1} ( n a_{i}  - (n-1)a_{i+1})g^*(\vy_{i}) - (n-1) a_2 g^*(\vy_1)\Big] \nonumber   \\
&  + \E\Big[ -n a_k \langle\mB \vu, \vy_k\rangle +         \sum_{i=2}^{k-1} ( -n a_i + (n-1)a_{i+1})\langle \mB\vu, \vy_i\rangle   + (n-1)a_2\langle \mB \vu, \vy_1\rangle \Big]  \nonumber \\
&  + \E\left[ \sum_{i=2}^k a_i \ell(\vx_i) + \sum_{i=2}^{k}a_i \langle\vx_i, \mB^T \vv\rangle \right]. \label{eq:f-g} 
\end{align}
On the other hand, recall that, by Lemma~\ref{lem:init}, we have
\begin{align}
\psi_1(\vy_1) + {\phi}_1(\vx_1) 
=\; &  \frac{n}{2}\|\vy_1 - \vy_0\|^2 + \frac{n}{2}\|\vx_1 - \vx_0\|^2 \nonumber \\
& + a_1( g^*(\vy_1) + \ell(\vx_1) - \langle\mB \vu, \vy_1\rangle  + \langle \vx_1, \mB^T\vv\rangle   + \langle \vx_1 - \vx_0, \mB^T(\vy_1 - \vv)\rangle) \nonumber \\
=\;& \frac{n}{2}\|\vy_1 - \vy_0\|^2 + \frac{n}{2}\|\vx_1 - \vx_0\|^2 \nonumber \\
& + (n-1) a_2( g^*(\vy_1) - \langle\mB \vu, \vy_1\rangle ) \nonumber \\
& + a_1(\ell(\vx_1) + \langle \vx_1, \mB^T\vv\rangle   + \langle \vx_1 - \vx_0, \mB^T(\vy_1 - \vv)\rangle ),\label{eq:init-2}
\end{align}
where we have used the setting $a_2 = \frac{a_1}{n-1}$ of Algorithm~\ref{alg:vrpda}.
 
Thus, combining Eqs.~\eqref{eq:vr-psi-phi-4}--\eqref{eq:init-2}, we have 
\begin{align}
\E[\psi_k(\vy_k) + \phi_k(\vy_k)] 
\ge \;&\E\Bigg[ a_k \langle \mB(\vx_k - \vx_{k-1}), \vy_k - \vv \rangle +  \frac{n+\sigma A_{k-1}}{4}\|\vx_k - \vx_{k-1}\|^2  \nonumber \\
& \quad + \frac{n}{2}\|\vy_1 - \vy_0\|^2   + \frac{n}{4}\|\vx_{1} - \vx_{0}\|^2     \nonumber  \\   
&\quad+  n a_k g^*(\vy_k)  +  \sum_{i=2}^{k-1} ( n a_{i}  - (n-1)a_{i+1})g^*(\vy_{i})    \nonumber   \\
&  \quad  -n a_k \langle\mB \vu, \vy_k\rangle -         \sum_{i=2}^{k-1} (n a_i - (n-1)a_{i+1})\langle \mB\vu, \vy_i\rangle    \nonumber   \\
& \quad+  \sum_{i=1}^k a_i \ell(\vx_i) + \sum_{i=1}^{k}a_i \langle\vx_i, \mB^T \vv\rangle\Bigg]. \nonumber 
\end{align}
Using convexity of $g^*$ and $\ell$ (to apply Jensen's inequality) and the definitions of $\vxt_k, \vyt_k,$ it now follows that 
\begin{align}
\E[\psi_k(\vy_k) + \phi_k(\vy_k)] \ge\; & \E\Bigg[ a_k \langle \mB(\vx_k - \vx_{k-1}), \vy_k - \vv \rangle  +  \frac{n+\sigma A_{k-1}}{4}\|\vx_k - \vx_{k-1}\|^2  \nonumber \\
& \quad + \frac{n}{2}\|\vy_1 - \vy_0\|^2 + \frac{n}{4}\|\vx_{1} - \vx_{0}\|^2     \nonumber  \\   
&\quad+ A_k (g^*(\tilde{\vy}_k)  + \ell(\tilde{\vx}_k)  - \langle\mB \vu, \tilde{\vy}_k\rangle + \langle \tilde{\vx}_k, \mB^T\vv\rangle)\Bigg].\label{eq:vpp-2} 
\end{align}
Convexity can be used here because, due to our setting of $a_i$, we have $n a_{i} - (n-1)a_{i+1}\ge 0$, $na_k + \sum_{i=2}^{k-1}( n a_i - (n-1)a_{i+1}) = n a_2 + \sum_{i=3}^k a_i = A_k,$ as $n a_2 = a_1 + a_2$,  and   $\vxt_k, \vyt_k$ are chosen as
$$
\tilde{\vy}_k  = \frac{1}{A_k}\Big(na_k\vy_k+  \sum_{i=2}^{k-1} ( n a_i - (n-1)a_{i+1})\vy_i \Big),\; 
 \tilde{\vx}_k := \frac{1}{A_k}\sum_{i=1}^k a_k \vx_i.$$

Recalling the definition \eqref{eq:Guv} of $\Gap^{\vu, \vv}(\vx, \vv)$ and rearranging Eq.~\eqref{eq:vpp-2}, we have
\begin{align*}
    A_k \E[\Gap^{\vu, \vv}(\vxt_k, \vyt_k) ] \leq \; & \E\Big[\psi_k(\vy_k) + \phi_k(\vx_k) - A_k(g^*(\vv) + \ell(\vu))\\
    &- a_k \langle \mB(\vx_k - \vx_{k-1}), \vy_k - \vv \rangle  -  \frac{n+\sigma A_{k-1}}{4}\|\vx_k - \vx_{k-1}\|^2\\
    &-\frac{n}{2}\|\vy_1 - \vy_0\|^2 - \frac{n}{4}\|\vx_{1} - \vx_{0}\|^2\Big].
\end{align*}
To complete the proof, it remains to apply Lemma~\ref{lem:psi-phi-upper}, and simplify. In particular, as
\begin{eqnarray}
\E\Big[ \sum_{i=2}^k a_i g_{j_i}^*(v_{j_i})  + a_1 g^*(\vv)\Big] =  \sum_{i=2}^k a_i g^*(\vv) + a_1 g^*(\vv) = A_k g^*(\vv), 
\end{eqnarray}
we have 
\begin{align*}
\E[\psi_k(\vy_k) + \phi_k(\vy_k)]  %
\le \E\Bigg[ &A_k g^*(\vv)    + \frac{n}{2}\|\vv - \vy_0\|^2 - \frac{n}{2}\|\vv - \vy_k\|^2  \\
& + A_k \ell(\vu) + \frac{n}{2}\|\vu - \vx_0\|^2 - \frac{n + \sigma A_k}{2}\|\vu - \vx_k\|^2\Bigg], %
\end{align*}
which leads to
\begin{align*}
    A_k \E[\Gap^{\vu, \vv}(\vxt_k, \vyt_k) ] \leq  \E\Big[& \frac{n(\|\vu - \vx_0\|^2 + \|\vv - \vy_0\|^2)}{2} - \frac{n}{2}\|\vv - \vy_k\|^2 - \frac{n + \sigma A_k}{2}\|\vu - \vx_k\|^2 \\
    &- a_k \langle \mB(\vx_k - \vx_{k-1}), \vy_k - \vv \rangle  -  \frac{n+\sigma A_{k-1}}{4}\|\vx_k - \vx_{k-1}\|^2\\
    &-\frac{n}{2}\|\vy_1 - \vy_0\|^2 - \frac{n}{4}\|\vx_{1} - \vx_{0}\|^2\Big].
\end{align*}

Finally, we have from Young's inequality and the definition of $a_k$ that 
\begin{align*}
-a_k\langle \mB(\vx_k - \vx_{k-1}), \vy_k - \vv \rangle %
\le \; & a_k \|\mB\|\|\vx_k - \vx_{k-1}\| \|\vy_k - \vv\|   \\
\le\; & R' a_k \|\vx_k - \vx_{k-1}\| \|\vy_k - \vv\| \\
\le \; & \frac{R'^2 a_k^2}{n} \|\vx_k - \vx_{k-1}\|^2 + \frac{n}{4}\|\vv - \vy_k\|^2 \\ 
\le \;& \frac{ n+\sigma A_{k-1}}{4} \|\vx_k - \vx_{k-1}\|^2 + \frac{n}{4}\|\vv - \vy_k\|^2, %
\end{align*}
leading to
\begin{align*}
    A_k \E[\Gap^{\vu, \vv}(\vxt_k, \vyt_k) ] \leq  \E\Big[& \frac{n(\|\vu - \vx_0\|^2 + \|\vv - \vy_0\|^2)}{2} - \frac{n}{4}\|\vv - \vy_k\|^2 - \frac{n + \sigma A_k}{2}\|\vu - \vx_k\|^2 \Big].
\end{align*}
Similarly as in the proof of Theorem~\ref{thm:general}, it now follows that, $\forall (\vu, \vv) \in \gX \times \gY,$%
\begin{align*}
    \E[\Gap^{\vu, \vv}(\vxt_k, \vyt_k) ] \leq  \frac{n(\|\vu - \vx_0\|^2 + \|\vv - \vy_0\|^2)}{2A_k}
\end{align*}
and, as $\Gap^{\vx^*, \vy^*}(\vxt_k, \vyt_k) \geq 0$, we also have
\begin{align*}
    \E\Big[\frac{n}{4}\|\vy^* - \vy_k\|^2 + \frac{n + \sigma A_k}{2}\|\vx^* - \vx_k\|^2 \Big] \leq \frac{n(\|\vx^* - \vx_0\|^2 + \|\vy^* - \vy_0\|^2)}{2}.
\end{align*}
The bound on $A_k$ follows by applying Lemma~\ref{lemma:vr-A_k-growth} (Appendix~\ref{appx:growth-of-seqs}).
\end{proof}

\section{Growth of Sequences}\label{appx:growth-of-seqs}

We start first with a general lemma that is useful for bounding the convergence rate of both \pda~and \vrpda.

\begin{restatable}{lemma}{lemseqgrowth}\label{lemma:es-seq-growth}
Let $\{A_k\}_{k \geq 0}$ be a sequence of nonnegative real numbers such that $A_0 = 0$ and $A_k$ is defined recursively via $A_k = A_{k-1} +  \sqrt{c_1^2 + c_2A_{k-1}}$, where $c_1 > 0,$ and $c_2 \ge 0$.
Define $K_0 = \lceil\frac{c_2}{9c_1}\rceil.$ Then 
\begin{equation}\notag
A_k \geq
    \begin{cases}
        \frac{c_2}{9}\Big(k - K_0 + \max\Big\{{3\sqrt{\frac{c_1}{c_2}},\, 1\Big\}\Big)^2}, &\text{ if } c_2 > 0 \text{ and } k > K_0,\\
        c_1 k, &\text{ otherwise}.%
    \end{cases}
\end{equation}
\end{restatable}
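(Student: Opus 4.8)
The plan is to argue by induction on $k$, treating the two regimes separately, and to first record the elementary bound $A_k \ge c_1 k$, which is valid for \emph{all} $k \ge 0$ and serves both as the ``otherwise'' case of the lemma and as the base case for the quadratic regime. The key observation is that $\sqrt{c_1^2 + c_2 A_{k-1}} \ge c_1$ since $c_2 A_{k-1} \ge 0$; together with $A_0 = 0$ this gives $A_k \ge A_{k-1} + c_1 \ge c_1 k$ by a one-line induction. In particular this settles the case $c_2 = 0$ and, more generally, the $c_1 k$ branch of the claimed bound.

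Now assume $c_2 > 0$. Set $c := \max\{3\sqrt{c_1/c_2},\, 1\}$ and $t_k := k - K_0 + c$, so that the target is $A_k \ge \tfrac{c_2}{9}\, t_k^2$ for $k \ge K_0$. I would establish this by induction starting at $k = K_0$. For the base case, note $c^2 = \max\{9 c_1/c_2,\, 1\}$, hence $\tfrac{c_2}{9}\, t_{K_0}^2 = \tfrac{c_2}{9}\, c^2 = \max\{c_1,\, c_2/9\}$; on the other hand $K_0 = \lceil c_2/(9 c_1)\rceil$ satisfies both $K_0 \ge 1$ and $K_0 \ge c_2/(9 c_1)$, so $A_{K_0} \ge c_1 K_0 \ge \max\{c_1,\, c_2/9\}$ by the bound from the previous paragraph. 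This is exactly why $K_0$ is chosen as it is: it is the first index at which the linear lower bound $c_1 k$ already dominates $c_2/9$.

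For the inductive step, suppose $k - 1 \ge K_0$ and $A_{k-1} \ge \tfrac{c_2}{9}\, t_{k-1}^2$. Since $x \mapsto x + \sqrt{c_2 x}$ is nondecreasing on $[0,\infty)$ and $A_k \ge A_{k-1} + \sqrt{c_2 A_{k-1}}$ (discarding the nonnegative $c_1^2$ inside the square root), the inductive hypothesis yields
\begin{equation*}
A_k \;\ge\; \frac{c_2}{9}\, t_{k-1}^2 + \sqrt{c_2 \cdot \frac{c_2}{9}\, t_{k-1}^2} \;=\; \frac{c_2}{9}\bigl(t_{k-1}^2 + 3\, t_{k-1}\bigr),
\end{equation*}
using $t_{k-1} \ge 0$. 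As $t_k = t_{k-1} + 1$, comparing with $\tfrac{c_2}{9}\, t_k^2 = \tfrac{c_2}{9}(t_{k-1}^2 + 2\, t_{k-1} + 1)$ shows the step goes through precisely when $t_{k-1} \ge 1$; and indeed $t_{k-1} = (k-1) - K_0 + c \ge c \ge 1$ for $k - 1 \ge K_0$. This closes the induction and hence the proof.

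The genuinely delicate points are (i) checking that the stated $K_0$ makes the base case work, which reduces to the two elementary inequalities $K_0 \ge 1$ and $c_1 K_0 \ge c_2/9$, and (ii) noticing that the additive shift $c \ge 1$ in the definition of $t_k$ is exactly what the quadratic induction needs — with a smaller shift the comparison $t_{k-1}^2 + 3\, t_{k-1} \ge (t_{k-1}+1)^2$ would fail near $k = K_0$. Beyond monotonicity of $x \mapsto x + \sqrt{c_2 x}$, no analytic input is required, so the main effort is bookkeeping rather than any real obstacle.
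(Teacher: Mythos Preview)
Your proof is correct and follows essentially the same approach as the paper: first establish the linear bound $A_k \ge c_1 k$ for all $k$, then use it as the base case for an induction showing quadratic growth via $A_k \ge A_{k-1} + \sqrt{c_2 A_{k-1}}$. The paper splits the base-case verification into two cases ($c_2 \le 9c_1$ versus $c_2 > 9c_1$) to identify $k_0$ and $q$ separately, whereas you handle both at once by observing that $K_0 \ge 1$ and $c_1 K_0 \ge c_2/9$ together give $A_{K_0} \ge c_1 K_0 \ge \max\{c_1,\, c_2/9\} = \tfrac{c_2}{9}\,c^2$; this is a minor streamlining but the underlying argument is identical.
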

\begin{proof}
As, by assumption, $c_2 \geq 0$, we have that $A_k \geq A_{k-1} + c_1,$ which, combined with $A_0 = 0,$ implies $A_k \ge c_1 k.$ 
Thus, we only need to focus on proving the stated bound in the case that $c_2 > 0$ and $k > K_0.$   

To prove the lemma, let us start by assuming that there exist $p>0, q>0, $ and $k_0\in\{1,2,\ldots\}$, such that 
\begin{equation}\label{eq:es-init}
\begin{aligned}
A_{k_0} &\ge p q^2, \text{ and } \\
A_{k-1} &\ge p(k-1-k_0 + q)^2, %
\end{aligned}
\end{equation}
for some $k \geq k_0 + 1$ (observe that the inequalities are consistent for $k-1 = k_0$). Then, by induction on $k,$ we can prove that $A_k \geq p(k-k_0 + q)^2$, for some specific $p, q, k_0.$ In particular:
\begin{eqnarray*}
A_k &\ge&  A_{k-1} +  \sqrt{c_2A_{k-1}} \\
&\ge&  p(k-1-k_0 + q)^2 + \sqrt{c_2 p(k-1-k_0 + q)^2} \\ 
&=& p(k-k_0 + q)^2 - 2p(k-k_0 + q) + p +\sqrt{c_2p}(k-1-k_0 +q) \\ 
&=& p(k-k_0 + q)^2 - \sqrt{p}(2\sqrt{p} - \sqrt{c_2}) (k-k_0) + p(1-2q) + \sqrt{c_2 p}(-1+q).  
\end{eqnarray*}

Let $c_2 = 9p$ and $q \ge 1$. Then, we have 
\begin{eqnarray}
A_k &\ge&  p(k-k_0 + q)^2 + p (k-k_0) + p(1-2q) + 3p(-1+q) \nonumber \\  
&\ge&p(k-k_0 + q)^2 + p(-1 + q) \nonumber  \\ 
&\ge& p(k-k_0 + q)^2,  \notag%
\end{eqnarray}
where we have used $k \geq k_0 + 1.$ 

It remains to show that we can choose $p, q, k_0$ that make the definition of $A_k$ from the statement of the lemma consistent with the assumption from Eq.~\eqref{eq:es-init}.  

If  $c_2\le  9 c_1,$ we have $A_1 = c_1 \ge \frac{c_2}{9} = p.$ Thus, to satisfy Eq.~\eqref{eq:es-init}, we can set $k_0 = K_0 = \lceil\frac{c_2}{9c_1}\rceil = 1$ and $q= 3\sqrt{\frac{c_1}{c_2}}.$ 
On the other hand, if $c_2 > 9 c_1,$ we have $A_1 = c_1 < \frac{c_2}{9} = p.$ Thus, by setting $k_0 = K_0 =  \lceil\frac{c_2}{9c_1}\rceil$ and $q=1 > 3\sqrt{\frac{c_1}{c_2}}$ and using that $A_k \geq c_1 k$ (argued at the beginning of the proof), we have $A_{k_0} \ge \frac{c_2}{9}=p. $ 
\end{proof}

We now examine the properties of the sequence $\{a_k\}_{k \geq 1}$ defined in Algorithm~\ref{alg:vrpda} %
and restated here:
\begin{equation}\label{eq:akdef}
a_1=\frac{n}{2R'}, \quad
a_2 = \frac{1}{n-1} a_1, \quad
a_k = \min \Big\{ \Big(1+\frac{1}{n-1} \Big) a_{k-1}, \frac{\sqrt{n(n+\sigma A_{k-1})}}{2R'} \Big\} \;\; \mbox{for $k \ge 3$.}
\end{equation}
We also examine growth properties of $\{A_k\}_{k\geq 1}$ defined by $A_k = \sum_{i=1}^k a_i$.

\begin{proposition}\label{prop:vr-burn-out}
Suppose that $n \geq 2$. Then there exists an index $k_0$ such that $A_k = \frac{n-1}{2R'}\Big(1 + \frac{1}{n-1}\Big)^k$ for all $k \leq k_0,$ where
\[
k_0 = \left\lceil \frac{\log B_{n,\sigma,R'}}{\log n - \log (n-1)} \right\rceil,
\]
and 
\[
B_{n,\sigma,R'} =  \frac{\sigma n (n-1)}{4 R'} +  \sqrt{\left(  \frac{\sigma n (n-1)}{4 R'}\right)^2 + n^2} \geq 
n\max\left\{1, \frac{\sigma (n-1)}{2 R'}\right\}.
\]
Further,
$$
    (n-1)\log B_{n,\sigma,R'} \leq k_0 \leq 1.1 (n-1) \log B_{n,\sigma,R'}+1,
$$
and we can conclude that the dependence of $k_0$ on $n$ is  $k_0 = \Omega(n \log n)$.
\end{proposition}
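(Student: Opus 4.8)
The plan is an induction on $k$ organized around a two-term invariant: as long as the minimum defining $a_k$ in \eqref{eq:akdef} is attained by its first argument $\big(1+\tfrac1{n-1}\big)a_{k-1}$, both $a_k$ and $A_k$ grow geometrically with ratio $\tfrac{n}{n-1}=1+\tfrac1{n-1}$. First I would dispatch the base cases directly from the initialization of Algorithm~\ref{alg:vrpda}: $A_1=\tfrac{n}{2R'}=\tfrac{n-1}{2R'}\big(1+\tfrac1{n-1}\big)$ and, using $a_2=\tfrac1{n-1}a_1$, $A_2=A_1+a_2=A_1\big(1+\tfrac1{n-1}\big)=\tfrac{n-1}{2R'}\big(1+\tfrac1{n-1}\big)^2$. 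For the inductive step (for $3\le k\le k_0$), assume $A_j=\tfrac{n-1}{2R'}\big(1+\tfrac1{n-1}\big)^j$ for all $j\le k-1$; then $a_{k-1}=A_{k-1}-A_{k-2}=\tfrac1{2R'}\big(1+\tfrac1{n-1}\big)^{k-2}$, and if the first argument wins the $\min$ at step $k$ we obtain $a_k=\big(1+\tfrac1{n-1}\big)a_{k-1}=\tfrac1{2R'}\big(1+\tfrac1{n-1}\big)^{k-1}$, hence $A_k=A_{k-1}+a_k=\big(\tfrac{n-1}{2R'}+\tfrac1{2R'}\big)\big(1+\tfrac1{n-1}\big)^{k-1}=\tfrac{n-1}{2R'}\big(1+\tfrac1{n-1}\big)^{k}$, closing the induction (the hypothesis $n\ge2$ is used throughout to keep $\tfrac1{n-1}$ and $\log n-\log(n-1)$ meaningful).

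The heart of the argument is pinning down exactly when the first argument wins the $\min$. Plugging the inductively available closed forms of $a_{k-1}$ and $A_{k-1}$ into $\big(1+\tfrac1{n-1}\big)a_{k-1}\le\tfrac{\sqrt{n(n+\sigma A_{k-1})}}{2R'}$, cancelling $\tfrac1{2R'}$ and squaring both (nonnegative) sides, the condition becomes $s^2-\tfrac{\sigma n(n-1)}{2R'}\,s-n^2\le0$ with $s:=\big(\tfrac{n}{n-1}\big)^{k-1}>0$. This quadratic in $s$ has a unique positive root, namely $B_{n,\sigma,R'}=\tfrac{\sigma n(n-1)}{4R'}+\sqrt{\big(\tfrac{\sigma n(n-1)}{4R'}\big)^2+n^2}$, so the condition is equivalent to $\big(\tfrac{n}{n-1}\big)^{k-1}\le B_{n,\sigma,R'}$, i.e. $k\le 1+\tfrac{\log B_{n,\sigma,R'}}{\log n-\log(n-1)}$. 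Since $k_0=\big\lceil\tfrac{\log B_{n,\sigma,R'}}{\log n-\log(n-1)}\big\rceil$ satisfies $k_0-1<\tfrac{\log B_{n,\sigma,R'}}{\log n-\log(n-1)}$, the condition holds at every step $k\in\{3,\dots,k_0\}$ (step $k=2$ needs nothing, as $a_2$ is defined outright), which is precisely what feeds the induction and gives $A_k=\tfrac{n-1}{2R'}\big(1+\tfrac1{n-1}\big)^k$ for all $k\le k_0$. The two displayed lower bounds on $B_{n,\sigma,R'}$ then follow by dropping, respectively, the leading summand and the $n^2$ under the square root.

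For the quantitative bound on $k_0$ I would use the elementary estimates $\tfrac1n\le\log\tfrac{n}{n-1}=\log\big(1+\tfrac1{n-1}\big)\le\tfrac1{n-1}$ (the upper bound from $\log(1+t)\le t$, the lower bound from $\log(1+t)\ge\tfrac{t}{1+t}$ at $t=\tfrac1{n-1}$). The upper estimate on $\log\tfrac{n}{n-1}$ gives $k_0\ge\tfrac{\log B_{n,\sigma,R'}}{\log n-\log(n-1)}\ge(n-1)\log B_{n,\sigma,R'}$, and combined with $B_{n,\sigma,R'}\ge n$ this yields $k_0\ge(n-1)\log n$, i.e. $k_0=\Omega(n\log n)$. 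For the matching upper bound $k_0\le 1.1(n-1)\log B_{n,\sigma,R'}+1$ one combines $k_0\le\tfrac{\log B_{n,\sigma,R'}}{\log n-\log(n-1)}+1$ with a slightly sharpened lower bound on $\log\big(1+\tfrac1{n-1}\big)$ (e.g.\ $\log(1+t)\ge t-t^2/2$) to absorb the factor $\tfrac{n}{n-1}$ into the constant $1.1$.

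The main obstacle I anticipate is bookkeeping rather than conceptual depth: carrying the exact closed forms of $a_{k-1},a_k,A_{k-1},A_k$ cleanly through the strong induction so that the reduction of the $\min$-condition to the scalar quadratic in $s$ is airtight, and being careful with the off-by-one when passing from ``the first argument wins iff $k\le 1+x$'' to the stated $k_0=\lceil x\rceil$, so that $k_0$ is recognized as a valid (at worst one-step-conservative) lower estimate on the length of the geometric phase rather than necessarily its exact last index.
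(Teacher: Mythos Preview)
Your proposal is correct and follows essentially the same route as the paper: establish the geometric closed forms for $a_k$ and $A_k$ by induction as long as the first branch of the $\min$ is active, reduce the switching condition to the quadratic $s^2-\tfrac{\sigma n(n-1)}{2R'}s-n^2\le0$ (the paper uses $r=(n/(n-1))^{k_0}$ rather than your $s=(n/(n-1))^{k-1}$, which is just the complementary indexing), and then bound $k_0$ via elementary estimates on $\log(1+1/(n-1))$. Your explicit handling of the off-by-one issue and recognition that the stated $k_0$ need only be a valid (possibly one-step-conservative) index for the geometric phase is, if anything, slightly more careful than the paper's derivation.
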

\begin{proof}
Cases $k = 1, 2$ can be verified by inspection, as $A_1 = a_1 = \frac{n}{2R'} = \frac{n-1}{2R'}\big(1 + \frac{1}{n-1}\big)$ and $A_2 = a_1 + a_2 = a_1 \big(1 + \frac{1}{n-1}\big) = \frac{n-1}{2R'}\big(1 + \frac{1}{n-1}\big)^2.$ Observe that also $a_2  = \frac{1}{2R'}\big(1 + \frac{1}{n-1}\big).$  
For $k > 2,$ as long as $a_{k-1} \big(1 + \frac{1}{n-1}\big) \leq \frac{\sqrt{n(n+\sigma A_{k-1})}}{2R'}$ for all successive iterates, %
we have that
\begin{align*}
    a_k = a_{k-1}\Big(1 + \frac{1}{n-1}\Big) = \frac{1}{2R'}\Big(1 + \frac{1}{n-1}\Big)^{k-1}. 
\end{align*}
As $A_k = \sum_{i=1}^k a_i,$ we have
\begin{align*}
    A_k &= \frac{n}{2R'} + \frac{1}{2R'}\sum_{i=2}^k \Big(1 + \frac{1}{n-1}\Big)^{i-1}\\
    &= \frac{n}{2R'} + \frac{1}{2R'}\Big(\frac{\big(1 + \frac{1}{n-1}\big)^{k} - 1}{1 + \frac{1}{n-1} - 1} - 1\Big)\\
    &= \frac{n-1}{2R'}\Big(1 + \frac{1}{n-1}\Big)^{k}.
\end{align*}
Now let $k_0$ be the first iteration for which 
\begin{equation} \label{eq:bu.2}
a_{k_0} \Big(1 + \frac{1}{n-1}\Big) > \frac{\sqrt{n(n+\sigma A_{k_0})}}{2R'}. 
\end{equation}
Since $k_0$ is the first such iteration, we also have (by the argument above) that $a_{k_0} = \frac{1}{2R'}\big(1 + \frac{1}{n-1}\big)^{k_0-1}$ and 
$A_{k_0} = \frac{n-1}{2R'}\Big(1 + \frac{1}{n-1}\Big)^{k_0}.$ 
Thus, by using these equalities and squaring both sides in \eqref{eq:bu.2}, we obtain
$$
    \frac{n^2}{(n-1)^2}\cdot\frac{1}{(2R')^2}\left(1 + \frac{1}{n-1}\right)^{2(k_0-1)} > \frac{n^2 + n\sigma \frac{n-1}{2R'}\left(1 + \frac{1}{n-1}\right)^{k_0}}{(2R')^2}.
$$
After simplifying the last expression, we have
\[
      \Big(1 + \frac{1}{n-1}\Big)^{2k_0} > n^2 \Big(1 + \frac{\sigma(n-1)}{2n R'}\Big(1 + \frac{1}{n-1}\Big)^{k_0}\Big),
\]
and we seek the smallest positive  integer $k_0$ that satisfies this property.
By introducing the notation $r = \left(1+\frac{1}{n-1}\right)^{k_0}$, we can write this condition as
\[
r^2 > n^2  \left( 1+ \frac{\sigma(n-1)}{2nR'} r \right)  \Leftrightarrow r^2 - \frac{\sigma n(n-1)}{2 R'} r - n^2 >0,
\]
from which we obtain (by solving the quadratic) that 
$r > B_{n,\sigma,R'}$, where 
$$
B_{n,\sigma,R'} = \frac{1}{2}\bigg(\frac{\sigma n(n-1)}{2 R'} + \sqrt{\Big(\frac{\sigma n(n-1)}{2 R'}\Big)^2 + 4 n^2}\bigg),
$$ 
which is identical to the definition of $B_{n,\sigma,R'}$ in the statement of the result.

Thus,  $k_0$ is the smallest integer such that
\[
\left(1+\frac{1}{n-1} \right)^{k_0} > B_{n,\sigma,R'},
\]
or, in other words, $k_0 = \lceil \kappa_0 \rceil$, where $\kappa_0$ satisfies
\[
\left(1+\frac{1}{n-1} \right)^{\kappa_0} = B_{n,\sigma,R'},
\]
which yields the main result when we take logs of both sides.

By using $\log (1+\delta) \in ((1/1.1) \delta, \delta)$ for $\delta \in(0, 0.21)$, we have
\[
\kappa_0 = \frac{\log B_{n,\sigma,R'}}{\log\left(1+\frac{1}{n-1}\right)} \in (1, 1.1) (n-1) \log B_{n,\sigma,R'}
\]
for $n \geq 2$. The final claim is immediate.
\end{proof}

\begin{proposition} \label{prop:switch}
For $k_0$ defined in Proposition~\ref{prop:vr-burn-out} and $n \geq 2$, we have for all $k \geq k_0$ that
\[
a_k \left( 1+\frac{1}{n-1} \right) > \frac{\sqrt{n(n+\sigma A_k)}}{2R'}.
\]
Thus, we have that 
\begin{equation}
    a_{k+1} = \frac{\sqrt{n(n+\sigma A_k)}}{2R'}, \quad \mbox{for all $k \geq k_0$.}
\end{equation}
\end{proposition}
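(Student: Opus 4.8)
The plan is to prove, by induction on $k$, the invariant $P(k)$: that $a_k\big(1+\frac{1}{n-1}\big) > \frac{\sqrt{n(n+\sigma A_k)}}{2R'}$ holds for every $k \ge k_0$. The displayed ``Thus'' conclusion is then immediate, since under $P(k)$ the minimum defining $a_{k+1}$ in \eqref{eq:akdef} selects its second argument, giving $a_{k+1} = \frac{\sqrt{n(n+\sigma A_k)}}{2R'}$. For the base case $P(k_0)$ there is nothing to do: in the proof of Proposition~\ref{prop:vr-burn-out}, $k_0$ is defined as the first index at which \eqref{eq:bu.2} holds, and \eqref{eq:bu.2} is exactly $P(k_0)$.

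For the inductive step, I would assume $P(k)$ for some $k \ge k_0$, so that $a_{k+1} = \frac{\sqrt{n(n+\sigma A_k)}}{2R'}$, equivalently $4R'^2 a_{k+1}^2 = n(n+\sigma A_k)$. Using $A_{k+1} = A_k + a_{k+1}$, squaring the desired inequality $P(k+1)$ (both sides are positive, since $a_{k+1} \ge \frac{n}{2R'} > 0$), substituting this identity, cancelling $a_{k+1}^2$, dividing by $a_{k+1}>0$, and using $\frac{n^2}{(n-1)^2}-1 = \frac{2n-1}{(n-1)^2}$, one sees that $P(k+1)$ is equivalent to the single scalar inequality
\[
a_{k+1} > \frac{n\sigma(n-1)^2}{4R'^2(2n-1)}.
\]
When $\sigma = 0$ this is trivial; in general it follows from a lower bound on $A_k$. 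Indeed $\{A_k\}$ is nondecreasing, so $A_k \ge A_{k_0}$, and Proposition~\ref{prop:vr-burn-out} gives $A_{k_0} = \frac{n-1}{2R'}\big(1+\frac{1}{n-1}\big)^{k_0} \ge \frac{n-1}{2R'}\,B_{n,\sigma,R'} \ge \frac{\sigma n (n-1)^2}{4R'^2}$, where the last step uses $B_{n,\sigma,R'} \ge \frac{\sigma n(n-1)}{2R'}$ (from Proposition~\ref{prop:vr-burn-out}). Hence
\[
a_{k+1}^2 = \frac{n(n+\sigma A_k)}{4R'^2} \ge \frac{n\sigma A_{k_0}}{4R'^2} \ge \Big(\frac{\sigma n(n-1)}{4R'^2}\Big)^{2} \ge \Big(\frac{n\sigma(n-1)^2}{4R'^2(2n-1)}\Big)^{2},
\]
the last inequality holding because $2n-1 \ge n-1 \ge 0$; taking square roots completes the induction.

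The computations above are routine, so the only real content is the observation that running the induction requires $A_{k_0}$ to be already at least of order $\sigma n (n-1)^2/R'^2$ — i.e.\ the geometric factor $1+\frac1{n-1}$ overtakes the growth of $\sqrt{n+\sigma A_k}$ only once $A_k$ is large enough. This is precisely what the threshold $B_{n,\sigma,R'}$ built into the definition of $k_0$ in Proposition~\ref{prop:vr-burn-out} encodes, so the work reduces to invoking that proposition; I anticipate no genuine obstacle beyond bookkeeping.
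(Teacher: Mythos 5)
Your proof is correct. The algebraic core coincides with the paper's: both reduce the passage from index $k$ to $k+1$ to the single scalar inequality $a_{k+1} > \frac{n\sigma(n-1)^2}{(2R')^2(2n-1)}$, via $a_{k+1}^2 = \frac{n(n+\sigma A_k)}{(2R')^2}$, $A_{k+1}=A_k+a_{k+1}$, and $\big(\frac{n}{n-1}\big)^2 - 1 = \frac{2n-1}{(n-1)^2}$. Where you differ is the logical packaging and how that scalar bound is certified. The paper argues by contradiction: it supposes some $k\ge k_0$ satisfies the inequality while $k+1$ violates it, derives $a_{k+1}\le \frac{n\sigma(n-1)^2}{(2R')^2(2n-1)}$, and refutes this with a lower bound on $a_{k_0}$ obtained through logarithmic manipulations plus monotonicity of $\{a_i\}$. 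You instead run a forward induction, anchoring the base case in the characterization of $k_0$ as the first index satisfying \eqref{eq:bu.2} (legitimate --- that is exactly how $k_0$ is introduced in the proof of Proposition~\ref{prop:vr-burn-out}), and you certify the scalar bound directly from $a_{k+1}^2 \ge \frac{n\sigma A_{k_0}}{(2R')^2}$ together with $A_{k_0}\ge \frac{n-1}{2R'}B_{n,\sigma,R'}\ge \frac{\sigma n(n-1)^2}{(2R')^2}$. This buys you two things: you need only the trivial monotonicity of $\{A_k\}$ rather than the paper's somewhat delicate appeal to monotonicity of $\{a_i\}$ (which fails between $a_1$ and $a_2$ when $n>2$ and requires care at the transition index), and you replace the log computation with one line of algebra. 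Two small polish items: the target inequality is strict while your displayed chain uses $\ge$ --- strictness is immediate since the first step discards the positive term $\frac{n^2}{(2R')^2}$ from $a_{k+1}^2=\frac{n^2+n\sigma A_k}{(2R')^2}$, and noting this also absorbs the $\sigma=0$ case without a separate remark; and the recursion $a_{k+1}=\min\{(1+\frac{1}{n-1})a_k,\cdot\}$ in \eqref{eq:akdef} only applies for $k+1\ge 3$, so the degenerate case $k_0=1$ (which occurs only for $n=2$, $\sigma=0$, where $a_2=\frac{1}{n-1}a_1$ happens to equal $\frac{n}{2R'}$) deserves a one-line check --- though the paper's own proof glosses over the same point.
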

\begin{proof}
Suppose for the purpose of contradiction that there is $k \geq k_0$ such that 
\begin{align} \label{eq:wr1}
a_k \left( 1+\frac{1}{n-1} \right) & > \frac{\sqrt{n(n+\sigma A_k)}}{2R'} \\
\label{eq:wr3}
a_{k+1} \left( 1+\frac{1}{n-1} \right) & \le  \frac{\sqrt{n(n+\sigma A_{k+1})}}{2R'}.
\end{align}
It follows from \eqref{eq:wr1} that
\begin{equation}
    \label{eq:wr2}
    a_{k+1} = \frac{\sqrt{n(n+\sigma A_k)}}{2R'}.
\end{equation}
By squaring both sides of  \eqref{eq:wr3} and using \eqref{eq:wr2} and  $A_{k+1} = A_k+a_k$, we have
\begin{align*}
    \left( 1+\frac{1}{n-1} \right)^2 {a_{k+1}}^2 & \le \frac{n(n+\sigma A_{k+1})}{(2R')^2 } \\
    \Leftrightarrow \;\; \left(\frac{n}{n-1} \right)^2 \frac{n(n+\sigma A_{k})}{(2R')^2} & \le 
    \frac{n(n+\sigma A_k + \sigma a_{k+1})}{(2R')^2} \\
    \Leftrightarrow \;\;  \left( \frac{n^2}{(n-1)^2} - 1 \right) \frac{n(n+\sigma A_{k})}{(2R')^2} & \le
    \frac{n \sigma a_{k+1}}{(2R')^2} \\
    \Leftrightarrow \;\;  %
    \frac{2n-1}{(n-1)^2}
    {a_{k+1}}^2 & \le \frac{n \sigma}{(2R')^2} a_{k+1} \\
    \Leftrightarrow \;\; a_{k+1} & \le  \frac{n(n-1)^2 \sigma}{(2R')^2(2n-1)},
\end{align*}
and it follows by taking logs of both sides of the last expression that 
\begin{equation}
    \label{eq:wr4}
    \log a_{k+1} \le \log \Big(\frac{1}{2R'}\Big) + \log \Big(\frac{n-1}{2n-1}\Big) + \log \Big(\frac{\sigma n(n-1)}{2R'}\Big).
\end{equation}
We now obtain a {\em lower} bound on $a_{k_0}$. Since, as noted in the proof of Proposition~\ref{prop:vr-burn-out}, we have $a_{k_0} = \frac{1}{2R'} \left( 1+ \frac{1}{n-1} \right)^{k_0-1}$, we have, using the definitions of $k_0$ and $B_{n,\sigma,R'}$ in the statement of Proposition~\ref{prop:vr-burn-out} that 
\begin{align*}
\log a_{k_0} & = \log \Big(\frac{1}{2R'}\Big) + (k_0-1) \log \Big(\frac{n}{n-1}\Big) \\
&= \log \Big(\frac{1}{2R'}\Big) - \log \Big(\frac{n}{n-1}\Big) + k_0 \log \Big(\frac{n}{n-1}\Big) \\
& \ge \log \Big(\frac{1}{2R'}\Big) - \log \Big(\frac{n}{n-1}\Big) + \log (B_{n,\sigma,R'}) \\
& > \log \Big(\frac{1}{2R'}\Big) - \log \Big(\frac{n}{n-1}\Big) + \log \Big(\frac{\sigma n(n-1)}{2R'}\Big) \\
\end{align*}
Now for $n \ge 2$, we have
\[
- \log \Big(\frac{n}{n-1}\Big) = \log \Big(\frac{n-1}{n}\Big) \ge \log \Big(\frac{n-1}{2n-1}\Big),
\]
so by substituting in the last expression above, we obtain
\begin{equation}
    \label{eq:wr5}
    \log a_{k_0} >\log \Big(\frac{1}{2R'}\Big) + \log \Big(\frac{n-1}{2n-1}\Big) + \log \Big(\frac{\sigma n(n-1)}{2R'}\Big).
\end{equation}
By comparing Eq.~\eqref{eq:wr4} and Eq.~\eqref{eq:wr5}, we see that $a_{k+1} < a_{k_0}$ which (since $\{a_i\}_{i \geq 1}$ is a monotonically increasing sequence) implies that $k+1<k_0$, which contradicts our choice of $k$. Thus, no such $k$ exists, and our proof is complete.
\end{proof}

Using Proposition~\ref{prop:switch}, we have that for all $k \geq k_0,$ $A_{k+1} = A_k + \frac{\sqrt{n(n+\sigma A_k)}}{2R'}.$ Thus, we can use Lemma~\ref{lemma:es-seq-growth} to conclude that after iteration $k_0,$ the growth of $A_k$ is quadratic. However, to obtain tighter constants, we will derive a slightly tighter bound in the following proposition.

\begin{proposition}\label{prop:vrpda-quadratic-growth-A_k}
Let $k_0, B_{n, \sigma, R'}$ be defined as in Proposition~\ref{prop:vr-burn-out}. Then, for all $k > k_0,$ $A_k \geq c(k - k_0 + n-1)^2,$ where $c = \frac{(n-1)^2\sigma}{(4R')^2n}.$ 
\end{proposition}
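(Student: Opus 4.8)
The plan is to prove the bound by induction on $k$, turning Proposition~\ref{prop:switch} into a simple scalar recursion for $A_k$ valid for all $k \geq k_0$, and using Proposition~\ref{prop:vr-burn-out} to seed the induction.

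\textbf{Setting up the recursion.} By Proposition~\ref{prop:switch}, for every $k \geq k_0$ we have $a_{k+1} = \frac{\sqrt{n(n+\sigma A_k)}}{2R'} \geq \frac{\sqrt{n\sigma A_k}}{2R'}$, and hence
\[
A_{k+1} \;=\; A_k + a_{k+1} \;\geq\; A_k + \frac{\sqrt{n\sigma}}{2R'}\sqrt{A_k}.
\]
Since $t \mapsto t + \frac{\sqrt{n\sigma}}{2R'}\sqrt{t}$ is nondecreasing on $[0,\infty)$, any lower bound on $A_k$ propagates through this recursion. I will carry the candidate bound $A_k \geq c\,m_k^2$ with $m_k := k - k_0 + n - 1$.

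\textbf{Base case ($k = k_0$).} By Proposition~\ref{prop:vr-burn-out}, $A_{k_0} = \frac{n-1}{2R'}\big(1+\frac{1}{n-1}\big)^{k_0} \geq \frac{n-1}{2R'}B_{n,\sigma,R'} \geq \frac{n-1}{2R'}\cdot\frac{\sigma n(n-1)}{2R'} = \frac{\sigma n(n-1)^2}{4R'^2}$, where the middle inequality uses $\big(1+\frac{1}{n-1}\big)^{k_0}\geq B_{n,\sigma,R'}$ (from the definition of $k_0$) and the last uses the stated lower bound on $B_{n,\sigma,R'}$. Comparing with $c(n-1)^2 = \frac{\sigma(n-1)^4}{16R'^2 n}$, the inequality $A_{k_0}\geq c(n-1)^2$ reduces to $4n^2 \geq (n-1)^2$, which holds; thus $A_{k_0} \geq c\,m_{k_0}^2$.

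\textbf{Inductive step.} Assume $A_k \geq c\,m_k^2$ for some $k \geq k_0$, and write $m = m_k = k-k_0+n-1 \geq n-1$. Plugging into the recursion and using $\sqrt{n\sigma c} = \frac{\sigma(n-1)}{4R'}$ gives
\[
A_{k+1} \;\geq\; c m^2 + \frac{\sqrt{n\sigma c}}{2R'}\,m \;=\; c m^2 + \frac{\sigma(n-1)}{8R'^2}\,m.
\]
To conclude $A_{k+1}\geq c(m+1)^2 = cm^2 + c(2m+1)$, it remains to check $\frac{\sigma(n-1)}{8R'^2}m \geq c(2m+1)$; substituting $c = \frac{(n-1)^2\sigma}{16R'^2 n}$ and simplifying, this is equivalent to $2m \geq \frac{n-1}{n}(2m+1)$, i.e.\ $2m \geq n-1$, i.e.\ $m \geq \frac{n-1}{2}$, which holds since $m \geq n-1$. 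Hence $A_{k+1}\geq c\,m_{k+1}^2$, completing the induction and establishing $A_k \geq c(k-k_0+n-1)^2$ for all $k \geq k_0$, in particular for $k > k_0$.

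All computations here are elementary; the only point requiring care is bookkeeping the constant $c$ through the square-root recursion and verifying that the residual inequality needed is exactly $m \geq (n-1)/2$ — a bound that holds for all $n$ — rather than a stronger one (such as $c \leq \frac{n\sigma}{25R'^2}$ coming from a cruder estimate) that would fail for large $n$. I do not expect any genuine obstacle beyond this.
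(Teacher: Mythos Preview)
Your proof is correct and follows essentially the same approach as the paper: drop the $n^2$ inside the square root to get the recursion $A_{k+1}\geq A_k + \frac{\sqrt{n\sigma}}{2R'}\sqrt{A_k}$, seed the induction with $A_{k_0}\geq \frac{n-1}{2R'}B_{n,\sigma,R'}\geq \frac{\sigma n(n-1)^2}{4R'^2}$, and verify the inductive step by a linear residual inequality in $m=k-k_0+n-1$. The only cosmetic difference is that the paper checks the slightly stronger residual $\frac{\sqrt{nc\sigma}}{2R'}m \geq 2c(m+1)$ (reducing to $k\geq k_0$), whereas you check $\frac{\sqrt{nc\sigma}}{2R'}m \geq c(2m+1)$ (reducing to $m\geq (n-1)/2$); both hold for all $k\geq k_0$.
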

\begin{proof}
We prove the proposition by induction on $k_0$. Observe that $B_{n, \sigma, R'} \geq \frac{\sigma n(n-1)}{2R'}.$ Applying Proposition~\ref{prop:vr-burn-out}, we have that 
\[
A_{k_0} \geq \frac{n-1}{2R'}B_{n, \sigma, R'} \ge \frac{(n-1)^2 n \sigma}{4 (R')^2} = 4cn^2 \ge c(n-1)^2,
\]
so the claim holds for $k=k_0$. Now assume that the claim holds for $k \geq k_0$ and consider iteration $k + 1.$ 
We have
\begin{align*}
    A_{k+1} &= A_k + \frac{\sqrt{n^2 + n\sigma A_k}}{2R'}\\
    &> c(k - k_0 + n-1)^2 + \frac{\sqrt{nc \sigma}}{2R'}(k- k_0 + n - 1). 
\end{align*}
Let us argue that $\frac{\sqrt{nc \sigma}}{2R'}(k- k_0 + n - 1) \geq 2c(k - k_0 + n).$ 
We note first that
\[
\frac{\sqrt{nc\sigma}}{2R'} = \frac{\sqrt{n \sigma}}{2R'} \frac{(n-1)\sqrt{\sigma}}{4R'\sqrt{n}} = 
\frac{(n-1)\sigma}{8 (R')^2} = \frac{n}{n-1} 2c \implies 2c = \left( 1-\frac{1}{n} \right) \frac{\sqrt{nc\sigma}}{2R'}.
\]
We then have 
\begin{align*}
    \frac{\sqrt{nc \sigma}}{2R'}(k- k_0 + n - 1) - 2c(k - k_0 + n) & = \Big(\frac{\sqrt{nc \sigma}}{2R'} - 2c\Big) (k - k_0 + n) - \frac{\sqrt{nc \sigma}}{2R'} \\
&     \geq  \frac{\sqrt{nc \sigma}}{2R'} \frac{1}{n} (k-k_0+n) - \frac{\sqrt{nc \sigma}}{2R'} \\
   & \geq  0,
\end{align*}
where we have used $k \geq k_0$.
Hence, we have that:
\[
    A_{k+1} > c(k - k_0 + n-1)^2 + 2 c (k - k_0 + n)
    = c(k - k_0 + n)^2 + 1 > c(k - k_0 + n)^2,
\]
establishing the inductive step and proving the claim.
\end{proof}
Proposition~\ref{prop:vrpda-quadratic-growth-A_k} is mainly useful when $\sigma$ is not too small. 
For small or zero values of $\sigma,$ however, we can show that after $k_0$ iterations the growth of $A_k$ is at least a linear function of $k$, as follows. 

\begin{proposition}\label{prop:vr-linear-grwoth}
Let $K_0 = \lceil \frac{\log(n)}{\log(n) - \log(n-1)}\rceil$, $n \geq 2$. 
Then, for all $k \geq K_0,$ we have that $A_k \geq \frac{n(k - K_0 + n -1)}{2R'}.$   
\end{proposition}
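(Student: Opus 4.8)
The plan is to prove the bound by a short induction (equivalently, a telescoping estimate): establish it at $k = K_0$, and then show that every subsequent increment $a_{k+1}$ is at least $\frac{n}{2R'}$, so that $A_k$ grows at least linearly with slope $\frac{n}{2R'}$ beyond $k = K_0$.

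First I would record that $K_0 \le k_0$, where $k_0$ is the index from Proposition~\ref{prop:vr-burn-out}. This follows because $B_{n,\sigma,R'} \ge n$, so $\frac{\log B_{n,\sigma,R'}}{\log n - \log(n-1)} \ge \frac{\log n}{\log n - \log(n-1)}$, and hence $k_0 \ge K_0$ after taking ceilings (using $\log n - \log(n-1) > 0$ for $n \ge 2$). Consequently Proposition~\ref{prop:vr-burn-out} applies at $k = K_0$ and gives $A_{K_0} = \frac{n-1}{2R'}\big(\frac{n}{n-1}\big)^{K_0}$. Since, by definition, $K_0 \ge \frac{\log n}{\log n - \log(n-1)}$, we have $\big(\frac{n}{n-1}\big)^{K_0} \ge n$, so $A_{K_0} \ge \frac{n(n-1)}{2R'}$, which is exactly the claimed bound for $k = K_0$.

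Next I would show that $a_j \ge \frac{n}{2R'}$ for every $j \ge K_0+1$, splitting into two regimes. If $K_0+1 \le j \le k_0$, then Proposition~\ref{prop:vr-burn-out} gives $A_j = \frac{n-1}{2R'}\big(\frac{n}{n-1}\big)^{j}$, hence $a_j = A_j - A_{j-1} = \frac{1}{2R'}\big(\frac{n}{n-1}\big)^{j-1}$; since $j-1 \ge K_0 \ge \frac{\log n}{\log n - \log(n-1)}$, we again get $\big(\frac{n}{n-1}\big)^{j-1} \ge n$, hence $a_j \ge \frac{n}{2R'}$. If instead $j \ge k_0+1$, then Proposition~\ref{prop:switch} gives $a_j = \frac{\sqrt{n(n+\sigma A_{j-1})}}{2R'} \ge \frac{\sqrt{n\cdot n}}{2R'} = \frac{n}{2R'}$, using $\sigma A_{j-1} \ge 0$. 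These two regimes together cover all $j \ge K_0+1$ (when $k_0 = K_0$ the first regime is vacuous and only the second is used).

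Finally, for any $k \ge K_0$ I would conclude
$$A_k = A_{K_0} + \sum_{j=K_0+1}^{k} a_j \ge \frac{n(n-1)}{2R'} + (k-K_0)\frac{n}{2R'} = \frac{n(k-K_0+n-1)}{2R'},$$
which is the desired bound. The whole argument is essentially bookkeeping once Propositions~\ref{prop:vr-burn-out} and~\ref{prop:switch} are in hand; the step I expect to require the most care is correctly handling the ceiling in the definition of $K_0$ when verifying $\big(\frac{n}{n-1}\big)^{j-1} \ge n$ for $j-1 \ge K_0$, and confirming that the two regimes $j \le k_0$ and $j > k_0$ jointly exhaust all indices $j \ge K_0+1$.
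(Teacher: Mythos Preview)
Your proof is correct and follows essentially the same approach as the paper: establish $K_0 \le k_0$, use Proposition~\ref{prop:vr-burn-out} to get $A_{K_0} \ge \frac{n(n-1)}{2R'}$, show $a_j \ge \frac{n}{2R'}$ for all $j \ge K_0+1$, and telescope. The only cosmetic difference is that the paper obtains the increment bound directly from the $\min$ recursion (using $a_{K_0} \ge \frac{n-1}{2R'}$ so both branches of the $\min$ are $\ge \frac{n}{2R'}$, inductively), whereas you split into the regimes $j \le k_0$ and $j > k_0$ and invoke Propositions~\ref{prop:vr-burn-out} and~\ref{prop:switch} respectively; both routes yield the same inequality.
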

\begin{proof}
 Since $K_0 \leq k_0,$ we have by Proposition~\ref{prop:vr-burn-out} that $A_{K_0} \geq \frac{n(n-1)}{2 R'}$ and $a_{K_0} \geq \frac{n-1}{2R'}.$ As $a_k = \min \big\{ \big(1+\frac{1}{n-1} \big) a_{k-1}, \frac{\sqrt{n(n+\sigma A_{k-1})}}{2R'} \big\}$ and $\sigma \geq 0,$ for all $k \geq 3,$  we have that $a_k \geq  \frac{n}{2R'}$ for all $k \geq K_0 + 1,$ leading to the claimed bound on $A_k.$ 
\end{proof}

We can now combine Propositions~\ref{prop:vr-burn-out}-\ref{prop:vr-linear-grwoth} to obtain a lower bound on $A_k,$ as summarized in the following lemma. Its proof is a direct consequence of Propositions~\ref{prop:vr-burn-out}-\ref{prop:vr-linear-grwoth}, and is thus omitted.

\begin{lemma}\label{lemma:vr-A_k-growth}
Let sequences $\{a_k\}_{k \geq 1}$, $\{A_k\}_{k \geq 1}$ be defined by Eq.\eqref{eq:akdef}. Then:
\[
A_k \geq \max \left\{\frac{n-1}{2R'}\Big(1 + \frac{1}{n-1}\Big)^k \mathds{1}_{k \leq k_0}, \;
     \frac{(n-1)^2 \sigma}{(4R')^2 n}(k-k_0 + n-1)^2 \mathds{1}_{k \geq k_0},
     \frac{n(k - K_0 + n - 1)}{2R'}\mathds{1}_{k \geq K_0}\right\},
\]
where $\mathds{1}$ denotes the indicator function and
\begin{align*}
& K_0 = \left\lceil \frac{\log n}{\log n - \log(n-1)} \right\rceil, \quad
k_0 = \left\lceil \frac{\log B_{n,\sigma,R'}}{\log n - \log (n-1)} \right\rceil, \\
& B_{n, \sigma, R'} = \frac{\sigma n (n-1)}{4 R'} \left[ 1+ \sqrt{1+ \left(\frac{4R'}{\sigma(n-1)}\right)^2
} 
\right] \geq n\max\left\{1, \frac{\sigma (n-1)}{2 R'}\right\}.
\end{align*}
\end{lemma}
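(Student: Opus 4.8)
The plan is to assemble the three already-established Propositions~\ref{prop:vr-burn-out}, \ref{prop:vrpda-quadratic-growth-A_k}, and \ref{prop:vr-linear-grwoth}, each of which controls $A_k$ on an (overlapping) range of indices $k$, and to observe that the pointwise maximum of the three resulting lower bounds is itself a valid lower bound, since each individual bound holds precisely when its associated indicator is nonzero.

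First I would reconcile notation by checking that the two displayed formulas for $B_{n,\sigma,R'}$ coincide. Indeed,
\[
\frac{\sigma n(n-1)}{4R'}\Big[1 + \sqrt{1 + \big(\tfrac{4R'}{\sigma(n-1)}\big)^2}\Big]
= \frac{\sigma n(n-1)}{4R'} + \sqrt{\Big(\frac{\sigma n(n-1)}{4R'}\Big)^2 + n^2},
\]
because $\big(\frac{\sigma n(n-1)}{4R'}\big)^2\big(1 + \big(\frac{4R'}{\sigma(n-1)}\big)^2\big) = \big(\frac{\sigma n(n-1)}{4R'}\big)^2 + n^2$; hence the $B_{n,\sigma,R'}$ in the lemma is exactly the one appearing in Propositions~\ref{prop:vr-burn-out}--\ref{prop:vrpda-quadratic-growth-A_k}. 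The lower bound $B_{n,\sigma,R'}\ge n\max\{1,\frac{\sigma(n-1)}{2R'}\}$ follows since $\sqrt{(\frac{\sigma n(n-1)}{4R'})^2 + n^2}$ is at least both $\frac{\sigma n(n-1)}{4R'}$ and $n$, giving $B_{n,\sigma,R'} \ge \frac{\sigma n(n-1)}{2R'}$ and $B_{n,\sigma,R'}\ge n$. I would also record the consistency fact $K_0 \le k_0$: since $B_{n,\sigma,R'}\ge n>1$ we have $\log B_{n,\sigma,R'}\ge \log n$, and dividing by the positive quantity $\log n - \log(n-1)$ and taking ceilings preserves the inequality; this is the condition under which Proposition~\ref{prop:vr-linear-grwoth} was derived.

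Next I would invoke the three propositions term by term. For $k \le k_0$, Proposition~\ref{prop:vr-burn-out} gives $A_k = \frac{n-1}{2R'}(1+\frac{1}{n-1})^k$, so in particular $A_k \ge \frac{n-1}{2R'}(1+\frac{1}{n-1})^k \mathds{1}_{k \le k_0}$. For $k \ge k_0$, Proposition~\ref{prop:vrpda-quadratic-growth-A_k}---whose proof already includes the base case $k=k_0$ through $A_{k_0}\ge 4cn^2\ge c(n-1)^2$---gives $A_k \ge \frac{(n-1)^2\sigma}{(4R')^2 n}(k-k_0+n-1)^2\mathds{1}_{k\ge k_0}$. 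For $k \ge K_0$, Proposition~\ref{prop:vr-linear-grwoth} gives $A_k \ge \frac{n(k-K_0+n-1)}{2R'}\mathds{1}_{k\ge K_0}$. Since $A_k$ simultaneously dominates all three of these quantities (each being set to $0$ outside its range of validity by the indicator), it dominates their maximum, which is precisely the bound claimed in the lemma.

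The only points that require care are purely bookkeeping: that the three ranges of validity quoted in the propositions jointly cover all $k\ge1$ and are mutually consistent---in particular that the endpoint $k=k_0$ is already handled (it is, as the induction base case of Proposition~\ref{prop:vrpda-quadratic-growth-A_k}), that $K_0\le k_0$, and that the two superficially different expressions for $B_{n,\sigma,R'}$ denote the same number. None of these is a genuine obstacle, since all of the analytic work---the geometric phase, the quadratic phase, and the linear phase of $\{A_k\}$---has been carried out in the three preceding propositions; the proof of the lemma is therefore just the assembly step, and can be stated in a couple of lines.
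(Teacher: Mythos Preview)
Your proposal is correct and matches the paper's approach exactly: the paper states that the lemma is a direct consequence of Propositions~\ref{prop:vr-burn-out}, \ref{prop:vrpda-quadratic-growth-A_k}, and \ref{prop:vr-linear-grwoth} and omits the proof entirely. Your write-up simply spells out the assembly step (plus the notational reconciliation of the two forms of $B_{n,\sigma,R'}$ and the check $K_0\le k_0$), which is precisely what ``direct consequence'' means here.
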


\section{Efficient Implementation of VRPDA$^2$~and Other Computational Considerations}\label{appx:comp-considerations}

\begin{algorithm}[t!]
\caption{Variance Reduction via Primal-Dual Accelerated Dual Averaging (\vrpda~, Implementation Version) }\label{alg:vrpda-implementable}
\begin{algorithmic}[1]
\STATE \textbf{Input: } $(\vx_0, \vy_0)\in\gX\times \gY, (\vu, \vv)\in \gX \times \gY.$
\STATE $a_0 = A_0 = 0, \tilde{a}_1 = \frac{1}{2R'}, \tilde{\vp}_1= - \mB \vx_0.$
\STATE $\vy_1 = \prox_{\tilde{a}_1 g^*}(\vy_0 - \tilde{a}_1\tilde{\vp}_1).$
\STATE $\vz_1 = \mB^T\vy_1$.
\STATE $\vx_1 =  \prox_{\tilde{a}_1 \ell}(\vx_0 - \tilde{a}_1 \vz_1).$
\STATE $a_1 = A_1 = n \tilde{a}_1, a_2 = \frac{1}{n-1}a_1, A_2 = A_1 + a_2.$
\STATE $\vp_1 = a_1 \tilde{\vp}_1, \vq_1 = a_1 \vz_1, \vr_1 = \frac{1}{n}{a}_1\mathbf{1}.$
\FOR{$k = 2,3,\ldots, K$}
\STATE $\bar{\vx}_{k-1} =  \vx_{k-1} + \frac{a_{k-1}}{a_{k}}(\vx_{k-1} - \vx_{k-2}).$
\STATE Pick $j_k$ uniformly at random in $[n].$
\STATE $p_{k,i} = \begin{cases}
p_{k-1,i}, &  i \neq j_k \\
p_{k-1,i} - a_k\vb_{j_k}^T \bar{\vx}_{k-1}, & i = j_k
\end{cases}$,  \quad\quad\,   $r_{k,i} = \begin{cases}
r_{k-1,i}, &  i \neq j_k \\
r_{k-1,i} + a_k , & i = j_k
\end{cases}$.
\STATE $ y_{k,i} = 
\begin{cases}
y_{k-1,i}, & i \neq j_k \\
\prox_{\frac{1}{n} r_{k,j_k} g^*_{j_k}}(y_{0,j_k} -\frac{1}{n} p_{k,j_k}), &  i = j_k\\
\end{cases} $.
\STATE $\vq_k = \vq_{k-1} + a_k (\vz_{k-1} + (y_{k, j_k} - y_{k-1, j_k})\vb_{j_k}).$
\STATE $\vx_k = \prox_{\frac{1}{n}A_k\ell}(\vx_0 - \frac{1}{n}\vq_k).$ 
\STATE $\vz_k = \vz_{k-1} + \frac{1}{n}(y_{k, j_k} - y_{k-1, j_k})\vb_{j_k}.$

\STATE $a_{k+1} = \min \Big( \big(1+\frac{1}{n-1} \big) a_{k}, \frac{\sqrt{n(n+\sigma A_{k})}}{2R'} \Big), A_{k+1} = A_k + a_{k+1}$.
\ENDFOR
\STATE  \textbf{return } $\vx_K$ or $\tilde{\vx}_K := \frac{1}{A_K}\sum_{k=1}^K a_k \vx_k$.
\end{algorithmic}
\end{algorithm}

We discuss here an equivalent form of Algorithm~\ref{alg:vrpda} for which the aspects needed for efficient implementation are treated more transparently. To implement Algorithm~\ref{alg:vrpda}, we use the proximal operator $\prox_{\tau f}(\vx_0) = \argmin_{\vx}\{\tau f(\vx) + \frac{1}{2}\|\vx - \vx_0\|^2\}$ for a scalar $\tau >0$, a convex function $f$ and a given $\vx_.$
This implementation version (shown here as Algorithm~\ref{alg:vrpda-implementable}), 
maintains several additional vectors that are needed to keep track of coefficients in the ``$\argmin$" Steps 11 and 12 of Algorithm~\ref{alg:vrpda}.
In particular, we maintain a vector $\vp_k \in \R^n$ that contains the coefficients of the linear term in $\vy$ in the function $\psi_k(\cdot)$, a vector $\vr_k \in \R^n$ that contains coefficients of $g^*_{j}(\cdot)$  in $\psi_k(\cdot)$, and a vector $\vq_k \in \R^d$ that is the coefficient of the linear term in  $\vx$ in the function $\phi_k(n\cdot)$.
Each of these vectors generally must be stored in full, and all are initialized in Step 7 of Algorithm~\ref{alg:vrpda-implementable}.
In Step 11 of Algorithm~\ref{alg:vrpda-implementable}, only one component---the $j_k$ component---of $\vp_k$ and $\vr_k$ needs to be updated. 
For $\vr_k$, the cost of update is one scalar addition, whereas for $\vp_k$ it requires computation of the inner product $\vb_{j_k}^T \bar{\vx}_{k-1}$, which costs $O(d)$ scalar operations if $\vb_{j_k}$ is dense but potentially much less for sparse $\mB$. 
The update of $\vq_k$ (Step 14 of Algorithm~\ref{alg:vrpda-implementable}) requires addition of scalar multiples of two vectors ($\vz_{k-1}$ and $\vb_{j_k}$), also at a cost of $O(d)$ operations in general. (Note that for the latter operation, savings are possible if $\mB$ is sparse, but since $\vz_{k-1}$ is generally dense, the overall cost will still be $O(d)$.)
In discussing the original Algorithm~\ref{alg:vrpda}, we noted that the $\argmin$ operation over $\vy$ in Step 11 resulted in only a single component (component $j_k$) being different between $\vy_{k-1}$ and $\vy_k$. 
We make this fact explicit in Step 12 of Algorithm~\ref{alg:vrpda-implementable}, where we show precisely the prox-operation that needs to be performed to recover the scalar $y_{k,j_k}$.

To summarize, each iteration of Algorithm~\ref{alg:vrpda-implementable} requires several vector operations with the (possibly sparse) vector $\vb_{j_k} \in \R^d$, several other vector operations of cost  $O(d)$, a prox operation involving $\ell(\vx)$, and a scalar prox operation involving $g_{j_k}^*$. There are no operations of cost $O(n)$.

Initialization of Algorithm~\ref{alg:vrpda-implementable} involves significant costs, including one matrix-vector product each involving $\mB$ and $\mB^T$, one prox operation involving $\ell(\cdot)$, a prox operation involving $g^*(\cdot)$ (which can be implemented as $n$ scalar prox operations involving $g_1^*, g_2^*, \dotsc, g_n^*$ in turn), and several vector operations of cost $O(d+n)$. However, this cost is absorbed by the overall (amortized) computational cost of the algorithm, which adds to $O(nd \log(\min\{n, 1/\epsilon\}) + \frac{d}{\epsilon})$ for the general convex case and   $O(nd \log(\min\{n, 1/\epsilon\}) + \frac{d}{\sqrt{\sigma\epsilon}})$ for the strongly convex case, as stated in Theorem~\ref{thm:vr} and in Table~\ref{tb:result}.

\end{document}